\theoremstyle{plain}
\newtheorem{theorem}{Theorem}[section]
\theoremstyle{plain}
\newtheorem{lemma}{Lemma}[section]
\newtheorem{proposition}{Proposition}[section]
\newtheorem{definition}{Definition}[section]
\newtheorem{remark}{Remark}[section]
\renewcommand{\[}{\left[}
\renewcommand{\]}{\right]}
\newcommand{\eps}{\varepsilon}
\newcommand{\To}{\longrightarrow}
\newcommand{\be} {\begin{equation}}
\newcommand{\ee} {\end{equation}}
\newcommand{\bea} {\begin{eqnarray}}
\newcommand{\eea} {\end{eqnarray}}
\newcommand{\Bea} {\begin{eqnarray*}}
	\newcommand{\Eea} {\end{eqnarray*}}
\newcommand{\al} {\alpha}
\newcommand{\ba} {\beta}
\newcommand{\de} {\delta}
\newcommand{\ga} {\gamma}
\newcommand{\Ga} {\Gamma}
\newcommand{\om} {\omega}
\newcommand{\De} {\Delta}
\newcommand{\la} {\lambda}
\newcommand{\no} {\nonumber}
\newcommand{\noi} {\noindent}
\newcommand{\lab} {\label}
\newcommand{\va} {\varphi}
\newcommand{\R}{\mathbb R}
\newcommand{\N}{\mathbb N}
\newcommand{\Rn}{\mathbb R^N}
\newcommand{\deb}{\rightharpoonup}
\newcommand{\Hs}{\dot{H}^s(\mathbb{R}^{N})}
\newcommand{\hhms}{(\dot{H}^{s}\times\dot{H}^s)'}
\newcommand{\hms}{(\dot{H}^{s})'}
\newcommand{\phs}{\dot{H}^s\times\dot{H}^s}
\newcommand{\authorfootnotes}{\renewcommand\thefootnote{\@fnsymbol\c@footnote}}%
\def\N{{I\!\!N}}
\numberwithin{equation}{section} \allowdisplaybreaks
\begin{document}
        \title{Fractional elliptic systems with critical nonlinearities}

\date{}

\author[M. Bhakta]{Mousomi Bhakta\textsuperscript{1}}
\address{\textsuperscript{1}Department of Mathematics, Indian Institute of Science Education and Research, Dr. Homi Bhaba Road, Pune-411008, India}
\email{mousomi@iiserpune.ac.in}

\author[S. Chakraborty]{Souptik Chakraborty\textsuperscript{1}}
\email{souptik.chakraborty@students.iiserpune.ac.in}

\author[O.H. Miyagaki]{Olimpio H. Miyagaki \textsuperscript{2}}
\address{\textsuperscript{2}Departamento de Matem\'atica, Universidade Federal de S\~ao Carlos, S\~ao Carlos-SP, 13565-905, Brazil}
\email{ohmiyagaki@gmail.com}

\author[P. Pucci]{Patrizia Pucci\textsuperscript{3}}
\address{\textsuperscript{3}Dipartimento di Matematica e Informatica, Universit\`a degli Studi di Perugia --
Via Vanvitelli 1, I-06123 Perugia, Italy}
\email{patrizia.pucci@unipg.it}

\keywords{Nonlocal system, uniqueness, ground state solution, Palais-Smale decomposition, energy estimate, positive solutions, min-max method.}

\begin{abstract}
This paper deals with existence, uniqueness/multiplicity of positive solutions to the following nonlocal system of equations:
\begin{equation}
	\tag{$\mathcal S$}\label{MAT1}
	\left\{\begin{aligned}
		&(-\Delta)^s u = \frac{\alpha}{2_s^*}|u|^{\alpha-2}u|v|^{\beta}+f(x)\;\;\text{in}\;\mathbb{R}^{N},\\
		&(-\Delta)^s v = \frac{\beta}{2_s^*}|v|^{\beta-2}v|u|^{\alpha}+g(x)\;\;\text{in}\;\mathbb{R}^{N},\\
		& u, \, v >0\,  \mbox{ in }\,\mathbb{R}^{N},
	\end{aligned}
	\right.
\end{equation}
where $N>2s$, $\alpha,\,\beta>1$, $\alpha+\beta=2N/(N-2s)$, and $f,\, g$ are nonnegative functionals in the dual space of $\dot{H}^s(\mathbb{R}^{N})$, i.e., $\prescript{}{(\dot{H}^{s})'}{\langle}f,u{\rangle}_{\dot{H}^s}\geq 0$, whenever $u$ is a nonnegative function  in $\dot{H}^s(\mathbb{R}^{N})$. When $f=0=g$, we show that the ground state solution of \eqref{MAT1} is {\it unique}. On the other hand, when $f$ and $g$ are nontrivial nonnegative functionals with ker$(f)$=ker$(g)$, then we establish the existence of at least two different positive solutions of \eqref{MAT1} provided that $\|f\|_{(\dot{H}^s)'}$ and $\|g\|_{(\dot{H}^s)'}$  are small enough. Moreover, we also provide a global compactness result, which gives a complete description of the Palais-Smale sequences of the above system.
 \medskip

\noindent
\emph{\bf 2010 MSC:} 35R11,  35A15, 35B33, 35J60
\end{abstract}

\maketitle

\section{Introduction}
	In this article we study existence,  uniqueness/multiplicity of positive
solutions to the following fractional nonhomogeneous elliptic system in $\Rn$
\begin{equation}
	\tag{$\mathcal S$}\label{MAT1}
	\left\{\begin{aligned}
		&(-\Delta)^s u = \frac{\al}{2_s^*}|u|^{\al-2}u|v|^{\ba}+f(x)\;\;\text{in}\;\mathbb{R}^{N},\\
		&(-\Delta)^s v = \frac{\ba}{2_s^*}|v|^{\ba-2}v|u|^{\al}+g(x)\;\;\text{in}\;\mathbb{R}^{N},\\
		& u, \, v >0\,  \mbox{ in }\,\mathbb{R}^{N},
	\end{aligned}
	\right.
\end{equation}
where $N>2s$, $\al,\,\ba>1$, $\al+\ba = 2^*_s:=2N/(N-2s)$, and $f,\, g$ are  nonnegative functionals in the dual space of $\dot{H}^s(\Rn)$. Here $(-\De)^s$ denotes the  fractional Laplace operator which can be defined for the Schwartz class functions $\mathcal{S}(\Rn)$  as follows
\begin{equation} \label{De-u}
  \left(-\Delta\right)^su(x): = c_{N,s}
\, \text{P.V.} \int_{\Rn}\frac{u(x)-u(y)}{|x-y|^{N+2s}} \, {\rm d}y, \quad c_{N,s}= \frac{4^s\Ga(N/2+ s)}{\pi^{N/2}|\Ga(-s)|}.
\end{equation}
Let
$$\dot{H}^s(R^{N}): =\bigg\{u\in L^{2^*_s}(\R^N) \; : \; \iint_{\mathbb{R}^{2N}}\frac{|u(x)-u(y)|^2}{|x-y|^{N+2s}}\,{\rm d}x\,{\rm d}y<\infty\bigg\},$$
be the homogeneous fractional Sobolev space, endowed 	with the inner product
$\langle\cdot,\cdot\rangle_{\dot{H}^s}$ and corresponding Gagliardo norm
$$\|u\|_{\dot{H}^{s}}:=\left( \iint_{\mathbb{R}^{2N}} \frac{|u(x)-u(y)|^2}{|x-y|^{N+2s}}\,{\rm d}x\,{\rm d}y\right)^{1/2}.$$
It is well-known that $u\in\dot{H}^s(\Rn)$ implies $u\in L^p_{\text{\scriptsize{\rm loc}}}(\Rn)$ for any $p\in[2,2^*_s]$.

In the vectorial case, as described in \cite{BCP}, the natural solution space for~\eqref{MAT1} is  the Hilbert space $\Hs\times\Hs$, equipped with the inner product
$$\big\langle (u,v), (\phi,\psi)\big\rangle_{\dot{H}^s\times\dot{H}^s}:=\langle u,\phi\rangle_{\dot{H}^s}+\langle v,\psi\rangle_{\dot{H}^s},$$ and the norm
$$\|(u,v)\|_{\dot{H}^s\times\dot{H}^s}:=\big(\|u\|^2_{\dot{H}^s}+\|v\|^2_{\dot{H}^s}\big)^\frac{1}{2}.$$
In general, given any two Banach spaces $X$ and $Y$, the product space $X\times Y$ is endowed with the following product norm
$$\|(x,y)\|_{X\times Y}:=\big(\|x\|_X^2+\|y\|_Y^2\big)^\frac{1}{2}.$$
For instance, $L^p(\Rn)\times L^p(\Rn)$ ($p>1$) is equipped with the product norm
$$\|(u,v)\|_{L^{p}\times L^{p}}:=\big(\|u\|^2_{L^p}+\|v\|^2_{L^p}\big)^\frac{1}{2}.$$

	\begin{definition}
{\rm		A pair $(u,v) \in \dot{H}^s(\Rn)\times\Hs$ is said to be a {\it positive} (weak) {\it solution of} \eqref{MAT1} if $u>0$ and $v>0$ in $\Rn$ and for every $(\phi,\psi)\in\dot{H}^s(\Rn)\times \Hs$ it holds
		\begin{align*}
\big\langle (u,v), (\phi,\psi)\big\rangle_{\dot{H}^s\times\dot{H}^s}&= \frac{\al}{2^*_s}\int_{\R{^N}}|u|^{\al-2}u|v|^{\ba}\phi\,{\rm d}x+\frac{\ba}{2^*_s}\int_{\R{^N}}|v|^{\ba-2}v|u|^{\al}\psi\,{\rm d}x\\
&\qquad\qquad\qquad+ \prescript{}{(\dot{H}^s)'}{\langle}f,\phi{\rangle}_{\dot{H}^s} +\prescript{}{(\dot{H}^s)'}{\langle}g,\psi{\rangle}_{\dot{H}^s},
\end{align*}
		where $\prescript{}{(\dot{H}^s)'}{\langle}\cdot,\cdot{\rangle}_{\dot{H}^s}$ denotes the duality bracket between the dual space $\dot{H}^s(\Rn)'$
of $\dot{H}^s(\Rn)$ and $\dot{H}^s(\Rn)$
itself.}
	\end{definition}

Define
\be\lab{S}
S=S_{\al+\ba}:=\inf_{u\in\dot{H}^s(\Rn)\setminus\{0\}}\frac{\|u\|^2_{\dot{H}^s}}
{\bigg(\displaystyle\int_{\Rn} |u|^{2^*_s}{\rm d}x\bigg)^{2/2^*_s}}\no\ee
and \be\lab{17-7-5}S_{\al,\ba}:=\displaystyle{\inf_{(u,v)\in \dot{H}^s\times\dot{H}^s\setminus\{(0,0)\}} \frac{\|u\|^2_{\dot{H}^s}+ \|v\|^2_{\dot{H}^s}}{\bigg(\displaystyle\int_{\Rn}|u|^{\al}|v|^{\ba}{\rm d}x\bigg)^{2/2^*_s}}}.\ee
In the celebrated paper \cite{CLO},  Chen, Li and Ou   prove that
the best Sobolev constant $S_{\al+\ba}=S$ is achieved by $w$, where $w$ is the unique positive solution (up to  translations and dilations) of
\be\lab{24-9-2}(-\Delta)^sw = w^{2^*_s-1}\;\;\text{in}\;\,\mathbb{R}^{N},\quad
w \in \dot{H}^{s}{(\mathbb{R}^{N})}.\ee

Next, we recall a result from \cite{FMPSZ} (\cite{AMS} in the local case) which states the relation between $S_{\al,\ba}$ and $S_{\al+\ba}$.

\begin{lemma}\lab{l:S}\cite[Lemma 5.1]{FMPSZ} In all cases $\al>1$, $\ba>1$, with $\al+\ba\le2^*_s$, it results
$$S_{\al,\ba}=\bigg[\left(\frac{\al}{\ba}\right)^\frac{\ba}{\al+\ba}
+\left(\frac{\al}{\ba}\right)^\frac{-\al}{\al+\ba} \bigg]S_{\al+\ba}.$$
Moreover, if $w$ achieves $S_{\al+\ba}$ then $(Bw, Cw)$ achieves $S_{\al,\ba}$ for all positive constants $B$ and $C$ such that $B/C=\sqrt{\al/\ba}$.
\end{lemma}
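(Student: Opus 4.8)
The plan is to reduce the minimization of the vectorial quotient defining $S_{\al,\ba}$ to the scalar quotient defining $S_{\al+\ba}=S$ by exploiting the scaling structure of the functional. First I would observe that both the numerator $\|u\|^2_{\dot H^s}+\|v\|^2_{\dot H^s}$ and the denominator $\big(\int_{\Rn}|u|^\al|v|^\ba\,{\rm d}x\big)^{2/2^*_s}$ are $2$-homogeneous under the \emph{simultaneous} dilation $(u,v)\mapsto(tu,tv)$ (using $\al+\ba=2^*_s$), so the quotient is invariant under this scaling; hence one may first optimize over the ``shape'' of the pair and then over the ratio of the two amplitudes. Concretely, for a fixed scalar function $0\ne w\in\dot H^s(\Rn)$ I would test the quotient on pairs of the form $(Bw,Cw)$ with $B,C>0$: the numerator becomes $(B^2+C^2)\|w\|^2_{\dot H^s}$ and the denominator becomes $(B^\al C^\ba)^{2/2^*_s}\big(\int_{\Rn}|w|^{2^*_s}\,{\rm d}x\big)^{2/2^*_s}$, so the quotient equals
\[
\frac{B^2+C^2}{(B^\al C^\ba)^{2/2^*_s}}\cdot\frac{\|w\|^2_{\dot H^s}}{\big(\int_{\Rn}|w|^{2^*_s}\,{\rm d}x\big)^{2/2^*_s}}.
\]
Minimizing the purely algebraic factor $g(B,C):=(B^2+C^2)\,(B^\al C^\ba)^{-2/2^*_s}$ over $B,C>0$ is an elementary calculus exercise: by the scaling $g(tB,tC)=g(B,C)$ one may set $C=1$ and minimize $h(B)=(B^2+1)B^{-2\al/2^*_s}$ in one variable, obtaining the critical point $B^2=\al/\ba$, i.e.\ $B/C=\sqrt{\al/\ba}$, and a direct substitution yields
\[
\min_{B,C>0} g(B,C)=\Big(\tfrac{\al}{\ba}\Big)^{\ba/(\al+\ba)}+\Big(\tfrac{\al}{\ba}\Big)^{-\al/(\al+\ba)}.
\]
Taking the infimum over $w$ of the remaining scalar factor gives exactly $S_{\al+\ba}$, so this construction shows the inequality $S_{\al,\ba}\le\big[(\al/\ba)^{\ba/(\al+\ba)}+(\al/\ba)^{-\al/(\al+\ba)}\big]S_{\al+\ba}$, and moreover that if $w$ achieves $S_{\al+\ba}$ then $(Bw,Cw)$ with $B/C=\sqrt{\al/\ba}$ achieves this bound.

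For the reverse inequality I would take an arbitrary admissible pair $(u,v)$ with $(u,v)\ne(0,0)$ and reduce to single functions. Writing $t=\|u\|_{\dot H^s}$, $\tau=\|v\|_{\dot H^s}$, the Sobolev inequality $\|u\|_{L^{2^*_s}}^2\le S^{-1}\|u\|^2_{\dot H^s}$ gives $\int_{\Rn}|u|^\al|v|^\ba\,{\rm d}x\le\|u\|_{L^{2^*_s}}^\al\|v\|_{L^{2^*_s}}^\ba\le S^{-2^*_s/2}t^\al\tau^\ba$ by Hölder with exponents $2^*_s/\al$ and $2^*_s/\ba$ (here $\al+\ba=2^*_s$ is exactly what makes these conjugate). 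Therefore the quotient for $(u,v)$ is bounded below by $(t^2+\tau^2)(S^{-2^*_s/2}t^\al\tau^\ba)^{-2/2^*_s}=S\cdot g(t,\tau)\ge S\cdot\min_{B,C>0}g(B,C)$, which is precisely the claimed value. Combining the two inequalities gives the identity for $S_{\al,\ba}$.

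The main technical point to watch is the equality-discussion in the ``moreover'' part: one needs the chain of inequalities (Sobolev, Hölder, and the algebraic minimization) to be simultaneously saturated. I would note that if $w$ achieves $S_{\al+\ba}$ then testing with $(Bw,Cw)$, $B/C=\sqrt{\al/\ba}$, the Sobolev inequality is an equality for $w$ by assumption, Hölder is an equality because $|Bw|^{2^*_s}$ and $|Cw|^{2^*_s}$ are proportional, and the algebraic factor is at its minimum by the choice of ratio; hence the value $\big[(\al/\ba)^{\ba/(\al+\ba)}+(\al/\ba)^{-\al/(\al+\ba)}\big]S_{\al+\ba}$ is attained, so it must equal $S_{\al,\ba}$ and $(Bw,Cw)$ is a minimizer. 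I expect essentially no serious obstacle here since this is Lemma~5.1 of \cite{FMPSZ}; the only mild care needed is bookkeeping of exponents to confirm that the Hölder pairing is licit exactly under the hypothesis $\al+\ba\le2^*_s$ (with the borderline $\al+\ba=2^*_s$ being the relevant case for the system \eqref{MAT1}).
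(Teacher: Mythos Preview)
Your argument is correct and is precisely the standard one. Note, however, that the paper does \emph{not} supply its own proof of this lemma: it is quoted verbatim from \cite[Lemma~5.1]{FMPSZ} (with the local antecedent in \cite{AMS}), so there is no ``paper's proof'' to compare against here. Your scheme---test pairs $(Bw,Cw)$ for the upper bound, H\"older with exponents $2^*_s/\al$, $2^*_s/\ba$ followed by Sobolev for the lower bound, and the one-variable minimization of $g(B,C)=(B^2+C^2)(B^\al C^\ba)^{-2/2^*_s}$---is exactly the argument in those references.

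For context, the paper does prove the closely related Lemma~\ref{l:gss} (for $S_{\mu,\al,\ba}$), and there a minor variant is used: rather than applying H\"older directly, they take a minimizing sequence $(u_n,v_n)$, rescale via $z_n=v_n/\tau_n$ so that $\|z_n\|_{L^{2^*_s}}=\|u_n\|_{L^{2^*_s}}$, and then invoke Young's inequality to bound $\int|u_n|^\al|z_n|^\ba\le\int|u_n|^{2^*_s}$. This yields the same lower bound as your H\"older step with essentially the same effort; neither approach has an advantage over the other.
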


The scalar version of \eqref{MAT1} has been considered by Bhakta and Pucci  in \cite{BP}, where they prove  existence of at least two positive solutions.
This class of problems in the scalar and local cases, involving
Sobolev critical exponents was treated in the pioneering paper \cite{BNh}. Then existence was extended in \cite{Tarantello} to multiplicity results. These kind of problems were  studied in several directions. Let us mention  \cite{CaoZou, Castro, doO, Squassina, Wan} for more general perturbations and \cite{Clapp} for existence of sign changing solutions. Versions for systems were extended, for instance,  in \cite{Mohamed, Han, HSZ, Zhang} and in the references therein.

Elliptic systems arise in biological applications (e.g. population dynamics) or physical applications (e.g. models of a nuclear reactor) and have been drawn a lot of attention (see \cite{AMS, CFMT, M, RZ} and references therein). For systems in bounded domains with nonhomogeneous terms we refer to~\cite{BR}. Problems involving the fractional Laplace operator
appear in several areas such as phase transitions, flames propagation, chemical reaction in liquids, population dynamics, finance, etc.,  see  for e.g. \cite{Caffarelli,Nezza}.

In the nonlocal case, there are not so many papers, in which weakly coupled systems of equations have been studied. We refer to~\cite{BN, CS, CMSY, FMPSZ, GMS, HSZ}, where Dirichlet systems of equations in bounded domains have been treated. For the nonlocal systems of equations in the entire space $\Rn$, we cite \cite{BCP, FPS, FPZ}
and the references therein. In the very recent work \cite{BCP}, the first, second and fourth authors of this current paper have proved existence of one solution to \eqref{MAT1} when $f$ and $g$ are nontrivial but $\|f\|_{(\dot{H}^s)'}$ and $\|g\|_{(\dot{H}^s)'}$ are small enough. To the best of our knowledge, so far there have been no papers in the literature, where uniqueness/multiplicity of positive solutions  have  been established for \eqref{MAT1}, with the fractional Laplacian and the critical exponents in $\Rn$. The main results in the paper are new even in the local case $s=1$.

\medskip

 First of all, we say that  a pair $(u,v)\in \dot{H}^s(\Rn)\times\Hs$ is a {\it ground state solution} or {\it least energy solution} for \eqref{MAT1}, with $f=0=g$, if $(u,v)$ is a minimizer of $S_{\alpha,\beta}.$

 \medskip

 Lemma~\ref{l:S} poses a natural question: are all the ground state solutions of \eqref{MAT1}, with $f=0=g$,  of the form $(Bw, Cw)$, where $w$ is the unique positive solution of \eqref{24-9-2}?

We answer this question affirmatively in our first main theorem which is stated as below.

 \medskip

\begin{theorem}[Uniqueness of ground state for homogeneous system]\lab{th:uni}
 Let $(u_0,v_0)$ be a  minimizer of  $S_{\alpha,\beta}.$  Then  there exist $\tau, B>0$  such that
	$$(u_0,v_0)=(Bw,Cw),\ \mbox{with }\  C=B\tau,  \quad \tau=\sqrt\frac{\ba}{\al},$$
	where $w$ is the unique positive solution of \eqref{24-9-2}.
	\end{theorem}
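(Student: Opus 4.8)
The plan is to show that \emph{any} minimizer turns a short chain of three inequalities --- Hölder's inequality, the fractional Sobolev inequality defining $S$, and an elementary two--variable AM--GM inequality --- into equalities, and then to read the structure of $(u_0,v_0)$ off from the three equality cases. First I would reduce to $u_0,v_0\ge 0$: replacing $u_0,v_0$ by $|u_0|,|v_0|$ does not increase $\|\cdot\|_{\dot{H}^s}$ and leaves $\int_{\Rn}|u_0|^\alpha|v_0|^\beta\,dx$ unchanged, so a minimizer cannot change sign, and up to replacing $u_0$ by $-u_0$ and $v_0$ by $-v_0$ we may assume $u_0,v_0\ge 0$; moreover $u_0\nequiv0$ and $v_0\nequiv0$, since otherwise the quotient in \eqref{17-7-5} is $+\infty$.

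For the chain, since $\alpha+\beta=2^*_s$ the exponents $2^*_s/\alpha$ and $2^*_s/\beta$ are conjugate, so Hölder's inequality gives $\int_{\Rn}u_0^\alpha v_0^\beta\,dx\le\big(\int_{\Rn}u_0^{2^*_s}dx\big)^{\alpha/2^*_s}\big(\int_{\Rn}v_0^{2^*_s}dx\big)^{\beta/2^*_s}$. Combining this with the Sobolev bounds $\int_{\Rn}u_0^{2^*_s}dx\le(\|u_0\|_{\dot{H}^s}^2/S)^{2^*_s/2}$ and $\int_{\Rn}v_0^{2^*_s}dx\le(\|v_0\|_{\dot{H}^s}^2/S)^{2^*_s/2}$, and writing $X=\|u_0\|_{\dot{H}^s}^2$, $Y=\|v_0\|_{\dot{H}^s}^2$, $a=\alpha/2^*_s$, $b=\beta/2^*_s$ (so $a+b=1$), I obtain
$$S_{\alpha,\beta}=\frac{X+Y}{\big(\int_{\Rn}u_0^\alpha v_0^\beta\,dx\big)^{2/2^*_s}}\ \ge\ S\,\frac{X+Y}{X^{a}Y^{b}}\ \ge\ S\,a^{-a}b^{-b},$$
where the last step is the weighted AM--GM inequality $X+Y\ge a^{-a}b^{-b}X^aY^b$, with equality iff $X/Y=a/b=\alpha/\beta$. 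Since $a^{-a}b^{-b}=(\alpha+\beta)\,\alpha^{-\alpha/(\alpha+\beta)}\beta^{-\beta/(\alpha+\beta)}$, which by Lemma~\ref{l:S} equals $S_{\alpha,\beta}/S$, every inequality in the chain is in fact an equality.

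It then remains to exploit the three equality cases. Equality in Hölder's inequality forces $u_0^{2^*_s}$ and $v_0^{2^*_s}$ to be proportional, and since $u_0,v_0\ge 0$ with $u_0,v_0\nequiv0$ this means $v_0=\mu u_0$ a.e. for some $\mu>0$; equality in the AM--GM step then gives $X/Y=\alpha/\beta$, i.e. $\mu^{-2}=\alpha/\beta$, so $\mu=\tau:=\sqrt{\beta/\alpha}$; and equality in the Sobolev inequality forces $u_0$ to be an extremal of $S$, hence by the Chen--Li--Ou classification \cite{CLO} one has $u_0=Bw$ for some $B>0$, where $w$ is the unique (up to translations and dilations) positive solution of \eqref{24-9-2}. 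This yields $(u_0,v_0)=(Bw,Cw)$ with $C=B\tau$ and $\tau=\sqrt{\beta/\alpha}$, as claimed. The only genuinely nontrivial input is this last equality case --- the characterization of the extremals of the fractional Sobolev inequality --- which is the deep ingredient; everything else is elementary. The two minor points to watch are that the reduction in the first step really rules out sign changes (the equality $\||u_0|\|_{\dot{H}^s}=\|u_0\|_{\dot{H}^s}$ forces $u_0$ to have constant sign, and likewise for $v_0$) and that the proportionality constant $\mu$ is strictly positive precisely because neither $u_0$ nor $v_0$ vanishes identically.
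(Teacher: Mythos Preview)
Your argument is correct, and it is genuinely different from---and considerably shorter than---the paper's own proof. You run the inequality chain H\"older $+$ Sobolev $+$ weighted AM--GM, observe that Lemma~\ref{l:S} says the chain collapses to an equality for any minimizer, and then read off the structure of $(u_0,v_0)$ from the three equality cases; the only deep input is the Chen--Li--Ou characterization of Sobolev extremals, which the paper also uses. The paper instead follows the Chen--Zou / Peng--Peng--Wang strategy: it introduces two one--parameter auxiliary systems (one perturbing the coupling constant, one adding a decoupled critical term), applies the implicit function theorem to the associated min--max levels $B(\mu)$ and $\tilde B(\mu)$, and differentiates these levels at $\mu=1$ and $\mu=0$ to extract the identities $\int |u_0|^\alpha|v_0|^\beta = r^\alpha t^\beta\int w^{2^*_s}$ and $\int |u_0|^{2^*_s} = \tilde r^{2^*_s}\int w^{2^*_s}$, from which it concludes $u_0=rw$ and then $v_0=tw$. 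Your approach is the natural one for this particular problem, since $S_{\alpha,\beta}$ is \emph{built} from the three inequalities you invert; the paper's technique is heavier machinery designed for coupled systems where no such clean factorization is available, and here it is somewhat roundabout. One small point: you should be explicit that ``$u_0$ is a Sobolev extremal, hence $u_0=Bw$ by \cite{CLO}'' uses that nonnegative extremals of $S$ are, after normalization, positive solutions of \eqref{24-9-2}; this is standard but worth a sentence, since \cite{CLO} is stated for the equation rather than the variational inequality.
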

	
The above result partially extends the uniqueness theorem due to  Chen, Li and Ou \cite{CLO} from the scalar case \eqref{24-9-2} to the system  \eqref{MAT1} with $f=0=g$.  Theorem~\ref{th:uni} proves the uniqueness of ground state solution of the system~\eqref{MAT1} when $f=0=g$ and also generalizes \cite[Lemma 5.1]{FMPSZ}, where as in \cite{CLO} uniqueness has been established among all positive solutions of \eqref{24-9-2}.	

Our next main result is the multiplicity of solutions for the nonhomogeneous system \eqref{MAT1}.
	
	\begin{theorem}[Multiplicity for nonhomogeneous system]\lab{th:ex-f}
Assume that  $f, g$ are  nontrivial nonneagtive functionals in the dual space of $\Hs$ with $ker(f)=ker(g)$ and
$$ \max\{\|f\|_{(\dot{H}^s)'}, \|g\|_{(\dot{H}^s)'}\} <C_0S_{\al,\ba}^{\frac{N}{4s}}, \quad\text{where}\quad C_0:=\bigg(\frac{4s}{N+2s}\bigg)(2^*_s-1)^{-\frac{N-2s}{4s}},$$
then \eqref{MAT1} admits at least two positive solutions.

Furthermore, if $f\equiv g$, then the solution $( u, v)$ of \eqref{MAT1} has the property that
$ u\not\equiv v$, whenever $\al\neq\ba$. Finally, if $\al=\ba$ but $f\not\equiv g$, then $u\not\equiv  v$.
\end{theorem}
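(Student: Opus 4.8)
The plan is to adapt to the fractional vectorial setting the classical variational scheme of Tarantello~\cite{Tarantello} (employed in the scalar nonlocal case in~\cite{BP}), combined with Theorem~\ref{th:uni} and the global compactness result of this paper. Introduce the $C^1$ functional $\Phi\colon\Hs\times\Hs\to\R$,
\begin{equation*}
\Phi(u,v)=\frac12\|(u,v)\|^2_{\dot H^s\times\dot H^s}-\frac{1}{2^*_s}\int_{\Rn}(u_+)^{\al}(v_+)^{\ba}\,dx-\langle f,u\rangle-\langle g,v\rangle,
\end{equation*}
where $\langle\cdot,\cdot\rangle$ is the $(\dot H^s)'$--$\dot H^s$ duality pairing and $u_+=\max\{u,0\}$. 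Since $f,g\ge0$, testing the Euler--Lagrange equations of $\Phi$ against the negative parts and using $\langle u,u_-\rangle_{\dot H^s}\le-\|u_-\|^2_{\dot H^s}$ forces $u,v\ge0$ at any critical point; as $f,g\nequiv0$, neither component can vanish identically, and the strong maximum principle for $(-\De)^s$ then yields $u,v>0$. Hence every critical point of $\Phi$ is a positive solution of \eqref{MAT1}, and it suffices to produce two distinct ones. The recurrent ingredients will be the vectorial Sobolev inequality $\int_{\Rn}(u_+)^{\al}(v_+)^{\ba}\le S_{\al,\ba}^{-2^*_s/2}\|(u,v)\|^{2^*_s}$ from \eqref{17-7-5} and the bound $\langle f,u\rangle+\langle g,v\rangle\le\|(f,g)\|_{(\phs)'}\|(u,v)\|$; the smallness assumption $\max\{\|f\|_{(\dot H^s)'},\|g\|_{(\dot H^s)'}\}<C_0S_{\al,\ba}^{N/(4s)}$, with the explicit constant $C_0$, is precisely what makes the estimates below close, the hypothesis $\ker(f)=\ker(g)$ (equivalently, $g$ a positive multiple of $f$) being used so that the fibering analysis is consistent for both components.

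\emph{First solution.} Consider the Nehari manifold $\mathcal N=\{(u,v)\neq(0,0):\langle\Phi'(u,v),(u,v)\rangle=0\}$ and, along each ray, the fibering map $t\mapsto\Phi(tu,tv)=At^2-Bt^{2^*_s}-Ct$, with $A,B>0$ and $C=\langle f,u\rangle+\langle g,v\rangle\ge0$. The smallness condition guarantees that every such map with $C>0$ has exactly two positive critical points — a local minimum and a local maximum — so that $\mathcal N=\mathcal N^+\cup\mathcal N^-$ with empty degenerate part and $\mathcal N$ is a natural constraint; on $\mathcal N^+$ one has $\Phi<0$, hence $c_1:=\inf_{\mathcal N}\Phi<0$. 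Since every solution of \eqref{MAT1} lies on $\mathcal N$, all solutions have energy $\ge c_1$. Recalling that the least energy of the homogeneous system ($f=g=0$) equals $\frac sN S_{\al,\ba}^{N/(2s)}$ and is attained exactly by the bubbles $(Bw,Cw)$ with $B/C=\sqrt{\al/\ba}$ (Lemma~\ref{l:S} and Theorem~\ref{th:uni}), the first level at which a Palais--Smale sequence of $\Phi$ can lose compactness is $c_1+\frac sN S_{\al,\ba}^{N/(2s)}$, strictly above $c_1$. Thus Ekeland's variational principle on $\mathcal N$ (a natural constraint) yields a $(PS)_{c_1}$ sequence for $\Phi$ which converges strongly to a minimizer $(u_1,v_1)$: a first positive solution, with $\Phi(u_1,v_1)=c_1<0$.

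\emph{Second solution.} Define the mountain--pass level
\begin{equation*}
c_2:=\inf_{\eta\in\Gamma}\ \max_{t\in[0,1]}\Phi(\eta(t)),\qquad\Gamma=\bigl\{\eta\in C([0,1];\Hs\times\Hs):\eta(0)=0,\ \Phi(\eta(1))<0\bigr\};
\end{equation*}
the geometry forced by the smallness of $f,g$ gives $c_2\ge A>0>c_1$. \textbf{The main obstacle} is the strict energy estimate
\begin{equation*}
c_2<c_1+\frac sN S_{\al,\ba}^{N/(2s)} .
\end{equation*}
One proves it by estimating $\sup_{t\ge0}\Phi\bigl((u_1,v_1)+t(Bw_{\eps},Cw_{\eps})\bigr)$ as $\eps\to0$, where $w_\eps$ is a dilation of the Aubin--Talenti profile $w$ of \eqref{24-9-2} and $B/C=\sqrt{\al/\ba}$, so that $(Bw_\eps,Cw_\eps)$ realizes $S_{\al,\ba}$: the leading terms reproduce $c_1+\frac sN S_{\al,\ba}^{N/(2s)}$; the linear perturbation contributes only $o(1)$ since $w_\eps\rightharpoonup0$ in $\dot H^s$ and $f,g\in(\dot H^s)'$; and the cross terms, controlled via the decay of $w$ and the positivity $u_1,v_1>0$, are strictly negative and dominate the remaining errors, exactly as in~\cite{Tarantello,BP}. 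Granting this estimate, the global compactness (Palais--Smale decomposition) result of this paper shows that any $(PS)_{c_2}$ sequence splits, up to a subsequence, into a solution of \eqref{MAT1} plus finitely many bubbles $(Bw_{\eps_j},Cw_{\eps_j})$, with energies adding; since every solution has energy $\ge c_1$ and $c_2<c_1+\frac sN S_{\al,\ba}^{N/(2s)}$, no bubble can occur, the sequence converges strongly, and the mountain--pass theorem provides a critical point $(u_2,v_2)$ with $\Phi(u_2,v_2)=c_2>0$. Since $c_2>0>c_1=\Phi(u_1,v_1)$, the solutions $(u_1,v_1)$ and $(u_2,v_2)$ are distinct, and both are positive by the opening paragraph.

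\emph{Comparison of the components.} Finally, suppose $(u,v)$ is a positive solution of \eqref{MAT1} with $u\equiv v$. Subtracting the two equations of \eqref{MAT1} and using $\al+\ba=2^*_s$ gives, in $\Rn$,
\begin{equation*}
\frac{\al-\ba}{2^*_s}\,u^{2^*_s-1}=g-f .
\end{equation*}
If $f\equiv g$, the right-hand side vanishes, so $(\al-\ba)u^{2^*_s-1}\equiv0$; when $\al\neq\ba$ this forces $u\equiv0$, contradicting $u>0$. If $\al=\ba$, the left-hand side vanishes, so $g\equiv f$; hence if instead $f\nequiv g$ we again reach a contradiction. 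In both cases $u\nequiv v$, which proves the last assertions of the theorem.
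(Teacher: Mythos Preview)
Your overall strategy is the same as the paper's: obtain a first positive solution at a negative energy level by minimization, a second one by a min--max argument, and restore compactness through the Palais--Smale decomposition (Proposition~\ref{PSP}). Two points, however, deserve correction, and the implementation differs in places from what the paper actually does.

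\textbf{On the bubbles.} You write that a $(PS)_{c_2}$ sequence ``splits \ldots\ into a solution of \eqref{MAT1} plus finitely many bubbles $(Bw_{\eps_j},Cw_{\eps_j})$''. Proposition~\ref{PSP} does \emph{not} assert this: the bubbles $(\tilde u_i,\tilde v_i)$ are only known to be solutions of the homogeneous system \eqref{PSD2}, and Theorem~\ref{th:uni} classifies only the \emph{ground states} among these. Fortunately your conclusion survives, since any nontrivial solution of \eqref{PSD2} satisfies $\|(\tilde u,\tilde v)\|^2=\int|\tilde u|^\al|\tilde v|^\ba\ge S_{\al,\ba}^{N/(2s)}$ and hence $I_{0,0}(\tilde u,\tilde v)\ge\frac sN S_{\al,\ba}^{N/(2s)}$; but the justification you give is wrong.

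\textbf{On $\ker(f)=\ker(g)$.} This hypothesis is not used ``so that the fibering analysis is consistent''. In the paper it is used (Remark~\ref{SR31}) to exclude semi--trivial critical points: if a critical point had, say, $v\equiv0$, testing with $(0,u)$ gives $\langle g,u\rangle=0$, hence $\langle f,u\rangle=0$ by the kernel assumption, and then testing with $(u,0)$ forces $u\equiv0$.

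\textbf{Comparison with the paper's implementation.} The paper does not use the Nehari decomposition $\mathcal N^{\pm}$; instead it partitions $\phs$ by the sign of $\Psi(u,v)=\|(u,v)\|^2-(2^*_s-1)\int|u|^\al|v|^\ba$ into $\Om_1,\Om,\Om_2$, obtains the first solution by minimizing on $\bar\Om_1$ (Lemma~\ref{SL32}, Proposition~\ref{p:30-7-1}), and runs the min--max over paths from the first solution $(u_0,v_0)$ to a point in $\Om_2$, not from the origin (Proposition~\ref{p:31-7-2}). More importantly, the key energy estimate is \emph{not} obtained asymptotically by letting $\eps\to0$ and balancing cross terms against ``remaining errors''; since the problem is posed on $\Rn$, no truncation is needed and the paper proves the \emph{exact} inequality $J_{f,g}(u_0+\tilde u_t,v_0+\tilde v_t)<J_{f,g}(u_0,v_0)+J_{0,0}(\tilde u_t,\tilde v_t)$ for every $t>0$, using that $(u_0,v_0)$ solves \eqref{MAT1} and the elementary pointwise bound $(a+b)^\al(c+d)^\ba>a^\al c^\ba+b^\al d^\ba+\al a^{\al-1}c^\ba b+\ba a^\al c^{\ba-1}d$ for positive reals. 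This is both shorter and more robust than the Br\'ezis--Nirenberg style estimate you sketch. Your mountain--pass-from-the-origin variant can be made to work, but you should be aware that it requires the additional observation $c_1+\frac sN S_{\al,\ba}^{N/(2s)}>0$ for the path through $(u_1,v_1)$ to give the strict upper bound, and you have not addressed this.
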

Theorem~\ref{th:ex-f}  complements
the mentioned work \cite{BCP} on \eqref{MAT1}.


\medskip

The proof of the uniqueness Theorem~\ref{th:uni} is inspired by some arguments made in \cite{CZ-1} and \cite{PPW} (also see \cite{CZ}). The main difference is that in our case the nontrivial solution $(u,v)$ has both components nontrivial, that is $u\neq 0$ and $v \neq 0$, and in the proof it was necessary to deal with a non symmetric system. 

To prove the multiplicity Theorem~\ref{th:ex-f}, the main difficulty is  the lack of compactness of the Sobolev space $\dot{H}^s(\Rn)$  into the Lebesgue space $L^{2^*_s}(\Rn)$.  For this reason   the functional associated to  system \eqref{MAT1}  may fail to satisfy the Palais-Smale condition at some critical levels. To overcome this, it is necessary to look for a nice energy range where the $(PS)$ condition  holds in order to use variational arguments. Classification of $(PS)$ sequences associated with a scalar equation (local/nonlocal) has been done  in many papers, to quote a few, we cite \cite{BP, CF, Lions, PS, PS-2, S}. To the best of our knowledge, the $(PS)$ decomposition associated to systems of equations has not been studied much. We quote the recent work \cite{PPW}, where in the local case the $(PS)$ decomposition was done for systems of equations in bounded domains.

Again to the best of our knowledge, in both the local and nonlocal cases, Proposition~\ref{PSP} (see, Section~3) is the first result where the $(PS)$  decomposition has been established for system of equations in the whole space $\Rn$.
Next, to prove multiplicity of solutions, we decompose the space $\dot{H}^s(\Rn)$ into three disjoint components.  The first solution is constructed using a minimization argument in one of the components. Another solution is obtained by combining the Ekeland variational  principle with a careful analysis of the critical levels by using the homogeneous unique solution with some estimates in a slightly larger Morrey space.

The paper has been organized as follows. In Section 2, we prove the uniqueness for the ground state solution of the homogeneous system, namely Theorem~\ref{th:uni}. Section 3 deals with the Palais-Smale decomposition associated with the functional of \eqref{MAT1}.  In Section 4, we prove Theorem~\ref{th:ex-f}. In the Appendix we discuss a few elementary embeddings of the product of Morrey spaces.

\bigskip

{\bf Notation:} $u_+:=\max\{u,0\},\;u_-:=-\min\{u,0\}$. By ker$(f)$ we denote the kernel of $f$.
\bigskip

\begin{remark} {\rm
Adapting the arguments in the proof of Theorem~\ref{th:uni} and Theorem~\ref{th:ex-f}, the results of uniqueness and multiplicity can be obtained for the following system of equations:
\begin{itemize}
\item [a)] \begin{equation}
\label{sys-Q'}
\left\{\begin{aligned}
		&(-\Delta)^s u + u= \frac{\al}{\al+\ba}|u|^{\al-2}u|v|^{\ba}+f(x)\;\;\text{in}\,\, \mathbb{R}^{N},\\
		&(-\Delta)^s v + v= \frac{\ba}{\al+\ba}|v|^{\ba-2}v|u|^{\al}+g(x)\;\;\text{in}\,\,  \mathbb{R}^{N},\\
              & u, \, v >0\,  \mbox{ in }\,\,\mathbb{R}^{N},
		 \end{aligned}
  \right.
\end{equation}
where $N>2s$, $\al,\,\ba>1$ and $\al+\ba<2^*_s$, and $f,\, g$ are nonnegative functionals in the dual space of ${H}^s(\Rn)$ (see \cite{BCP} for existence of solutions). It is known that the scalar equation
\be\lab{29-9-1}
(-\Delta)^s u + u=|u|^{\al+\ba-2}u \quad \mbox{ in }\,\mathbb{R}^{N}
\ee
has a unique ground state solution (see \cite{FLS}). If $\om$ denotes the unique ground state solution of \eqref{29-9-1}, then it can be shown that $(r\om, t \om)$ is a ground state solution of \eqref{sys-Q'} when $f=0=g$ and $r/t=\sqrt{\al/\ba}$. Next, following an argument similar to Theorem~\ref{th:uni}, with obvious modifications, it can be shown that any ground state solution of \eqref{sys-Q'} with $f=0=g$ is of the form $(r\om, t \om)$ where $r/t=\sqrt{\al/\ba}$.
\item [b)]  \begin{equation}
	\left\{\begin{aligned}
		&(-\Delta)^s u = \frac{\al}{2_s^*}a(x)|u|^{\al-2}u|v|^{\ba}+f(x)\;\;\text{in}\,\, \mathbb{R}^{N},\\
		&(-\Delta)^s v = \frac{\ba}{2_s^*}b(x)|v|^{\ba-2}v|u|^{\al}+g(x)\;\;\text{in}\,\, \mathbb{R}^{N},\\
		& u, \, v >0\,  \mbox{ in }\,\mathbb{R}^{N},
	\end{aligned}
	\right.
\end{equation}
where $\al,\,\ba$, $f,\, g$ are as in \eqref{MAT1} and the potentials $a,\, b$ are continuous functions in $\Rn$ with $a, b\geq 1$ and $a(x), \, b(x)\to 1$ as $|x|\to \infty$. See for instance \cite{BP} in the scalar case.

\item [c)]  One can also try to adopt the methodology of this paper in order to study the system of equations involving  the  Hardy operator  i.e., if  $(-\De)^s$ is replaced by the Hardy operator $(-\De)^s-\frac{\ga}{|x|^{2s}}$, where $\ga\in(0, \ga_{N,s})$ and $\ga_{N,s}$ is the best Hardy constant in the fractional Hardy inequality. The multiplicity question in the scalar case  was
    already solved for this problem in the recent paper~\cite{BPh}.
\end{itemize}
}
\end{remark}
\begin{remark} {\rm
Theorem~\ref{th:uni} proves uniqueness of ground state solutions of \eqref{MAT1} with $f=0=g$. Therefore, it is interesting to ask if any positive solution of  \eqref{MAT1} with $f=0=g$ is of the form $(rw, tw)$, where $r/t=\sqrt{\al/\ba}$ and $w$ is the {\it unique} positive solution of \eqref{24-9-2}.}
\end{remark}

\section{Uniqueness for the homogeneous system}	
	
 First we need an auxiliary lemma which will be used to prove Theorem~\ref{th:uni}. Consider the following system with a parameter $\mu>0$

  \begin{equation}
	\label{MAT2}
	\left\{\begin{aligned}
		&(-\Delta)^s u = \mu  |u|^{2^{*}_s -2} u + \frac{ \al}{2_s^*}|u|^{\al-2}u|v|^{\ba}\;\;\text{in}\;\mathbb{R}^{N},\\
		&(-\Delta)^s v = \frac{\ba}{2_s^*}|v|^{\ba-2}v|u|^{\al}\;\;\text{in}\;\mathbb{R}^{N},\\
		& u, \, v >0\,  \mbox{ in }\,\mathbb{R}^{N} .
	\end{aligned}
	\right.
\end{equation}

\noindent Associated to \eqref{MAT2}, we define
\be\lab{24-9-1}
S_{\mu,\al,\ba}:=\displaystyle{\inf_{(u,v)\in \dot{H}^s\times\dot{H}^s\setminus\{(0,0)\}} \frac{\|(u,v)\|^2_{\phs}}{\bigg(\displaystyle\mu\int_{\Rn}|u|^{2^*_s}{\rm d}x+\int_{\Rn}|u|^{\al}|v|^{\ba}{\rm d}x\bigg)^{2/2^*_s}}}.
\ee

  \begin{lemma}\lab{l:gss}
  (i) Let $h(\tau):=\displaystyle\frac{1+\tau^2}{(\mu +\tau^\beta)^{2/2^{*}_s}}, \,\, \tau>0$. Then there exists $\mu_0>0$ such that for $\mu\in(0,\mu_0)$,
 $$S_{\mu,\al,\ba}=h(\tau_0)S, \quad\text{where}\quad h(\tau_0)=\min_{\tau>0}h(\tau).$$
Furthermore, $\tau_0=\tau_0(\mu, \al,\ba, N,s)>0$.

(ii) For any $r>0$, $(rw, r\tau_0 w)$ achieves $S_{\mu,\al,\ba}$, where $w$ is the unique positive solution of \eqref{24-9-2}.
  \end{lemma}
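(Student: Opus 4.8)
The plan is to reduce the vectorial minimization defining $S_{\mu,\al,\ba}$ to a one-dimensional problem by exploiting the scaling structure, exactly in the spirit of Lemma~\ref{l:S}. First I would observe that for any fixed $\tau>0$ the restriction of the Rayleigh quotient in \eqref{24-9-1} to pairs of the form $(u,\tau u)$ with a single scalar function $u\in\dot H^s(\Rn)\setminus\{0\}$ equals
$$
\frac{(1+\tau^2)\|u\|^2_{\dot H^s}}{\bigl(\mu\int_{\Rn}|u|^{2^*_s}+\tau^\beta\int_{\Rn}|u|^{2^*_s}\bigr)^{2/2^*_s}}
=\frac{1+\tau^2}{(\mu+\tau^\beta)^{2/2^*_s}}\cdot\frac{\|u\|^2_{\dot H^s}}{\bigl(\int_{\Rn}|u|^{2^*_s}\bigr)^{2/2^*_s}},
$$
using $\al+\ba=2^*_s$. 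Taking the infimum over $u$ gives $h(\tau)S$, hence $S_{\mu,\al,\ba}\le \inf_{\tau>0}h(\tau)\,S=h(\tau_0)S$, provided $h$ attains its minimum at some $\tau_0>0$; I would check the latter by noting $h$ is continuous and positive on $(0,\infty)$, $h(\tau)\to\mu^{-2/2^*_s}$ as $\tau\to 0^+$, and $h(\tau)\sim \tau^{2-2\ba/2^*_s}=\tau^{2(\al-\ba)/2^*_s+2\ba/2^*_s\cdot 0}\to\infty$ as $\tau\to\infty$ since $2>2\ba/2^*_s$ (because $\ba<2^*_s$), so a minimizer $\tau_0>0$ exists and depends only on $\mu,\al,\ba,N,s$.

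The reverse inequality $S_{\mu,\al,\ba}\ge h(\tau_0)S$ is the substantive point. Given an arbitrary competitor $(u,v)$, I would first argue (by replacing $u,v$ with $|u|,|v|$ and using that the Gagliardo seminorm does not increase under taking absolute values) that it suffices to consider nonnegative $u,v$, and then that both components are nonzero (if $v\equiv 0$ the denominator forces the quotient to be $\ge \mu^{-2/2^*_s}\|u\|^2_{\dot H^s}/(\int |u|^{2^*_s})^{2/2^*_s}\ge h(0^+)S\ge h(\tau_0)S$). For genuinely vectorial $(u,v)$ I would use the inequality $\int_{\Rn}u^\al v^\ba\le \bigl(\int u^{2^*_s}\bigr)^{\al/2^*_s}\bigl(\int v^{2^*_s}\bigr)^{\ba/2^*_s}$ (Hölder with exponents $2^*_s/\al$ and $2^*_s/\ba$) together with the Sobolev inequality $\|u\|^2_{\dot H^s}\ge S\bigl(\int u^{2^*_s}\bigr)^{2/2^*_s}$ applied to each component. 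Writing $a=\bigl(\int u^{2^*_s}\bigr)^{1/2^*_s}$, $b=\bigl(\int v^{2^*_s}\bigr)^{1/2^*_s}$, these bounds give
$$
\frac{\|u\|^2_{\dot H^s}+\|v\|^2_{\dot H^s}}{\bigl(\mu a^{2^*_s}+a^\al b^\ba\bigr)^{2/2^*_s}}
\ \ge\ \frac{S\,(a^2+b^2)}{\bigl(\mu a^{2^*_s}+a^\al b^\ba\bigr)^{2/2^*_s}}
\ =\ S\cdot h\!\left(\frac{b}{a}\right)\ \ge\ S\,h(\tau_0),
$$
where the middle equality follows by homogeneity (the expression is invariant under $(a,b)\mapsto(\la a,\la b)$, so only $\tau:=b/a$ matters, and one computes it equals $h(\tau)$ exactly). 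This proves part (i).

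For part (ii), take $u=rw$ and $v=r\tau_0 w$ with $w$ the extremal of $S$ from \eqref{24-9-2}. All the inequalities used in the lower bound become equalities: Sobolev is saturated by $w$ in each slot, Hölder is an equality because $u$ and $v$ are proportional, and $h$ is evaluated at the minimizing $\tau_0=v\text{-to-}u$ ratio. Hence the Rayleigh quotient at $(rw,r\tau_0 w)$ equals $h(\tau_0)S=S_{\mu,\al,\ba}$, so this pair is a minimizer; the value is independent of $r>0$ by the homogeneity already noted. The main obstacle is the reverse inequality in (i): one must be careful that reducing to nonnegative, nontrivial components is legitimate and that the Hölder step does not lose the constant — here it does not, precisely because equality in Hölder is attained by proportional functions, which is exactly the configuration realized by $(rw,r\tau_0 w)$, so no room is lost and the two bounds match.
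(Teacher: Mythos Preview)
Your approach is essentially the paper's: both reduce the vectorial quotient to the scalar function $h$ by introducing the ratio $\tau=\|v\|_{L^{2^*_s}}/\|u\|_{L^{2^*_s}}$ and bounding $\int|u|^\al|v|^\ba$ above by $\|u\|_{L^{2^*_s}}^\al\|v\|_{L^{2^*_s}}^\ba$. Your use of H\"older here is equivalent to the paper's Young inequality after the rescaling $z_n=v_n/\tau_n$, and the rest of the argument (upper bound via $(u,\tau u)$, part~(ii) by saturation of all inequalities) is the same.

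There is, however, one genuine gap. From ``$h$ continuous on $(0,\infty)$, $h(0^+)=\mu^{-2/2^*_s}$ finite, $h(\infty)=\infty$'' you \emph{cannot} conclude that a minimizer $\tau_0>0$ exists: the infimum over $(0,\infty)$ could equal $h(0^+)$ and fail to be attained (think of $1+\tau^2$). In fact, for $\ba>2$ one checks $h'(\tau)>0$ for all small $\tau$ (since the bracket $\mu 2^*_s+\al\tau^\ba-\ba\tau^{\ba-2}\to\mu 2^*_s>0$), and for $\mu$ large the critical-point equation $\mu 2^*_s+\al\tau^\ba-\ba\tau^{\ba-2}=0$ has no positive root, so $h$ is strictly increasing and the infimum is $h(0^+)$, unattained. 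This is exactly why the lemma carries the hypothesis $\mu\in(0,\mu_0)$, which you never invoke. The fix is the paper's: compare $h(0)=\mu^{-2/2^*_s}$ with $h(1)=2(1+\mu)^{-2/2^*_s}$ and choose $\mu_0$ so that $h(0)>h(1)$ for $\mu<\mu_0$; then the infimum is strictly below $h(0^+)$ and, combined with $h(\infty)=\infty$, is attained at some $\tau_0\in(0,\infty)$. Without this step part~(ii) also breaks down, since there would be no $\tau_0$ with which to form $(rw,r\tau_0 w)$.
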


 \begin{proof}
 Let $\{(u_n, v_n)\}$ be a minimizing sequence for $S_{\mu,\al,\ba}$. Choose $\tau_n>0$ such that $\|v_n\|_{L^{2^*_s}(\Rn)}=\tau_n\|u_n\|_{L^{2^*_s}(\Rn)}$. Now set, $\displaystyle z_n=\frac{v_n}{\tau_n}$. Therefore, $\|u_n\|_{L^{2^*_s}(\Rn)}=\|z_n\|_{L^{2^*_s}(\Rn)}$ and applying Young's inequality,
 $$\int_{\Rn}|u_n|^\al|z_n|^{\ba}{\rm d}x\leq \frac{\al}{2^*_s}\int_{\Rn}|u_n|^{2^*_s}{\rm d}x+\frac{\ba}{2^*_s}\int_{\Rn}|z_n|^{2^*_s}{\rm d}x=\int_{\Rn}|u_n|^{2^*_s}{\rm d}x=\int_{\Rn}|z_n|^{2^*_s}{\rm d}x.$$
 Hence,
 \Bea
 S_{\mu,\al,\ba}+o(1)&=&\frac{\|u_n\|^2_{\dot{H}^s}+\|v_n\|^2_{\dot{H}^s}}{\bigg(\displaystyle\mu\int_{\Rn}|u_n|^{2^*_s}{\rm d}x+\int_{\Rn}|u_n|^{\al}|v_n|^{\ba}{\rm d}x\bigg)^{2/2^*_s}}\\
 &=&\frac{\|u_n\|^2_{\dot{H}^s}}{\bigg(\displaystyle\mu\int_{\Rn}|u_n|^{2^*_s}{\rm d}x+\tau_n^\ba\int_{\Rn}|u_n|^{\al}|z_n|^{\ba}{\rm d}x\bigg)^{2/2^*_s}}\\
&&\quad +\frac{\tau_n^2\|z_n\|^2_{\dot{H}^s}}{\bigg(\displaystyle\mu\int_{\Rn}|z_n|^{2^*_s}{\rm d}x+\tau_n^\ba\int_{\Rn}|u_n|^{\al}|z_n|^{\ba}{\rm d}x\bigg)^{2/2^*_s}}\\
&\geq&\frac{1}{(\mu+\tau_n^\ba)^{2/2^*_s}}\frac{\|u_n\|^2_{\dot{H}^s}}
{\big(\int_{\Rn} |u_n|^{2^*_s}{\rm d}x\big)^{2/2^*_s}}+\frac{\tau_n^2}{(\mu+\tau_n^\ba)^{2/2^*_s}}\frac{\|z_n\|^2_{\dot{H}^s}}
{\big(\int_{\Rn} |z_n|^{2^*_s}{\rm d}x\big)^{2/2^*_s}}\\
&\geq&\frac{1+\tau_n^2}{(\mu +\tau_n^\beta)^{2/2^{*}_s}} S\geq \min_{\tau>0}h(\tau)S.
 \Eea
 Note that $h$ is a $C^1$ function with  $h(\tau)>0$ for all $\tau\geq 0$, $h(\tau)\to \infty$ as $\tau\to\infty$ and $h(\tau)\to\mu^{-\frac{2}{2^*_s}}$ as $\tau\to 0$. Therefore, there exists $\tau_0\geq 0$ such that $\min_{\tau>0}h(\tau)=h(\tau_0)$. Next, we claim that $\tau_0>0$, if we choose $\mu>0$ small enough. To prove the claim, first we note that $h(0)=\mu^{-\frac{2}{2^*_s}}$ and
 $h(1)=2(1+\mu)^{-{2/2^*_s}}$. Therefore, we can choose $\mu_0>0$ small enough such that for $\mu\in(0,\mu_0)$,  $h(0)>h(1)$. Thus, $h$ can not attain global minimum at $0$, if $\mu\in(0,\mu_0)$. Hence $\tau_0>0$.

Consequently, $S_{\mu,\al,\ba}+o(1)\geq h(\tau_0)S,$ and as $o(1)\to 0$ as $n\to \infty$, we get $S_{\mu,\al,\ba}\geq h(\tau_0)S$. On the other hand, choosing $(u,v)=(w, \tau_0 w)$, we easily see that
 $S_{\mu,\al,\ba}\leq h(\tau_0)S$. Hence $S_{\mu,\al,\ba}=h(\tau_0)S$.

Since $\tau_0$ is the minimum point for $h$, clearly $h'(\tau_0)=0$. Thus $\tau_0$ satisfies $$\tau\big(\mu  2^{*}_s + \alpha \tau^\beta - \beta \tau^{\beta-2}\big)=0.$$
But $\tau_0>0$, and so $\tau_0$ satisfies $\mu  2^{*}_s + \alpha \tau^\beta - \beta \tau^{\beta-2}=0$. This proves (i).

(ii) Note that for $(u,v)=(rw, r\tau_0 w)$, an easy computation yields
 $$ \frac{\|(u,v)\|^2_{\phs}}{\bigg(\displaystyle\mu\int_{\Rn}|u|^{2^*_s}{\rm d}x+\int_{\Rn}|u|^{\al}|v|^{\ba}{\rm d}x\bigg)^{2/2^*_s}}=h(\tau_0)S.$$
 Hence using (i), we conclude that $S_{\mu,\al,\ba}$ is achieved by $(rw, r\tau_0 w)$.
  \end{proof}

 \medskip

{\bf Proof of Theorem~\ref{th:uni}}\begin{proof}
Suppose that $(u_0,v_0)$  and $w$ achieves $S_{\alpha,\beta}$  and $S$ respectively. We are going to prove that there are $r, t>0$ such that
$$(u_0,v_0)=(r w, t w).$$

	
	\noindent
	
	 {\bf Claim. }  \begin{description}\item[a)] $$\int_{\R^N}  |u_0|^{\alpha}|v_0|^{\beta} \,{\rm d}x = r^{\alpha} t^{\beta} \int_{\R^N}  w^{ 2^{*}_s}\,{\rm d}x, \quad\text{ whenever}\quad \frac{r}{t}=\sqrt{\frac{\al}{\ba}}.$$
	\item[b)] There exists $r>0$ such that $$\int_{\R^N}  |u_0|^{2^{*}_s}\,{\rm d}x = r^{2^{*}_s} \int_{\R^N}  w^{ 2^{*}_s}\,{\rm d}x. $$
	\end{description}
	Assuming the Claim for a while,  first we complete the proof.
	
	Indeed, fix $r$ as found in claim b) and set $t=r\sqrt{\ba/\al}$. Therefore, by Lemma~\ref{l:S}, $(rw, tw)$ achieves $S_{\al,\ba}$. Consequently, $(rw, tw)$ solves \eqref{MAT1} with $f=0=g$ and
	\begin{equation}\label{igual} \frac{\al}{2^{*}_s } r^{\alpha-2}t^{\beta}= 1= \frac{\ba}{2^{*}_s}  r^{\alpha}t^{\beta-2}.\end{equation}
Now define $(u_1,v_1)=(\frac{ u_0}{r},\frac{v_0}{t}) .$
Then, by Claim a)  we have
\begin{equation*}
\|u_1\|_{\dot{H}^{s}}^2 = \frac{1}{r^2}\|u_0\|_{\dot{H}^{s}}^2=\frac{\alpha}{ 2^{*}_s  r^2} \int_{\R^N}  |u_0|^{\alpha}|v_0|^{\beta} \,{\rm d}x =\frac{\alpha r^{\alpha} t^{ \beta}}{ 2^{*}_s  r^2} \int_{\R^N}  |w|^{2^{*}_s } \,{\rm d}x = \|w\|_{\dot{H}^{s}}^2,  \end{equation*}
where for the last equality we have used (\ref{igual}). Similarly, it follows that
$$ \|v_1\|_{\dot{H}^{s}}^2= \|w\|_{\dot{H}^{s}}^2. $$
Therefore
\begin{equation}\label{(*)}
 \|u_1\|_{\dot{H}^{s}}^2= \|w\|_{\dot{H}^{s}}^2= \|v_1\|_{\dot{H}^{s}}^2.
\end{equation}
\noindent
Further, using Claim b) in the definition of $u_1$ yields
\begin{equation}\label{(**)}
\int_{\R^N}  |u_1|^{2^{*}_s} \,{\rm d}x =\int_{\R^N}  | w|^{2^{*}_s } \,{\rm d}x.
\end{equation}
Combining (\ref{(*)}) and (\ref{(**)}), by the uniqueness result in the scalar case, see  \cite{CZ},  we conclude that
$$u_1=w, \ \mbox{that is}\  u_0=r w.$$
Now we prove that $v_1=w$.  Indeed, by Claim a)
\Bea\int_{\Rn}| w|^{2^{*}_s } \,{\rm d}x=\int_{\Rn}|u_1|^\al|v_1|^\ba{\rm d}x&\leq&\bigg(\int_{\Rn}|u_1|^{2^*_s}{\rm d}x\bigg)^{\al/2^*_s}\bigg(\int_{\Rn}|v_1|^{2^*_s}{\rm d}x\bigg)^{\ba/2^*_s}\\
&=&\bigg(\int_{\Rn}|w|^{2^*_s}{\rm d}x\bigg)^{\al/2^*_s}\bigg(\int_{\Rn}|v_1|^{2^*_s}{\rm d}x\bigg)^{\ba/2^*_s}. \Eea
Consequently, $\|w\|_{L^{2^*_s}}\leq \|v_1\|_{L^{2^*_s}}$. Combining this with \eqref{(*)} and the fact that $w$ achieves $S$, we obtain
$$S^{-{1/2}}\|v_1\|_{\dot{H}^s}=S^{-{1/2}}\|w\|_{\dot{H}^s}=\|w\|_{L^{2^*_s}}\leq \|v_1\|_{L^{2^*_s}}\leq S^{-{1/2}}\|v_1\|_{\dot{H}^s}.$$
Hence the inequality becomes equality in the above expression, i.e., $v_1$ achieves $S$. Again by the uniqueness result in the scalar case, we conclude that
$$v_1=w, \ \mbox{that is}\  v_0=t w.$$

\noindent
This proves Theorem \ref{th:uni}. Now we are going to prove the Claim. First, we prove {\bf Claim a)}.

\medskip

Consider the following problem with a parameter $\mu>0$
 \begin{equation}
	\tag{$\mathcal S_\mu$}\label{MAT1mu}
	\left\{\begin{aligned}
		&(-\Delta)^s u = \frac{\mu \al}{2_s^*}|u|^{\al-2}u|v|^{\ba}\;\;\text{in}\;\mathbb{R}^{N},\\
		&(-\Delta)^s v = \frac{\mu\ba}{2_s^*}|v|^{\ba-2}v|u|^{\al}\;\;\text{in}\;\mathbb{R}^{N},\\
		& u, \, v >0\,  \mbox{ in }\,\mathbb{R}^{N}.
	\end{aligned}
	\right.
\end{equation}
Associated to \eqref{MAT1mu}, define the following min-max problem
$$B(\mu):=\inf_{(u,v)\in \dot{H}^s\times \dot{H}^s\setminus\{(0,0)\}} \max_{t > 0} E_\mu( t u,t v),$$
where
$$E_\mu(u,v):=\frac{1}{2}\|(u,v)\|^2_{ \dot{H}^s\times \dot{H}^s}-\frac{\mu}{2^{*}_s}\int_{\R^N}|u|^{\al}|v|^{\ba}\,{\rm d}x.$$
\noindent
Note that there exists $t_\mu> 0$ such that
$$\max_{t >0} E_\mu (t u_0, t v_0)=E_\mu (t_\mu u_0, t_\mu v_0),$$
where $t_\mu$ satisfies
$$H(\mu,t_\mu ) =0 \quad  \mbox{and}\quad  H(\mu,t):= C- \mu D t^{ 2^{*}_ s -2},$$
with
$$ C= \|(u_0,v_0)\|^{2}_{\dot{H}^s\times\dot{H}^s} \ \mbox{and}\ D=\int_{\R^N}|u_0|^{\al}|v_0|^{\ba}\,{\rm d}x.$$
Since $(u_0,v_0)$ is a least energy solution of \eqref{MAT1} with $f=0=g$,
$$H(1,1)=0,\ \frac{\partial}{\partial t} H(1,1)< 0 \ \mbox{and}\  H(\mu,t_u )=0.$$
By the implicit function theorem $t_\mu$ is a $C^1$ function near of $\mu=1,$ and
$$t^{\prime}_\mu \mid_{\mu=1}= -\frac{ \frac{\partial}{\partial \mu} H}{\frac{\partial}{\partial t} H}\bigg|_{\mu=1=t} =-\frac{1}{(2^{*}_s -2)} .$$
By the Taylor formula around $\mu=1$,  we have
 $$t_\mu(\mu)= 1 + t_\mu^{\prime}(1)(\mu-1) + O(|\mu-1|^2)$$
Consequently, $$t_\mu^2(\mu)= 1 + 2t_\mu^{\prime}(1)(\mu-1) + O(|\mu-1|^2).$$
Further, as $H(\mu,t_ \mu)= C- \mu D t^{ 2^{*}_ s -2} _{\mu}=0$ and $C=D$, we have $t^{ 2^{*}_ s -2} _{\mu}=\mu^{-1}$. Therefore,
 \begin{eqnarray}\lab{14-9-2}
 B(\mu) \leq E_\mu(t_\mu u_0, t_\mu v_0)
 &=&t^{2}_{\mu}\bigg(\frac{1}{2}-\frac{\mu t_\mu^{2^*_s-2}}{2^*_s}\bigg)\|(u_0,v_0)\|^2_{\phs}\no\\
 &=& t^{2}_{\mu}\frac{s}{N}\|(u_0,v_0)\|^2_{\phs}= t^{2}_{\mu} B(1)\no\\
 &=& B(1) - \frac{2}{(2^*_s -2)} B(1) (\mu -1) + O(|\mu -1|^2).
 \end{eqnarray}
From the definition of $B(1)$,  a direct computation yields
\begin{align}\lab{14-9-3}
B(1)&=\inf_{(u,v)\in \dot{H}^s\times \dot{H}^s\setminus\{(0,0)\}}E_1(\tilde t u, \tilde tv), \quad\text{where}\quad \tilde t=\bigg(\frac{\|(u,v)\|_{\phs}^2}{\int_{\Rn}|u|^\al|v|^\ba{\rm d}x}\bigg)^{1/(2^*_s-2)}\no\\
&=\inf_{(u,v)\in \dot{H}^s\times \dot{H}^s\setminus\{(0,0)\}}\frac{s}{N}\bigg(\frac{\|(u,v)\|_{\phs}^2}{\big(\int_{\Rn}|u|^\al|v|^\ba{\rm d}x\big)^{2/2^*_s}}\bigg)^{2^*_s/(2^*_s-2)}\no\\
&=\frac{s}{N}S_{\al,\ba}^\frac{2^*_s}{2^*_s-2}=\frac{s}{N}\bigg(\frac{\|(u_0,v_0)\|_{\phs}^2}{\big(\int_{\Rn}|u_0|^\al|v_0|^\ba{\rm d}x\big)^{2/2^*_s}}\bigg)^{2^*_s/(2^*_s-2)}=\frac{sD}{N}.
\end{align}
Substituting the above value of $B(1)$ in \eqref{14-9-2} yields
$$B(\mu)\leq B(1) - \frac{D}{2^{*}_s  } (\mu -1) + O(|\mu -1|^2).$$
Thus
\begin{equation}\lab{6-10-1}
 \frac{B(\mu) -B(1)}{\mu -1}\begin{cases}
\leq - \frac{D}{2^{*}_s  }+ O(|\mu -1|) \quad\text{if}\quad \mu>1\\
\geq - \frac{D}{2^{*}_s  }+ O(|\mu -1|)\quad\text{if}\quad \mu<1.
\end{cases}
\end{equation}
The first inequality in \eqref{6-10-1} implies $\displaystyle B'(1)\leq - \frac{D}{2^{*}_s  }$ and the second inequality in \eqref{6-10-1} implies $\displaystyle B'(1)\geq - \frac{D}{2^{*}_s  }$.
 Hence,
\be\lab{14-9-4}B^{\prime}(1)=  - \frac{D}{2^{*}_s  }= - \frac{1}{2^{*}_s  }\int_{\R^N}|u_0|^{\al}|v_0|^{\ba}\,{\rm d}x.\ee
On the other hand, proceeding as in \eqref{14-9-3}, we derive that
$$B(\mu)=\frac{s}{N}\frac{1}{\mu^\frac{2}{2^*_s-2}}\inf_{(u,v)\in \dot{H}^s\times \dot{H}^s\setminus\{(0,0)\}}\bigg(\frac{\|(u,v)\|_{\phs}^2}{\big(\int_{\Rn}|u|^\al|v|^\ba{\rm d}x\big)^\frac{2}{2^*_s}}\bigg)^{2^*_s/(2^*_s-2)}=\frac{s}{N}\frac{1}{\mu^\frac{2}{2^*_s-2}}S_{\al,\ba}^\frac{2^*_s}{2^*_s-2}.$$
Since, $(r w,t w)$ (for any $r, t>0$ with $r/t=\sqrt{\al/\ba}$) is also a  ground state solution of (\ref{MAT1}) with $f=0=g$, from the above expression of $B(\mu)$, we obtain
  $$ B(\mu)=\frac{s}{N}\frac{1}{\mu^\frac{2}{2^*_s-2}}r^{\alpha}t^{\beta} \int_{\Rn}|w|^{2^*_s}{\rm d}x \quad\implies\quad
B^{\prime}(1)=- \frac{r^{\alpha}t^{\beta} }{2^{*}_s  }\int_{\R^N}|w|^{2^{*}_s}\,{\rm d}x.$$
Comparing this with \eqref{14-9-4}, we conclude that
  $$\int_{\R^N}|u_0|^{\al}|v_0|^{\ba}\,{\rm d}x= r^{\alpha}t^{\beta} \int_{\R^N}|w|^{2^{*}_s}\,{\rm d}x,$$
  where $r, t>0$ are arbitrary with $r/t=\sqrt{\al/\ba}$. This proves Claim a).

  \medskip

  Let us turn to the proof of {\bf Claim b)}. Let $\mu_0$ be as in Lemma~\ref{l:gss}. Consider the system \eqref{MAT2} with $\mu\in (0,\mu_0)$ and define the following min-max problem
$$\tilde B(\mu):=\inf_{(u,v)\in \dot{H}^s\times\dot{H}^s\setminus\{(0,0)\}} \max_{t > 0} \tilde E_\mu( t u,t v),$$
where
$$\tilde E_\mu(u,v):=\frac{1}{2}\|(u,v)\|^2_{\Hs\times\Hs}-\frac{\mu}{2^{*}_s}\int_{\R^N} |u|^{ 2^{*}_s}\,{\rm d}x - \frac{1}{2^{*}_s}\int_{\R^N}|u|^{\al}|v|^{\ba}\,{\rm d}x.$$
\noindent
Note that there exists $t_\mu> 0$ such that
$$\max_{t >0} \tilde E_\mu (t u_0, t v_0)=\tilde E_\mu (t_\mu u_0, t_\mu v_0),$$
where $t_\mu$ satisfies
$$\tilde H(\mu,t_\mu ) =0 \  \mbox{and}\  \tilde H(\mu,t)= C-( \mu G+ D) t^{ 2^{*}_ s -2}$$
with
$$  C= \|(u_0,v_0)\|^{2}_{\dot{H}^s\times\dot{H}^s}\quad  G=\int_{\R^N}|u_0|^{2^{*}_s}\,{\rm d}x \quad \mbox{and}\quad  D=\int_{\R^N}|u_0|^{\al}|v_0|^{\ba}\,{\rm d}x.$$
Since $(u_0,v_0)$ is a ground state solution of \eqref{MAT1} with $f=0=g$,
$$\tilde H(0,1)=C-D= 0,\quad \frac{\partial}{\partial t} \tilde H(0,1) =-(2^{*}_s -2)D\quad \mbox{and}\quad  \frac{\partial}{\partial \mu}\tilde H(0,1)=-G, $$
evaluated at $t=1$ and $\mu=0.$

By the implicit function theorem $t_\mu$ is a $C^1$ function near of $\mu=0,$ and
$$t^{\prime}_\mu \mid_{\mu=0}= -\frac{ \frac{\partial}{\partial \mu} \tilde H}{\frac{\partial}{\partial t}\tilde H}\mid_{\mu=0,t=1} =-\frac{G}{(2^{*}_s -2)D} .$$
The Taylor formula around $\mu=0$ and $t_\mu=1$  yields
 $$t_\mu(\mu)= 1 +\mu t_\mu^{\prime}(0) + O(|\mu|^2),$$
consequently, $$t_\mu^2(\mu)= 1 + 2\mu t_\mu^{\prime}(0) + O(|\mu|^2).$$
Now  $\tilde B(0)=B(1)$, where $B(.)$ is as defined in the proof of Claim a). Therefore,  $\tilde B(0)=\displaystyle\frac{sD}{N}$.

Since $\tilde H(\mu,t_ \mu)= C-( \mu G+D) t^{ 2^{*}_ s -2} =0$, and $C=D$ using an argument as before, it follows that

 \begin{eqnarray*}
 \tilde B(\mu) \leq \tilde E_\mu(t_\mu u_0, t_\mu v_0)
 =\frac{t^{2}_{\mu}}{2}C-\frac{t^{2^*_s}_{\mu}}{2^*_s}(\mu G+D)
 &=& t^{2}_{\mu}\frac{sD}{N}=t^{2}_{\mu}\tilde B(0)\\
 &=& \tilde B(0) - \frac{2G}{(2^{*}_s -2)D}  \mu \tilde B(0) + O(|\mu |^2)\\
 &=&\tilde B(0) - \frac{1}{2^{*}_s  } G\mu + O(|\mu|^2).
 \end{eqnarray*}
Then
\be\lab{16-9-3}\tilde B^{\prime}(0)= \lim_{\mu \rightarrow 0} \frac{\tilde B(\mu) -\tilde B(0)}{\mu } = - \frac{G}{2^{*}_s  }=  - \frac{1}{2^{*}_s  }\int_{\R^N}|u_0|^{2^{*}_s}\,{\rm d}x.\ee
On the other hand,  from the definition of $\tilde B(\mu)$, a straight forward computation yields

\begin{align}
 \tilde B(\mu)&=\inf_{(u,v)\in \phs\setminus\{(0,0)\}} \tilde E_\mu (\tilde t u,\tilde t v) \quad\text{where}\quad \tilde t= \bigg(\frac{\|(u,v)\|_{\phs}^2}{\mu\int_{\Rn}|u|^{2^*_s}{\rm d}x+\int_{\Rn}|u|^\al|u|^\ba{\rm d}x}\bigg)^{1/(2^*_s-2)}\no\\
 &=\frac{s}{N}\inf_{(u,v)\in \phs\setminus\{(0,0)\}}\bigg[\frac{\|(u,v)\|^2}{\big(\mu\int_{\Rn}|u|^{2^*}{\rm d}x+\int_{\Rn}|u|^\al|v|^\ba{\rm d}x\big)^{2/2^*_s}}\bigg]^{2^*_s/(2^*_s-2)}\no .
\end{align}
Since by Lemma~\ref{l:gss}, $S_{\mu,\al,\ba}$ is achieved by $( r w, \tau_0 r w)$,  an easy computation yields
 $$ \tilde B(\mu)=\frac{s}{N}\bigg(\frac{1+\tau_0^2}{ (\mu +\tau_0^\beta)^{2/2^{*}_s} }\bigg)^{2^*_s/(2^*_s-2)}\int_{\Rn}|u|^{2^*}{\rm d}x.$$
As a consequence, $$ \tilde B'(0)=-\frac{1}{2^*_s}\bigg(\frac{1+\tau_0^2}{\tau_0^{\ba}}\bigg)^{2^*_s/(2^*_s-2)}\int_{\R^N}|w|^{2^{*}_s}\,{\rm d}x.$$
Now set  $$\tilde r=\bigg(\frac{1+\tau_0^2}{\tau_0^{\ba}}\bigg)^{1/(2^*_s-2)}.$$
Therefore, $\tilde B'(0)=-\displaystyle\frac{\tilde r^{2^*_s}}{2^*_s}\displaystyle\int_{\R^N}|w|^{2^{*}_s}\,{\rm d}x.$ Comparing this with \eqref{16-9-3} yields
$$\int_{\R^N}|u_0|^{2^{*}_s}\,{\rm d}x= \tilde r^{2^*_s}\int_{\R^N}|w|^{2^{*}_s}\,{\rm d}x.$$
This proves Claim b). Thus, we conclude the proof of Theorem~\ref{th:uni}.
 \end{proof}

\section{The Palais-Smale  decomposition}

In this section we study the Palais-Smale sequences (in short, $(PS)$ sequences) of the functional associated to \eqref{MAT1}, namely,
\be\label{EF}
			I_{f,g}(u,v):= \frac{1}{2}\|(u,v)\|^2_{\dot{H}^s\times\dot{H}^s}-\frac{1}{2^*_s}\int_{\R^N}|u|^{\al}|v|^{\ba}\,{\rm d}x -\prescript{}{(\dot{H}^s)'}{\langle}f,u{\rangle}_{\dot{H}^s}-\prescript{}{(\dot{H}^s)'}{\langle}g,v{\rangle}_{\dot{H}^s}.
		\ee
We say that the sequence $\{(u_n,v_n)\}\subset \dot{H}^s(\R^N)\times\Hs$ is a $(PS)$ sequence for $ I_{f,g}$ at level $\ba$ if $ I_{f,g}(u_n,v_n)\to \ba$ and $I_{f,g}'(u_n,v_n)\to 0$ in $\big(\dot{H}^s(\Rn)\times\Hs\big)'$. It is easy to see that the weak limit of a $(PS)$ sequence of $I_{f,g}$ solves \eqref{MAT1} except the positivity.

However the main
difficulty is that the $(PS)$ sequence may not converge strongly and hence the weak limit can be zero even if $\ba>0.$ The main purpose of this section is to classify $(PS)$ sequences for
the functional $I_{f,g}$.

\begin{proposition}\label{PSP}
				
Let $\{(u_n,v_n)\} \subset \dot{H}^s(\R^N)\times\Hs$ be a $(PS)$ sequence for $I_{f,g}$ at a level $\ga$. Then there exists a subsequence $($still denoted by $\{(u_n,v_n)\})$ for which
the following hold :\\
there exist an integer $k\geq 0$, sequences $\{x_{n}^{i}\}_{n}\subset\Rn$, $r_n^i>0$ for $1\leq i \leq k$, pair of functions $(u,v),\;(\tilde{u}_i,\tilde{v}_i)\in\Hs\times\Hs$ for $1\leq i\leq k$ such that $(u,v)$ satisfies \eqref{MAT1} without the signed restrictions and
\begin{equation}\label{PSD2}
		\left\{\begin{aligned}
			&(-\Delta)^s \tilde{u}_i = \frac{\al}{2_s^*}|\tilde{u}_i|^{\al-2}\tilde{u}_i|\tilde{v}_i|^{\ba}\;\;\text{in}\;\mathbb{R}^{N},\\
			&(-\Delta)^s \tilde{v}_i = \frac{\ba}{2_s^*}|\tilde{v}_i|^{\ba-2}\tilde{v}_i|\tilde{u}_i|^{\al}\;\;\text{in}\;\mathbb{R}^{N},\\
		\end{aligned}
		\right.
	\end{equation}
	\be
		\begin{split}
			(u_n,v_n)=(u,v) +\sum_{i=1}^{k}(\tilde u_i,\tilde v_i)^{r_n^i, x_n^i}+o(1),\\
						\mbox{where } (\tilde u_i,\tilde v_i)^{r, y}:=r^{-\frac{N-2s}{2}}\left(\tilde u_i(\tfrac{x-y}{r}),\tilde v_i(\tfrac{x-y}{r})\right) \lab{12-13-3}\\
							\mbox{and }\,\,  o(1)\to 0\quad \text{in}\,\,\Hs\times\Hs,\\
\ga= I_{f,g}(u,v)+\sum_{i=1}^{k}I_{0,0}(\tilde u_i,\tilde v_i)+o(1),
		\end{split}
		\ee		

		\be\begin{split}
		r_n^i\to 0 \,\,
		\mbox{and either $x_n^i\to x^i\in\Rn$ or }|x_n^i|\to\infty,\quad  1\leq i\leq k,\\		
		 \bigg|\log\bigg(\frac{r^i_n}{r_n^{j}}\bigg)\bigg|+\bigg|\frac{x_n^i-x_n^j}{r_n^i}\bigg|
\To\infty\quad \mbox{for } i\neq j,\, \, 1\leq i,\, j\leq k, \end{split}\ee
				
		\noi where  in the case $k=0$ the above expressions hold without $(\tilde u_i,\tilde v_i)$, $x_{n}^{i}$ and $r_n^i$.
				\end{proposition}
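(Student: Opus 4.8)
The argument follows the classical concentration–compactness strategy of Struwe (adapted to systems and to the fractional setting), carried out by induction on the energy. The plan is as follows.

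\emph{Step 1: boundedness of the $(PS)$ sequence.} First I would show that any $(PS)$ sequence $\{(u_n,v_n)\}$ at level $\ga$ is bounded in $\Hs\times\Hs$. Using the homogeneity relation $\al+\ba=2^*_s$ one has, for large $n$,
\[
\ga+1+o(1)\|(u_n,v_n)\|\ \ge\ I_{f,g}(u_n,v_n)-\tfrac{1}{2^*_s}\langle I'_{f,g}(u_n,v_n),(u_n,v_n)\rangle
=\Big(\tfrac12-\tfrac1{2^*_s}\Big)\|(u_n,v_n)\|^2-\big(1-\tfrac1{2^*_s}\big)\big(\langle f,u_n\rangle+\langle g,v_n\rangle\big),
\]
and since $f,g\in(\dot H^s)'$ the last term is controlled by $C\|(u_n,v_n)\|$; this yields a uniform bound. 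Passing to a subsequence, $(u_n,v_n)\deb(u,v)$ weakly, and a standard argument (Brezis–Lieb / weak continuity of the subcritical-type duality pairing and of the derivative of the nonlinear term) shows $(u,v)$ is a weak solution of \eqref{MAT1} without the sign conditions, i.e. $I'_{f,g}(u,v)=0$.

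\emph{Step 2: the residual sequence.} Set $(u_n^1,v_n^1):=(u_n-u,\,v_n-v)$. By the Brezis–Lieb lemma (applied to $|u|^\al|v|^\ba$, which requires a vectorial Brezis–Lieb splitting — this is where the non-symmetry of the system enters and must be handled with care) together with $I'_{f,g}(u,v)=0$, one gets that $\{(u_n^1,v_n^1)\}$ is a $(PS)$ sequence for the \emph{homogeneous} functional $I_{0,0}$ at level $\ga-I_{f,g}(u,v)$, and $(u_n^1,v_n^1)\deb 0$. If $(u_n^1,v_n^1)\to0$ strongly we are done with $k=0$. Otherwise, one shows the residual energy is bounded below by a fixed positive constant, namely $\tfrac{s}{N}S_{\al,\ba}^{N/2s}$, using the Sobolev-type inequality defining $S_{\al,\ba}$ (Lemma~\ref{l:S}) together with $\|(u_n^1,v_n^1)\|^2=\int|u_n^1|^\al|v_n^1|^\ba+o(1)$.

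\emph{Step 3: extracting one bubble via rescaling.} This is the heart of the proof and the main obstacle. Following Struwe's Lévy-concentration-function argument, one defines, for the nonzero $(PS)_{0,0}$ sequence $(u_n^1,v_n^1)$, a concentration function and picks a scale $r_n^1$ and a center $x_n^1$ at which a fixed fraction of the $L^{2^*_s}$-type mass of the pair is captured. Setting
\[
(\tilde u_n^1,\tilde v_n^1):=(u_n^1,v_n^1)^{1/r_n^1,-x_n^1/r_n^1}=(r_n^1)^{\frac{N-2s}{2}}\big(u_n^1(r_n^1 x+x_n^1),\,v_n^1(r_n^1 x+x_n^1)\big),
\]
the rescaling invariance of the $\dot H^s$-norm and of the critical term shows $\{(\tilde u_n^1,\tilde v_n^1)\}$ is again a bounded $(PS)_{0,0}$ sequence; let $(\tilde u_1,\tilde v_1)$ be its weak limit. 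The key points to establish are (i) $(\tilde u_1,\tilde v_1)\ne(0,0)$ — forced by the choice of concentration scale, via a local compactness/vanishing dichotomy in the spirit of Lions on balls of radius $1$, using $\dot H^s\hookrightarrow L^p_{loc}$ for $p<2^*_s$ and Rellich; (ii) $(\tilde u_1,\tilde v_1)$ solves \eqref{PSD2}, i.e. the limiting homogeneous system (again weak continuity of $I'_{0,0}$); (iii) either $r_n^1\to0$ (the case $r_n^1\to\infty$ is excluded because the weak limit $(u,v)$ already absorbed the ``zero-scale'' part, and $r_n^1\to r_\infty\in(0,\infty)$ is excluded because then $(u_n^1,v_n^1)$ would converge strongly after a fixed rescaling, contradicting weak convergence to $0$), and correspondingly $x_n^1\to x^1$ or $|x_n^1|\to\infty$.

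\emph{Step 4: iteration and termination.} Put $(u_n^2,v_n^2):=(u_n^1,v_n^1)-(\tilde u_1,\tilde v_1)^{r_n^1,x_n^1}$. One checks (Brezis–Lieb again, now in the rescaled variables, plus the fact that $(\tilde u_1,\tilde v_1)$ solves \eqref{PSD2}) that $\{(u_n^2,v_n^2)\}$ is a $(PS)_{0,0}$ sequence at level $\ga-I_{f,g}(u,v)-I_{0,0}(\tilde u_1,\tilde v_1)$, with $\|(u_n^2,v_n^2)\|^2=\|(u_n^1,v_n^1)\|^2-\|(\tilde u_1,\tilde v_1)\|^2+o(1)$. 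If this new sequence goes to $0$ strongly, stop with $k=1$; otherwise repeat Step 3 to produce $(\tilde u_2,\tilde v_2)$, $r_n^2$, $x_n^2$. Each extracted bubble, being a nontrivial solution of \eqref{PSD2}, carries energy $I_{0,0}(\tilde u_i,\tilde v_i)\ge \tfrac{s}{N}S_{\al,\ba}^{N/2s}>0$; since the total energy $\ga-I_{f,g}(u,v)$ is finite and the energy decrements are by a fixed positive amount at each step, the process terminates after finitely many steps $k$. Finally, the separation condition
\[
\Big|\log\tfrac{r_n^i}{r_n^j}\Big|+\Big|\tfrac{x_n^i-x_n^j}{r_n^i}\Big|\to\infty\qquad(i\ne j)
\]
is obtained exactly as in Struwe: if it failed along a subsequence for some pair $i<j$, then after the $i$-th rescaling the $j$-th bubble would converge weakly to a \emph{nonzero} limit, contradicting the fact that $(u_n^{i+1},v_n^{i+1})\deb0$ by construction. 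Collecting Steps 1–4 and relabelling gives the decomposition \eqref{12-13-3} together with the energy identity, the vanishing-scale property and the mutual-separation property, completing the proof.

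\textbf{Main obstacle.} The genuinely new difficulty compared with the scalar case is that all the Brezis–Lieb type splittings and the weak-continuity statements must be carried out for the \emph{coupled, non-symmetric} nonlinearity $|u|^\al|v|^\ba$ with two independent components that can concentrate on different scales a priori; controlling the cross terms in $|u_n|^\al|v_n|^\ba$ under rescaling — and in particular showing that the components $\tilde u_i$ and $\tilde v_i$ concentrate at the \emph{same} scale $r_n^i$ and center $x_n^i$ (so that the limit solves the genuine system \eqref{PSD2} rather than a degenerate version) — is the step requiring the most care, and is handled using the elementary product-of-Morrey-space embeddings recorded in the Appendix.
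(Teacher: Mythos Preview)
Your proposal is correct and follows essentially the same route as the paper: boundedness via the standard $I-\tfrac{1}{2^*_s}I'$ trick, weak limit solves the system, vectorial Brezis--Lieb (the paper's Lemmas~\ref{l:j}--\ref{l:BL}) to show the residual is a $(PS)_{0,0}$ sequence at level $\ga-I_{f,g}(u,v)$, bubble extraction by rescaling, and termination via the energy quantum $\tfrac{s}{N}S_{\al,\ba}^{N/2s}$.

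The one place the paper differs slightly from what you describe in Step~3 is the mechanism for selecting $(r_n,x_n)$: rather than Struwe's L\'evy concentration function on $L^{2^*_s}$-mass, the paper uses the Palatucci--Pisante Morrey-space route (Lemma~\ref{l:12-13-1} and \eqref{3-9-1}): since $\|(\tilde u_n,\tilde v_n)\|_{L^{2^*_s}\times L^{2^*_s}}$ is bounded below, the interpolation $\|\cdot\|_{L^{2^*_s}}\le C\|\cdot\|_{\dot H^s}^\theta\|\cdot\|_{L^{2,N-2s}}^{1-\theta}$ forces the Morrey norm to be bounded below, and one then picks $(x_n,r_n)$ near-maximizing the Morrey supremum, getting nontriviality of the rescaled weak limit directly from $L^2_{\mathrm{loc}}$ compactness on $B(0,1)$. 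You already flag the Appendix embeddings as the relevant tool, so this is only a difference of emphasis; both approaches work and the rest of your outline matches the paper step for step.
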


\begin{remark}\lab{r:psp}
{\rm From Proposition \ref{PSP}, we see that if $\{(u_n,v_n)\} $ is any nonnegative $(PS)$ sequence for $I_{f,g}$ at level $\ga$, then $\{(u_n,v_n)\} $ satisfies the $(PS)$ condition  if $\ga$ can not be decomposed as $\ga=I_{f,g}(u,v)+\sum_{i=1}^{k}I_{0,0}(\tilde u_i, \tilde v_i)$, where $k\geq 1$ and $(\tilde u_i,\tilde v_i)$ is a solution of \eqref{PSD2}.}
\end{remark}

\medskip

Before starting the proof of this proposition, we prove some lemmas which will be used in proving Proposition~\ref{PSP}.
\begin{lemma}\lab{BL}\cite[Theorem 2]{BL}
Let $j:\mathbb{C}\to\mathbb{C}$ be a continuous function with $j(0)=0$ and satisfy the following hypothesis that for every $\eps>0$, there exists two continuous functions $\varphi_\eps$ and $\psi_\eps$ such that
$$|j(\tilde a+\tilde b)-j(\tilde a)|\leq \eps\varphi_\eps(\tilde a)+\psi_{\eps}(\tilde b) \quad\forall\, \tilde a,\,\tilde b\in \mathbb{C}.$$ Further, let $f_n=f+g_n$ be a sequence of measurable functions from $\Rn$ to $\mathbb{C}$ such that

(i) $g_n\to 0$ a.e.

(ii) $j(f)\in L^1(\Rn)$.

(iii) $\int_{\Rn}\varphi_\eps\big(g_n(x)\big){\rm d}x\leq C<\infty$, for some constant $C$, independent of $\eps$ and $n$.

(iv) $\int_{\Rn}\psi_\eps(f(x)){\rm d}x<\infty$ for all $\eps>0$.

\noindent
Then $$\int_{\Rn}|j(f+g_n)-j(f)-j(g_n)|{\rm d}x\To 0, \quad\text{as}\quad n\to\infty.$$
\end{lemma}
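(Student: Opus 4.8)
The plan is to run the classical truncation argument of Brezis and Lieb \cite{BL}. Set $h_n(x):=|j(f(x)+g_n(x))-j(f(x))-j(g_n(x))|\ge 0$; we must show $\int_{\Rn}h_n\,{\rm d}x\to0$. Throughout we use that $\varphi_\eps,\psi_\eps\ge0$, which is part of the setting of \cite{BL} and holds in all applications (e.g.\ $\varphi_\eps(z)=|z|^{2^*_s}$). The only use of hypothesis (i) is pointwise: since $f$ and the $g_n$ are finite a.e.\ and $g_n\to0$ a.e., continuity of $j$ together with $j(0)=0$ gives $j(g_n(x))\to0$ and $j(f(x)+g_n(x))\to j(f(x))$ for a.e.\ $x$, whence $h_n(x)\to0$ for a.e.\ $x\in\Rn$.

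The key point is that no $L^1$-bound is available for $h_n$ itself; one obtains a dominating function only after subtracting the ``$\eps$-part''. Applying the structural inequality with $\tilde a=g_n(x)$, $\tilde b=f(x)$ gives $|j(f+g_n)-j(g_n)|\le\eps\,\varphi_\eps(g_n)+\psi_\eps(f)$, and hence pointwise
$$h_n\le|j(f+g_n)-j(g_n)|+|j(f)|\le\eps\,\varphi_\eps(g_n)+\psi_\eps(f)+|j(f)|.$$
Fix $\eps>0$ and put $W_{n,\eps}:=\big(h_n-\eps\,\varphi_\eps(g_n)\big)_+$. Then $0\le W_{n,\eps}\le\psi_\eps(f)+|j(f)|=:G_\eps$, and $G_\eps\in L^1(\Rn)$ by (ii) and (iv), with $G_\eps$ independent of $n$. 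Since $h_n\to0$ a.e.\ and $\varphi_\eps\ge0$ (so $\varphi_\eps(g_n)$ stays bounded below along the a.e.\ convergent sequence $g_n$), we also have $W_{n,\eps}\to0$ a.e.; by the dominated convergence theorem $\int_{\Rn}W_{n,\eps}\,{\rm d}x\to0$ as $n\to\infty$, for each fixed $\eps>0$.

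To finish, note $h_n\le W_{n,\eps}+\eps\,\varphi_\eps(g_n)$ pointwise, so by (iii)
$$\int_{\Rn}h_n\,{\rm d}x\le\int_{\Rn}W_{n,\eps}\,{\rm d}x+\eps\int_{\Rn}\varphi_\eps(g_n)\,{\rm d}x\le\int_{\Rn}W_{n,\eps}\,{\rm d}x+\eps\,C.$$
Letting $n\to\infty$ yields $\limsup_{n\to\infty}\int_{\Rn}h_n\,{\rm d}x\le\eps\,C$; since $C$ is independent of $\eps$ and $\eps>0$ is arbitrary, $\int_{\Rn}h_n\,{\rm d}x\to0$, which is the assertion. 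The main obstacle is precisely the ordering of the two limits just highlighted: one cannot apply dominated convergence to $h_n$ directly, and the uniform-in-$(\eps,n)$ bound (iii) is exactly what allows us to first send $n\to\infty$ at fixed $\eps$ and only afterwards let $\eps\to0$. The remaining points — measurability of $W_{n,\eps}$ and $G_\eps$ (compositions of the continuous maps $j,\varphi_\eps,\psi_\eps$ with measurable functions) and the harmless reduction to nonnegative $\varphi_\eps,\psi_\eps$ — are routine.
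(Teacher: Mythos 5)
Your argument is correct and is exactly the classical Brezis--Lieb truncation scheme: subtract the $\eps\varphi_\eps(g_n)$ part, dominate the positive remainder $W_{n,\eps}$ by the $n$-independent integrable function $\psi_\eps(f)+|j(f)|$, apply dominated convergence at fixed $\eps$, then use the uniform-in-$(\eps,n)$ bound (iii) to let $\eps\to0$. The paper itself states this lemma as a direct citation of \cite[Theorem 2]{BL} and supplies no proof, so there is nothing in the paper to compare against; your write-up faithfully reproduces the original Brezis--Lieb argument.
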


\begin{lemma}\lab{l:j}
Let $\al,\ba>1$. Then for every $\eps>0$, there exists $C_\eps>0$ such that
$$\big||x+a|^\al|y+b|^\ba-|x|^\al|y|^\ba\big|\leq \eps(|x|^{\al+\ba}+|y|^{\al+\ba})+C_\eps(|a|^{\al+\ba}+|b|^{\al+\ba})$$
holds for all $x,\, y,\, a,\, b \in \R$.
\end{lemma}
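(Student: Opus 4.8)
The plan is to reduce the two‑variable product estimate to an elementary one‑variable fact and then recombine the two factors additively.

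\textbf{Step 1 (scalar inequality).} First I would prove that for every $p>1$ and every $\eta>0$ there is $c_\eta>0$ with
$$\bigl|\,|s+t|^p-|s|^p\,\bigr|\le \eta|s|^p+c_\eta|t|^p\qquad\text{for all }s,t\in\R .$$
This is purely elementary via a case split on the size of $|t|$ relative to $|s|$: if $|t|\le\delta|s|$ with $0<\delta\le1$, then $(1-\delta)|s|\le|s+t|\le(1+\delta)|s|$, so the left‑hand side is $\le\max\{(1+\delta)^p-1,\,1-(1-\delta)^p\}\,|s|^p$, and this factor tends to $0$ as $\delta\to0$; if instead $|t|>\delta|s|$, then $|s|^p\le\delta^{-p}|t|^p$ and $|s+t|^p\le(1+\delta^{-1})^p|t|^p$, so the left‑hand side is $\le\bigl(\delta^{-p}+(1+\delta^{-1})^p\bigr)|t|^p$. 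Choosing $\delta$ so that the first factor is below $\eta$ gives the claim (in each case one of the two terms on the right is superfluous and kept only for uniformity). Alternatively, the mean value inequality $\bigl|\,|s+t|^p-|s|^p\,\bigr|\le p\,|t|\,(|s|+|t|)^{p-1}$ followed by Young's inequality with a parameter does the same job.

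\textbf{Step 2 (product splitting).} Writing $P=|x+a|^\al$, $P_0=|x|^\al$, $Q=|y+b|^\ba$, $Q_0=|y|^\ba$, I would use $PQ-P_0Q_0=(P-P_0)\,Q+P_0\,(Q-Q_0)$, so that
$$\bigl|\,|x+a|^\al|y+b|^\ba-|x|^\al|y|^\ba\,\bigr|\le |P-P_0|\,|y+b|^\ba+|x|^\al\,|Q-Q_0| .$$
Apply Step 1 to $|P-P_0|$ (with $p=\al$, $s=x$, $t=a$) and to $|Q-Q_0|$ (with $p=\ba$, $s=y$, $t=b$) for a small parameter $\eta$ to be fixed, and bound $|y+b|^\ba\le 2^{\ba-1}(|y|^\ba+|b|^\ba)$. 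Expanding, the right‑hand side becomes a finite combination, with explicit coefficients, of the four monomials $|x|^\al|y|^\ba$, $|x|^\al|b|^\ba$, $|a|^\al|y|^\ba$, $|a|^\al|b|^\ba$. The \emph{diagonal} monomial $|x|^\al|y|^\ba$ carries only a coefficient proportional to $\eta$, and $|x|^\al|y|^\ba\le|x|^{\al+\ba}+|y|^{\al+\ba}$ by Young's inequality, so it feeds only the $\eps$‑term; the \emph{purely new} monomial obeys $|a|^\al|b|^\ba\le|a|^{\al+\ba}+|b|^{\al+\ba}$, feeding the $C_\eps$‑term.

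\textbf{Main obstacle.} The genuinely delicate terms are the two mixed monomials $|a|^\al|y|^\ba$ and $|x|^\al|b|^\ba$: they couple a new variable with an old one and appear with the large constant produced by Step 1. A naive splitting $|a|^\al|y|^\ba\le|a|^{\al+\ba}+|y|^{\al+\ba}$ is not permissible, since it would put a large coefficient on $|y|^{\al+\ba}$, whereas the statement allows only an $\eps$‑coefficient there. The cure is Young's inequality with a parameter: $|a|^\al|y|^\ba\le \theta\,|y|^{\al+\ba}+C(\theta)\,|a|^{\al+\ba}$ for every $\theta>0$, and symmetrically $|x|^\al|b|^\ba\le \theta\,|x|^{\al+\ba}+C(\theta)\,|b|^{\al+\ba}$; taking $\theta$ small (depending on the Step 1 constant and on $\eps$) absorbs the $|x|^{\al+\ba}$ and $|y|^{\al+\ba}$ contributions into the $\eps$‑term at the price of a large but finite $C(\theta)$ on $|a|^{\al+\ba}$, $|b|^{\al+\ba}$, which only inflates $C_\eps$. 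Hence the order of the choices matters: given $\eps$, first fix $\eta$ (hence the Step 1 constant) so that all $\eta$‑type coefficients are $\le\eps$, then fix the Young parameters in terms of that constant and $\eps$; summing the surviving coefficients of $|a|^{\al+\ba}$ and $|b|^{\al+\ba}$ defines $C_\eps$. Finally I would record that this inequality is exactly the hypothesis on $j$ required to invoke Lemma~\ref{BL}, with $\tilde a=(x,y)$, $\tilde b=(a,b)$, $\varphi_\eps(x,y)=|x|^{\al+\ba}+|y|^{\al+\ba}$ and $\psi_\eps(a,b)=C_\eps(|a|^{\al+\ba}+|b|^{\al+\ba})$. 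A shorter, non‑quantitative route: the inequality is homogeneous of degree $\al+\ba$ in $(x,y,a,b)$, so if it failed for some $\eps_0$ one could normalize $|x|^{\al+\ba}+|y|^{\al+\ba}+|a|^{\al+\ba}+|b|^{\al+\ba}=1$, force $a,b\to0$ along a subsequence, and obtain a contradiction from the continuity of $(x,y)\mapsto|x|^\al|y|^\ba$.
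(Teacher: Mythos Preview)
Your proposal is correct and follows essentially the same route as the paper: the same product splitting $PQ-P_0Q_0=(P-P_0)Q+P_0(Q-Q_0)$, the same scalar inequality $\bigl||s+t|^p-|s|^p\bigr|\le\eta|s|^p+c_\eta|t|^p$, the same convexity bound $|y+b|^\ba\le 2^{\ba-1}(|y|^\ba+|b|^\ba)$, and the same final use of Young's inequality with a parameter to handle the mixed monomials. Your write-up is in fact more explicit than the paper's about the order of choices (first $\eta$, then the Young parameters), and your closing remark linking the inequality to the hypothesis of Lemma~\ref{BL} is exactly how the paper uses it in the next lemma.
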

\begin{proof}
Let $\eps>0$ be arbitrary. Then there exists $C_\eps>0$ such that
\Bea
|x+a|^\al|y+b|^\ba-|x|^\al|y|^\ba &=&|y+b|^\ba(|x+a|^\al-|x|^\al)+|x|^\al(|y+b|^\ba-|y|^\ba)\\
&\leq&2^{\ba-1}( |y|^\ba+|b|^\ba)\bigg(\frac{\eps/2}{2^\ba-1}|x|^\al+C_\eps|a|^\al\bigg)+|x|^\al\bigg(\frac{\eps}{2}|y|^\ba+C_\eps|b|^\ba\bigg)\\
&\leq&\eps\bigg(|x|^\al|y|^\ba+\frac{1}{2}|b|^\ba|x|^\al\bigg)+C_\eps(|x|^\al|b|^\ba+|y|^\ba|a|^\al+|a|^\al|b|^\ba)\\
&\leq& \eps(|x|^{\al+\ba}+|y|^{\al+\ba})+C'_\eps(|a|^{\al+\ba}+|b|^{\al+\ba}),
\Eea
where in the last inequality we have used Young's inequality with  different $\eps$. This completes the proof.
\end{proof}

\begin{lemma}\lab{l:BL} If $u_n\deb u$ and $v_n\deb v$ in $\dot{H}^s(\Rn)$. Then
$$\int_{\Rn}\bigg(|u_n|^{\alpha}|v_n|^\beta-|u|^{\alpha}|v|^\beta-|u_n-u|^{\alpha}|v_n-v|^\beta\bigg){\rm d}x=o(1).$$
\end{lemma}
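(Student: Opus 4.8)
The plan is to apply the Brezis--Lieb type result of Lemma~\ref{BL} with the choice $j(z_1,z_2) = |z_1|^\al |z_2|^\ba$ viewed as a function on $\mathbb{C}^2 \cong \mathbb{R}^4$ (or, after the usual embedding, on $\mathbb{C}$ after identifying the pair appropriately; in any case the two-variable version of \cite[Theorem 2]{BL} applies verbatim). First I would set $f = (u,v)$ and $g_n = (u_n - u, v_n - v)$, so that $f_n := f + g_n = (u_n, v_n)$. Since $u_n \deb u$ and $v_n \deb v$ in $\dot H^s(\Rn)$, by the Rellich--Kondrachov compactness on bounded domains together with a diagonal argument we may pass to a further subsequence so that $u_n \to u$ and $v_n \to v$ a.e.\ in $\Rn$; hence $g_n \to 0$ a.e., which is hypothesis (i).

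Next I would verify the structural hypothesis on $j$. This is exactly the content of Lemma~\ref{l:j}: for every $\eps > 0$ there is $C_\eps > 0$ with
$$\big|j(f + g_n) - j(f)\big| = \big||u + (u_n-u)|^\al |v + (v_n-v)|^\ba - |u|^\al |v|^\ba\big| \leq \eps\,\varphi_\eps(g_n) + \psi_\eps(f),$$
where $\varphi_\eps(a,b) := |a|^{\al+\ba} + |b|^{\al+\ba}$ (note this is actually independent of $\eps$, which is even better) and $\psi_\eps(a,b) := C_\eps\big(|a|^{\al+\ba} + |b|^{\al+\ba}\big)$. Both are continuous and vanish at the origin. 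For hypothesis (ii), $j(f) = |u|^\al |v|^\ba \in L^1(\Rn)$ by Hölder's inequality since $u, v \in L^{2^*_s}(\Rn)$ and $\al + \ba = 2^*_s$. For hypothesis (iii), $\int_{\Rn} \varphi_\eps(g_n) = \|u_n - u\|_{L^{2^*_s}}^{2^*_s} + \|v_n - v\|_{L^{2^*_s}}^{2^*_s}$, which is uniformly bounded because weakly convergent sequences in $\dot H^s(\Rn)$ are bounded and $\dot H^s \hookrightarrow L^{2^*_s}$; and this bound is plainly independent of $\eps$. For hypothesis (iv), $\int_{\Rn} \psi_\eps(f) = C_\eps\big(\|u\|_{L^{2^*_s}}^{2^*_s} + \|v\|_{L^{2^*_s}}^{2^*_s}\big) < \infty$ for each fixed $\eps$.

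With all four hypotheses of Lemma~\ref{BL} in place, the conclusion gives
$$\int_{\Rn} \big||u_n|^\al |v_n|^\ba - |u|^\al |v|^\ba - |u_n - u|^\al |v_n - v|^\ba\big|\,{\rm d}x \to 0,$$
which is the claimed identity with the $o(1)$ understood in $L^1$ as $n \to \infty$. The main (and really the only) obstacle is the bookkeeping to cast the scalar statement of \cite[Theorem 2]{BL} into the two-component setting and to confirm that Lemma~\ref{l:j} matches the required form of the inequality $|j(\tilde a + \tilde b) - j(\tilde a)| \leq \eps \varphi_\eps(\tilde a) + \psi_\eps(\tilde b)$; once one observes that Lemma~\ref{l:j} is precisely that inequality with $\tilde a = f$, $\tilde b = g_n$, everything else is a routine verification using boundedness of $\{(u_n,v_n)\}$ and the critical Sobolev embedding. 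I would also remark that the a.e.\ convergence step tacitly uses that one only needs the conclusion up to subsequences, which is all that is needed in the application to Proposition~\ref{PSP}.
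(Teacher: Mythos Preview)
Your argument is correct and follows exactly the same route as the paper: apply the Brezis--Lieb result (Lemma~\ref{BL}) with $j(x,y)=|x|^{\alpha}|y|^{\beta}$, $f=(u,v)$, $g_n=(u_n-u,v_n-v)$, and invoke Lemma~\ref{l:j} to verify the structural hypothesis on $j$; the paper simply states this without spelling out the checks of (i)--(iv), which you do carefully. One harmless slip: in your displayed inequality the roles of $f$ and $g_n$ are interchanged relative to Lemma~\ref{l:j} (which gives $\eps$ on the $(x,y)$ side and $C_\eps$ on the $(a,b)$ side), but since the hypothesis of Lemma~\ref{BL} is for all $\tilde a,\tilde b$ and your subsequent verification of (iii) and (iv) is done with the correct $\varphi_\eps,\psi_\eps$, this does not affect the validity.
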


\begin{proof}
Define $j:\R^2\to\R$ defined by $j(x,y)=|x|^\alpha|y|^\beta$. Then $j$ satisfies the  hypothesis of Lemma~\ref{BL}. Next considering $$f_n:=(u_n, v_n), \quad f=(u, v), \quad g_n=(u_n-u, v_n-v),$$
we see that all the hypothesis of Lemma~\ref{BL} are satisfied. Hence the lemma follows from Lemma~\ref{BL}.
\end{proof}

\begin{lemma}\label{L2-2}
			Let $\{(u_n,v_n)\}$ weakly converge to $(u,v)$ in $\Hs\times\Hs$ and pointwise a.e. in $\R^N\times\R^N$, then
			\bea
			\int_{\Rn}|u_n|^{\al-2}u_n|v_n|^{\ba}\phi\;{\rm d}x &\longrightarrow& \int_{\Rn}|u|^{\al-2}u|v|^{\ba}\phi\;{\rm d}x \quad \text{as}\;\; n\to\infty,\label{PDP3}\\
			\text{and }\int_{\Rn}|u_n|^{\al}|v_n|^{\ba-2}v_n\psi\;{\rm d}x &\longrightarrow& \int_{\Rn}|u|^{\al}|v|^{\ba-2}v\psi\;{\rm d}x \quad \text{as}\;\; n\to\infty,\label{PDP4}
			\eea
			for all $(\phi,\psi)\in\Hs\times\Hs.$
		\end{lemma}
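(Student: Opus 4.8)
The statement is a classical ``Brezis--Lieb type'' compactness result for the nonlinear terms, and the natural strategy is to combine weak convergence with pointwise convergence and a density/truncation argument. First I would establish the $L^q$ bounds that make everything integrable: since $(u_n,v_n)\rightharpoonup(u,v)$ in $\dot H^s\times\dot H^s$, the sequence is bounded in $\dot H^s\times\dot H^s$, hence bounded in $L^{2^*_s}(\R^N)\times L^{2^*_s}(\R^N)$ by the Sobolev embedding; in particular $\{|u_n|^{\al-1}|v_n|^{\ba}\}$ is bounded in $L^{(2^*_s)'}(\R^N)$ because $(\al-1)+\ba=2^*_s-1$ and $(2^*_s-1)\cdot(2^*_s)'=2^*_s$. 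Likewise the test function $\phi\in\dot H^s$ lies in $L^{2^*_s}(\R^N)$, so the integrals in \eqref{PDP3}--\eqref{PDP4} are all well defined and finite.

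\smallskip

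\textbf{Main step.} The key observation is that $|u_n|^{\al-2}u_n|v_n|^{\ba}\to|u|^{\al-2}u|v|^{\ba}$ pointwise a.e. (by continuity of the map $(x,y)\mapsto|x|^{\al-2}x|y|^\ba$ and the assumed a.e.\ convergence), and that this sequence is bounded in $L^{(2^*_s)'}(\R^N)$ by the previous step; therefore it converges weakly in $L^{(2^*_s)'}(\R^N)$, and the weak limit must be the pointwise limit $|u|^{\al-2}u|v|^\ba$ (a bounded sequence in a reflexive $L^q$ space that converges a.e.\ converges weakly to its a.e.\ limit). Testing this weak convergence against the fixed function $\phi\in L^{2^*_s}(\R^N)=\big(L^{(2^*_s)'}(\R^N)\big)'$ gives exactly \eqref{PDP3}. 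The argument for \eqref{PDP4} is identical with the roles of $\al,\ba$ and $u,v$ interchanged, using $\al+(\ba-1)=2^*_s-1$.

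\smallskip

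\textbf{Where the work is.} There are two small technical points to handle carefully. The first is justifying the step ``bounded in $L^q$ plus a.e.\ convergence implies weak $L^q$ convergence to the a.e.\ limit'': this is standard but relies on $1<q<\infty$ (reflexivity), which holds here since $q=(2^*_s)'\in(1,2)$; alternatively one can invoke the Br\'ezis--Lieb lemma directly (Lemma~\ref{BL}) applied to $j(x,y)=|x|^{\al-2}x|y|^\ba$ together with an a.e.-convergence/Vitali argument, but the reflexivity route is cleaner. The second point, if one wants to avoid the abstract weak-compactness argument, is to split $\int_{\R^N}=\int_{B_R}+\int_{\R^N\setminus B_R}$: on $B_R$ one uses Rellich-type local compactness of $\dot H^s\hookrightarrow L^p_{\mathrm{loc}}$ for $p<2^*_s$ together with the uniform integrability of $\{|u_n|^{\al-1}|v_n|^\ba\phi\}$ (which follows from the $L^{2^*_s}$-bounds via H\"older), while the tail $\int_{\R^N\setminus B_R}$ is made uniformly small by H\"older's inequality and the fact that $\phi\in L^{2^*_s}(\R^N)$ has small norm outside large balls. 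I expect the only genuine obstacle to be bookkeeping the exponents so that every product lands in the right Lebesgue space; once the H\"older triple $\big(\tfrac{2^*_s}{\al-1},\tfrac{2^*_s}{\ba},2^*_s\big)$ (resp.\ $\big(\tfrac{2^*_s}{\al},\tfrac{2^*_s}{\ba-1},2^*_s\big)$) is identified, the convergence is automatic.
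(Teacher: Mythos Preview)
Your proposal is correct. Both routes you sketch work, and in fact your ``alternative'' route (split $\R^N = B_R \cup B_R^c$, control the tail via H\"older using $\|\phi\|_{L^{2^*_s}(B_R^c)}\to 0$, and pass to the limit on $B_R$ by equi-integrability and Vitali) is exactly the argument the paper gives.

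Your \emph{main} route --- observing that $|u_n|^{\al-2}u_n|v_n|^\ba$ is bounded in $L^{(2^*_s)'}$ and converges a.e., hence converges weakly in $L^{(2^*_s)'}$, and then pairing with $\phi\in L^{2^*_s}$ --- is genuinely different and somewhat more streamlined: it packages the tail control and the local passage to the limit into a single abstract fact (bounded in reflexive $L^q$ plus a.e.\ convergence implies weak convergence to the a.e.\ limit), so you never have to introduce the radius $R$ or invoke Vitali explicitly. The paper's approach, by contrast, is more hands-on and makes the $\varepsilon/2$ mechanism visible, which some readers may prefer. Either way the exponent bookkeeping is identical, via the H\"older triple $\big(\tfrac{2^*_s}{\al-1},\tfrac{2^*_s}{\ba},2^*_s\big)$ you identified.
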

		\begin{proof}
Set $$M:=\max\bigg\{\|u_n\|_{L^{2^*_s}(\Rn)}^{\al-1}, \, \|v_n\|_{L^{2^*_s}(\Rn)}^{\ba}\, \|u\|_{L^{2^*_s}(\Rn)}^{\al-1}\, \|v\|_{L^{2^*_s}(\Rn)}^{\ba}\bigg\}.$$
Using the Sobolev inequality we see that $M$ is well-defined. Let $\phi\in \dot{H}^s(\Rn)$ and $\eps>0$ be arbitrary.  Then, there exists $R=R(\eps)>0$ such that $\big(\int_{B(0,R)^c}|\phi|^{2^*_s}{\rm d}x\big)^\frac{1}{2^*_s}<\frac{\eps}{2M^2}$.
Note that,				
$$\int_{\Rn}\big(|u_n|^{\al-2}u_n|v_n|^{\ba}-|u|^{\al-2}u|v|^{\ba}\big)\phi\;{\rm d}x=\bigg(\int_{B(0,R)} +\int_{B(0,R)^c}\bigg)\big(|u_n|^{\al-2}u_n|v_n|^{\ba}-|u|^{\al-2}u|v|^{\ba}\big)\phi$$
and using H\"{o}lder inequality
\Bea
&&\int_{B(0,R)^c}\big(|u_n|^{\al-2}u_n|v_n|^{\ba}-|u|^{\al-2}u|v|^{\ba}\big)\phi\;{\rm d}x\\
 &\leq& \bigg(\int_{\Rn}|u_n|^{2^*_s}{\rm d}x\bigg)^{(\al-1)/2^*_s} \bigg(\int_{\Rn}|v_n|^{2^*_s}{\rm d}x\bigg)^{\ba/2^*_s} \bigg(\int_{B(0,R)^c}|\phi|^{2^*_s}{\rm d}x\bigg)^{1/2^*_s}\\
&&\quad+\bigg(\int_{\Rn}|u|^{2^*_s}{\rm d}x\bigg)^{(\al-1)/2^*_s} \bigg(\int_{\Rn}|v|^{2^*_s}{\rm d}x\bigg)^{\ba/2^*_s} \bigg(\int_{B(0,R)^c}|\phi|^{2^*_s}{\rm d}x\bigg)^{1/2^*_s}\\
 &<&\eps.
\Eea			
On the other hand, using H\"{o}lder inequality as above, it is also easily checked that $\displaystyle \big(|u_n|^{\al-2}u_n|v_n|^{\ba}-|u|^{\al-2}u|v|^{\ba}\big)\phi$ is equi-integrable in $B(0,R)$. Therefore, applying Vitaly's convergence theorem it follows that
 $$\lim_{n\to\infty}\int_{B(0,R)}\big(|u_n|^{\al-2}u_n|v_n|^{\ba}-|u|^{\al-2}u|v|^{\ba}\big)\phi\;{\rm d}x=0.$$
Hence the lemma follows.					
\end{proof}	

{\bf Proof of Proposition \ref{PSP}:}

\begin{proof}
      We divide the proof into few steps.\\

      \underline{\bf Step 1:} Using standard arguments it follows that $(PS)$ sequences for $I_{f,g}$ are bounded in $\Hs\times \Hs$. More precisely, as $n\to\infty$
      \begin{eqnarray*}
\ga+o(1)+o(1)\|(u_n,v_n)\|_{\dot{H}^s\times \dot{H}^s}&\geq& I_{f,g}(u_n,v_n) \, - \,
      \frac{1}{2_s^*} \prescript{}{(\dot{H}^s\times\dot{H}^s)'}{\langle} I'_{f,g}(u_n,v_n), (u_n,v_n){\rangle}_{\dot{H}^s\times \dot{H}^s} \\
     &\geq&\bigg(\frac{1}{2}-\frac{1}{2_s^*}\bigg)\|(u_n,v_n)\|_{\dot{H}^s\times\dot{H}^s}^{2}\\
       &&\qquad - \left(1- \frac{1}{2_s^*} \right)\left(\|f\|_{\hms}\|u_n\|_{\dot{H}^s}+\|g\|_{\hms}\|v_n\|_{\dot{H}^s}\right)\\
       &\geq&\bigg(\frac{1}{2}-\frac{1}{2_s^*}\bigg)\|(u_n,v_n)\|_{\dot{H}^s\times\dot{H}^s}^{2}\\
       &&\qquad-  \left(1- \frac{1}{2_s^*} \right)(\|f\|_{\hms}+\|g\|_{\hms})\|(u_n,v_n)\|_{\dot{H}^s\times \dot{H}^s}.
      \end{eqnarray*}
    This immediately implies that $\{(u_n,v_n)\}$ is bounded in $\Hs\times\Hs$. Consequently, up to a subsequence, $ (u_n,v_n) \rightharpoonup (u,v)$ in $\Hs\times\Hs$.  Further,  $\prescript{}{\hhms}{\langle}I_{f,g}'(u_n,v_n), (\phi,\psi){\rangle}_{\dot{H}^s\times\dot{H}^s}\rightarrow 0 $ implies
\Bea
&&\big\langle (u_n,v_n), (\phi,\psi)\big\rangle_{\dot{H}^s\times\dot{H}^s}- \frac{\al}{2^*_s}\int_{\R{^N}}|u_n|^{\al-2}u_n|v_n|^{\ba}\phi\,{\rm d}x-\frac{\ba}{2^*_s}\int_{\R{^N}}|v_n|^{\ba-2}v_n|u_n|^{\al}\psi\,{\rm d}x\\
&&\qquad\qquad\qquad-\prescript{}{(\dot{H}^s)'}{\langle}f,\phi{\rangle}_{\dot{H}^s} -\prescript{}{(\dot{H}^s)'}{\langle}g,\psi{\rangle}_{\dot{H}^s}=o(1).\Eea
Passing to the limit using Lemma~\ref{L2-2}, we see that  $(u,v)$ satisfies \eqref{MAT1} without signed restrictions.

\medskip

	\underline{\bf Step 2:} In this step we show that $\{(u_n-u,v_n-v)\}$ is a $(PS)$ sequence for $I_{0,0}$ at the level
$\ga-I_{f,g}(u,v)$

\medskip

To see this, first we observe that as $n\to\infty$
\be
\|u_n-u\|_{\dot{H}^s}=\|u_n\|_{\dot{H}^s}^2-\|u\|_{\dot{H}^s}^2+o(1), \quad
\|v_n-v\|_{\dot{H}^s}^2 = \|v_n\|_{\dot{H}^s}^2-\|v\|_{\dot{H}^s}^2+o(1).\no
\ee
Using this along with the fact that $(u_n,v_n)\rightharpoonup (u,v)$, $f,\;g\in (\dot{H}^s(\Rn))'$ and Lemma~\ref{l:BL} yields
  \Bea
      I_{0,0}(u_n-u,v_n-v)
     &=& \frac{1}{2}\|(u_n-u,v_n-v)\|_{\phs}^2 -\frac{1}{2_s^*}\int_{\Rn}|u_n-u|^{\al}|v_n-v|^{\ba}{\rm d}x\no\\
     &=&\frac{1}{2}(\|u_n\|_{\dot{H}^s}^2-\|u\|_{\dot{H}^s}^2)+\frac{1}{2}(\|v_n\|_{\dot{H}^s}^2-\|v\|_{\dot{H}^s}^2)
     -\frac{1}{2_s^*}\int_{\Rn}|u_n|^{\al}|v_n|^{\ba}{\rm d}x\\
     &&+\frac{1}{2_s^*}\int_{\Rn}|u_n|^{\al}|v_n|^{\ba}{\rm d}x
    -\prescript{}{\hms}{\langle}f, u_n{\rangle}_{\dot{H}^s}-\prescript{}{\hms}{\langle}g, v_n{\rangle}_{\dot{H}^s}\\
    &&+\prescript{}{\hms}{\langle}f, u{\rangle}_{\dot{H}^s}+
     \prescript{}{\hms}{\langle}g, v{\rangle}_{\dot{H}^s}\\
     &&+\frac{1}{2_s^*}\int_{\Rn}\left\{|u_n|^{\al}|v_n|^{\ba}-|u|^{\al}|v|^{\ba}-|u_n-u|^{\al}|v_n-v|^{\ba}\right\}\;{\rm d}x+o(1)\no\\
     &=&I_{f,g}(u_n,v_n)-I_{f,g}(u,v)+o(1).
     \Eea
  Next,  as $(u_n-u, v_n-v)\deb (0,0)$ in $\Hs\times\Hs$, applying Lemma~\ref{L2-2}, we obtain
\bea
     &\;&\prescript{}{\hhms}{\langle} I'_{0,0}(u_n-u,v_n-v),\, (\phi,\psi){\rangle}_{\phs}\no\\
      &=& \langle (u_n-u,v_n-v),(\phi,\psi)\rangle_{\phs}-\frac{\al}{2_s^*}\int_{\Rn}|u_n-u|^{\al-2}(u_n-u)|v_n-v|^{\ba}\phi\, {\rm d}x\no\\ &\;&\qquad\qquad-\frac{\ba}{2_s^*}\int_{\Rn}|u_n-u|^{\al}|v_n-v|^{\ba-2}(v_n-v)\psi\, {\rm d}x\no\\
       &=&o(1).
  \eea
  This completes Step 2.

 \medskip

\underline{\bf Step 3:}  Rescaling of $\{(u_n,v_n)\}_n$ in the nontrivial case.

If $(u_n,v_n)\to (u,v) \mbox{ in } \Hs\times\Hs$, then the theorem is proved with $k=0$. Therefore, we assume $(u_n,v_n)\not\to (u,v) \mbox{ in } \Hs\times\Hs$. Set, $$\tilde u_n:=u_n-u, \quad \tilde v_n:=v_n-v. $$
Therefore, we are in the case where $(\tilde u_n, \tilde v_n)\not\To(0,0)$ in $\Hs\times\Hs$.  Since, by Step 2, $\{(\tilde u_n, \tilde v_n)\}$ is a bounded $(PS)$ sequence for $I_{0,0}$, we have $I'_{0,0}(\tilde u_n, \tilde v_n)(\tilde u_n, \tilde v_n)=o(1)$. Therefore, up to a subsequence
$$0<\|(\tilde u_n, \tilde v_n)\|^2_{\phs}=\int_{\Rn}|\tilde u_n|^\al |\tilde v_n|^\ba{\rm d}x\leq\int_{\Rn}|\tilde u_n|^{2^*_s}{\rm d}x+\int_{\Rn}|\tilde v_n|^{2^*_s}{\rm d}x\leq\|(\tilde u_n, \tilde v_n)\|_{L^{2^*_s}\times L^{2^*_s}}^{2^*_s}.
$$
Thus $(\tilde u_n, \tilde v_n)\not\To(0,0)$ in $L^{2^*_s}(\Rn)\times L^{2^*_s}(\Rn)$.
Consequently, $$\inf_{n}\|(\tilde u_n, \tilde v_n)\|_{L^{2^*_s}\times L^{2^*_s}}\geq \de\quad\text{for some}\quad \de>0.$$
Hence, applying Lemma~\ref{l:12-13-1},
$$\de\leq C\|(\tilde u_n,\tilde v_n)\|_{\dot{H}^s\times \dot{H}^s}^\theta\|(\tilde u_n,\tilde v_n)\|^{1-\theta}_{L^{2, (N-2s)}\times L^{2, (N-2s)}}\leq C'\|(\tilde u_n,\tilde v_n)\|^{1-\theta}_{L^{2, (N-2s)}\times L^{2, (N-2s)}},$$
that is, $$\|(\tilde u_n,\tilde v_n)\|_{L^{2, (N-2s)}\times L^{2, (N-2s)}}\geq C_1 \quad\text{for some }\,\, C_1>0.$$
Comparing the above inequality with \eqref{3-9-1} yields existence of some $\bar C>0$ such that
$$\bar C\leq\|(\tilde u_n,\tilde v_n)\|_{L^{2, (N-2s)}\times L^{2, (N-2s)}}^2\leq \bar C^{-1},$$
that is
$$\bar C\leq\sup_{x\in\Rn,\, R>0} R^{N-2s}\fint_{B(x,R)}\big(|\tilde u_n|^2+|\tilde v_n|^2\big){\rm d}y\leq \bar C^{-1}.$$
As a result, for every $n\in \N$, there exists $x_n\in\Rn$ and $r_n>0$ such that
\be\lab{3-9-3}
r_n^{N-2s}\fint_{B(x_n,r_n)}\big(|\tilde u_n|^2+|\tilde v_n|^2\big){\rm d}y\geq \|(\tilde u_n,\tilde v_n)\|_{L^{2, (N-2s)}\times L^{2, (N-2s)}}-\frac{\bar C}{2n}\geq \frac{\bar C}{2}>0.
\ee
Now define
$$\tilde{\tilde{u}}_n := r_n^{\frac{N-2s}{2}}\tilde u_n(r_nx+x_n), \quad
\tilde{\tilde{v}}_n := r_n^{\frac{N-2s}{2}}\tilde v_n(r_nx+x_n).$$
In view of the scaling invariance of the $\Hs$ norm and $L^{2^*_s}(\Rn)$ norm, $\{(\tilde{\tilde{u}}_n, \tilde{\tilde{v}}_n )\}$ is a bounded sequence in $\Hs\times\Hs$ and  up to a subsequence
$$(\tilde{\tilde{u}}_n, \tilde{\tilde{v}}_n )\deb (\tilde u, \tilde v) \quad\text{in}\, \Hs\times\Hs\quad\text{and}\quad (\tilde{\tilde{u}}_n, \tilde{\tilde{v}}_n )\To (\tilde u, \tilde v) \quad\text{in}\, L^2_{loc}(\Rn)\times L^2_{loc}(\Rn).$$
Therefore, using change of variable, we observe from \eqref{3-9-3} that

$$0<r_n^{-2s}\int_{B(x_n,r_n)}\big(|\tilde u_n|^2+|\tilde v_n|^2\big){\rm d}y=\int_{B(0,1)}
\big(|\tilde{\tilde{u}}_n(z)|^2+|\tilde{\tilde{v}}_n(z)|^2\big){\rm d}z
\To\int_{B(0,1)}(|\tilde u|^2+|\tilde v|^2){\rm d}x.$$
Hence $(\tilde u, \tilde v)\neq (0,0)$. Clearly, up to a subsequence, either $x_n\to x_0\in\Rn$ or $|x_n|\to\infty$. Further, as
$(\tilde{\tilde{u}}_n, \tilde{\tilde{v}}_n )\deb (\tilde u, \tilde v)\neq (0,0)$ in $\Hs\times\Hs$ and $({\tilde{u}}_n, {\tilde{v}}_n )\deb (0,0)$ in $\Hs\times\Hs$, we infer that
 $r_n\to 0$.

\medskip

\underline{\bf Step 4:} In this step we prove that $(\tilde u,\tilde v)$ solves
\begin{equation}\label{PSD5}
	\left\{\begin{aligned}
		&(-\Delta)^s\tilde u = \frac{\al}{2_s^*}|\tilde u|^{\al-2}\tilde{u}|\tilde{v}|^{\ba}\,\, \text{ in }\,\Rn,\\
		&(-\Delta)^s\tilde{v} = \frac{\ba}{2_s^*}|\tilde{u}|^{\al}|\tilde{v}|^{\ba-2}\tilde{v}\,\,\text{ in }\, \Rn.\\
	\end{aligned}
	\right.
\end{equation}
To this aim, it is enough to show that for arbitrary $(\va,\psi)\in C_c^\infty(\Rn)\times C_c^\infty(\Rn)$ it holds
$$\langle \tilde u,\va\rangle_{\dot{H}^s}+\langle \tilde v,\psi\rangle_{\dot{H}^s}=\frac{\al}{2^*_s}\int_{\Rn}|\tilde u|^{\al-2}\tilde{u}|\tilde{v}|^{\ba}\va\,{\rm d}x+\frac{\ba}{2_s^*}\int_{\Rn}|\tilde{u}|^{\al}|\tilde{v}|^{\ba-2}\tilde{v}\psi\,{\rm d}x.$$
Let $\varphi, \psi\in C^\infty_c(\Rn)$ be arbitrary. As $(\tilde{\tilde{u}}_n, \tilde{\tilde{v}}_n )\deb (\tilde u, \tilde v)$ in $\Hs\times\Hs$, using change of variables and Step 2, that is $\{(\tilde u_n, \tilde v_n)\}$ is a $(PS)$ sequence for $I_{0,0}$, we deduce

\begin{align}
	\langle\tilde u,\, \varphi \rangle_{\dot{H}^s}+
	\langle\tilde v,\, \psi \rangle_{\dot{H}^s} &=\lim_{n\to\infty}\big(\langle\tilde{\tilde{u}}_n, \va\rangle_{\dot{H}^s}+
	\langle\tilde{\tilde{v}}_n,\, \psi \rangle_{\dot{H}^s}\big)\no\\
	&=\lim_{n\to\infty}\iint_{\R^{2N}}\frac{r_n^\frac{N-2s}{2}\big(\tilde u_n(r_nx+x_n)-\tilde u_n(r_ny+x_n)\big)
\big(\varphi(x)-\varphi(y)\big)}{|x-y|^{N+2s}}{\rm d}x{\rm d}y\no\\
&\quad+\lim_{n\to\infty}\iint_{\R^{2N}}\frac{r_n^\frac{N-2s}{2}\big(\tilde v_n(r_nx+x_n)-\tilde v_n(r_ny+x_n)\big)
\big(\psi(x)-\psi(y)\big)}{|x-y|^{N+2s}}{\rm d}x{\rm d}y\no\\
&=\lim_{n\to\infty}\iint_{\R^{2N}}\frac{r_n^{-\frac{N-2s}{2}}\big(\tilde u_n(x)-\tilde u_n(y)\big)\big(\varphi\big(\dfrac{x-x_n}{r_n}\big)
-\varphi\big(\dfrac{y-x_n}{r_n}\big)\big)}{|x-y|^{N+2s}}{\rm d}x{\rm d}y\no\\
&\quad+\lim_{n\to\infty}\iint_{\R^{2N}}\frac{r_n^{-\frac{N-2s}{2}}\big(\tilde v_n(x)-\tilde v_n(y)\big)\big(\psi\big(\dfrac{x-x_n}{r_n}\big)
-\psi\big(\dfrac{y-x_n}{r_n}\big)\big)}{|x-y|^{N+2s}}{\rm d}x{\rm d}y\no\\
&=\lim_{n\to\infty}\bigg[\frac{\al}{2^*_s}\int_{\Rn}|\tilde u_n|^{\al-2}\tilde{u_n}|\tilde{v_n}|^{\ba}\tilde \va_n{\rm d}x+\frac{\ba}{2_s^*}\int_{\Rn}|\tilde{u_n}|^{\al}|\tilde{v_n}|^{\ba-2}\tilde{v_n}\tilde \psi_n{\rm d}x\bigg], \lab{3-9-5}
\end{align}
where $$\tilde \va_n(x):=r_n^{-\frac{N-2s}{2}}\va\big(\dfrac{x-x_n}{r_n}\big)\quad\text{and}\quad \tilde \psi_n(x):=r_n^{-\frac{N-2s}{2}}\psi\big(\dfrac{x-x_n}{r_n}\big).$$
Again applying change of variable to \eqref{3-9-5} yields us
\Bea
\text{RHS of \eqref{3-9-5}}&=&\lim_{n\to\infty}\bigg[\frac{\al}{2^*_s}\int_{\Rn}|\tilde{\tilde{u}}_n|^{\al-2}\tilde{\tilde{u}}_n|\tilde{\tilde{v}}_n|^{\ba}\va\,{\rm d}x+\frac{\ba}{2_s^*}\int_{\Rn}|\tilde{\tilde{u}}_n|^{\al}|\tilde{\tilde{v}}_n|^{\ba-2}\tilde{\tilde{v}}_n\psi\,{\rm d}x\bigg]\\
&=&\frac{\al}{2^*_s}\int_{\Rn}|\tilde u|^{\al-2}\tilde{u}|\tilde{v}|^{\ba}\va\,{\rm d}x+\frac{\ba}{2_s^*}\int_{\Rn}|\tilde{u}|^{\al}|\tilde{v}|^{\ba-2}\tilde{v}\psi\,{\rm d}x,
\Eea
where the last equality is obtained by Lemma~\ref{L2-2}. This completes Step 4.

\medskip

Now define,
\be\lab{4-9-1}w_n(x) := \tilde u_n(x)-r_n^{-\frac{N-2s}{2}}\tilde{u}\left(\frac{x-x_n}{r_n}\right) \quad\text{and}\quad z_n(x) := \tilde v_n(x)-r_n^{-\frac{N-2s}{2}}\tilde{v}\left(\frac{x-x_n}{r_n}\right).\ee

\medskip

\underline{\bf Step 5:}
In this step we show that $\{(w_n,z_n)\}$ is a $(PS)$ sequence for $ I_{0,0}$ at the level $\ga-I_{f,g}(u,v)-I_{0,0}(\tilde u,\tilde v)$.

To prove that, first we set
\be\lab{4-9-2}\tilde{w}_n := r_n^{\frac{N-2s}{2}}w_n(r_nx+x_n), \quad\text{and}\quad
\tilde{z}_n := r_n^{\frac{N-2s}{2}}z_n(r_nx+x_n).\ee
Combining \eqref{4-9-1} and \eqref{4-9-2} yields
 $$\tilde{w}_n=\tilde{\tilde{u}}_n-\tilde u, \quad \tilde{z}_n=\tilde{\tilde{v}}_n-\tilde v,$$
and from the scaling invariance in the norm of $\Hs\times\Hs$ gives
$$\|(w_n, z_n)\|_{\phs}=\|(\tilde w_n, \tilde z_n)\|_{\phs}=\|(\tilde{\tilde{u}}_n-\tilde u, \tilde{\tilde{v}}_n-\tilde{v})\|_{\phs}.$$
A straight forward computation using  the above equality, change of variables and Lemma~\ref{l:BL} yields
\begin{align*}
I_{0,0}(w_n,z_n)&=\frac{1}{2}\|\tilde{\tilde{u}}_n-\tilde u\|^2_{\dot{H}^{s}}
+\frac{1}{2}\|\tilde{\tilde{v}}_n-\tilde{v}\|^2_{\dot{H}^{s}}
-\frac{1}{2^*_s}\int_{\Rn}|w_n|^{\al}|z_n|^{\ba}{\rm d}x\\
&=\frac{1}{2}\big(\|\tilde{\tilde{u}}_n\|^2_{\dot{H}^{s}}-\|\tilde u\|^2_{\dot{H}^{s}}+\|\tilde{\tilde{v}}_n\|^2_{\dot{H}^{s}}-\|\tilde v\|^2_{\dot{H}^{s}}+o(1)\big)-\frac{1}{2^*_s}\int_{\Rn}|\tilde{\tilde{u}}_n-\tilde u|^\al|\tilde{\tilde{v}}_n-\tilde v|^\ba{\rm d}x\\
&=\frac{1}{2}\|(\tilde{\tilde{u}}_n, \tilde{\tilde{v}}_n)\|_{\phs}^2-\frac{1}{2}\|(\tilde u, \tilde v)\|_{\phs}^2-\frac{1}{2^*_s}\bigg[\int_{\Rn} |\tilde{\tilde{u}}_n|^\al|\tilde{\tilde{v}}_n|^\ba{\rm d}x -\int_{\Rn}|\tilde{u}|^\al|\tilde v|^\ba{\rm d}x\bigg]+o(1)\\
&=I_{0,0}(\tilde{\tilde{u}}_n, \tilde{\tilde{v}}_n)-I_{0,0}(\tilde u, \tilde v)+o(1)\\
&=I_{0,0}(\tilde u_n, \tilde v_n)-I_{0,0}(\tilde u, \tilde v)+o(1)\\
&=\ga-I_{f,g}(u,v)-I_{0,0}(\tilde u, \tilde v)+o(1),
\end{align*}
where in the last equality we have used Step~2. Now, to complete the proof of Step~5, we left to show that $\langle I'_{0,0}(w_n,z_n)(\va,\psi)\rangle=0$ for all $(\va,\psi)\in C^\infty_c(\Rn)\times C^\infty_c(\Rn)$. Let $(\va,\psi)\in C^\infty_c(\Rn)\times C^\infty_c(\Rn)$ be arbitrary and set
$${\va}_n := r_n^{\frac{N-2s}{2}}\va(r_nx+x_n), \quad
{\psi}_n := r_n^{\frac{N-2s}{2}}\psi(r_nx+x_n).$$
Thus $\va_n\deb 0$ and $\psi_n\deb 0$ in $\Hs$ as $r_n\to 0$.
Observe that applying change of variables,
\begin{align*}
\big\langle (w_n,z_n), (\va,\psi)\big\rangle_{\dot{H}^s\times\dot{H}^s}&=\langle w_n,\va\rangle_{\dot{H}^s}+ \langle z_n,\psi\rangle_{\dot{H}^s}\\
&=\langle \tilde w_n,\va_n\rangle_{\dot{H}^s}+ \langle \tilde z_n,\psi_n\rangle_{\dot{H}^s}\\
&=\langle\tilde{\tilde{u}}_n-\tilde u, \va_n\rangle_{\dot{H}^s}+\langle\tilde{\tilde{v}}_n-\tilde v, \psi_n\rangle_{\dot{H}^s}.
\end{align*}
Therefore,
\begin{align*}
\langle I'_{0,0}(w_n,z_n)(\va,\psi)\rangle&=\langle\tilde{\tilde{u}}_n-\tilde u, \va_n\rangle_{\dot{H}^s}+\langle\tilde{\tilde{v}}_n-\tilde v, \psi_n\rangle_{\dot{H}^s}\\
&\qquad-\frac{\al}{2^*_s}\int_{\Rn}|\tilde{\tilde{u}}_n-\tilde u|^{\al-2}(\tilde{\tilde{u}}_n-\tilde u)|\tilde{\tilde{v}}_n-\tilde v|^\ba\va_n{\rm d}x\\
&\qquad-\frac{\ba}{2^*_s}\int_{\Rn}|\tilde{\tilde{u}}_n-\tilde u|^{\al}|\tilde{\tilde{v}}_n-\tilde v|^{\ba-2}(\tilde{\tilde{v}}_n-\tilde u)\psi_n{\rm d}x\\
&=o(1),
\end{align*}
where the last equality follows by change of variable and an argument similar to Step~2.  This concludes Step 5.

Now, starting from a $(PS)$ sequence $\{(\tilde u_n, \tilde v_n)\}$ for $I_{0,0}$ we have extracted another $(PS)$ sequence $\{(w_n, z_n)\}$ at a level which is strictly lower than the previous one, with a fixed minimum amount of decrease (as it is easy to check that $\displaystyle I_{0,0}(\tilde u,\tilde v)\geq \frac{s}{N}S_{\al,\ba}^\frac{N}{2s}$
). On the other hand, as $\sup_n\|(\tilde u_n, \tilde v_n)\|_{\phs}\leq C$ (finite),  this process should terminate after finitely many steps and the last $(PS)$ sequence strongly converges to $0$. Further, $\bigg|\log\big(\frac{r^i_n}{r_n^j}\big)\bigg|+\bigg|\frac{x_n^i-x_n^j}{r_n^i}\bigg|\To\infty\quad \mbox{for } i\neq j,\, \, 1\leq i,\, j\leq m$ (see
\cite[Theorem 1.2]{PS-2}).
This completes the proof.

\end{proof}

\section{Multiplicity in the nonhomogeneous case}

In this section we aim to prove Theorem~\ref{th:ex-f}. For that first we would like to establish existence of two positive critical points of the functional
\be\label{S3.1}
J_{f,g}(u,v)=\frac{1}{2}\|(u,v)\|_{\phs}^2-\frac{1}{2_s^*}\int_{\Rn}u_+^{\al}v_+^{\ba}\;{\rm d}x-\prescript{}{(\dot{H}^s)'}\langle f,u\rangle_{\dot{H}^s}-\prescript{}{(\dot{H}^s)'}\langle g,v\rangle_{\dot{H}^s}
\ee
where $f,\,g$ are nontrivial nonneagtive functionals on $(\Hs)'$ with $\ker(f)=\ker(g)$.

\begin{remark}\label{SR31} {\rm
If $(u,v)\in\Hs\times\Hs$ is a nontrivial critical point of $J_{f,g}$ then $(u,v)$ solves
\begin{equation}\label{S32}
\left\{\begin{aligned}
	&(-\Delta)^s u = \frac{\al}{2_s^*} u_+^{\al-1}v_+^{\ba}+f(x) \quad\text{ in }\Rn,\\
	&(-\Delta)^sv = \frac{\ba}{2_s^*}u_+^{\al}v_+^{\ba-1}+g(x)\quad\text{ in } \Rn.\\
\end{aligned}
\right.
\end{equation}
Note that taking $(\phi,\psi)=(u_-,v_-)$ as a test function in \eqref{S32}, we obtain
	\begin{align}
	-\|(u_-,v_-)\|^2_{\phs} \, &-\, \iint_{\R^{2N}} \frac{[u_+(y)u_-(x)+u_+(x)u_-(y)]}{|x - y|^{N + 2s}} \, {\rm d}x \, {\rm d}y\no\\
	&-\, \iint_{\R^{2N}} \frac{[v_+(y)v_-(x)+v_+(x)v_-(y)]}{|x - y|^{N + 2s}} \, {\rm d}x \, {\rm d}y\no\\
	&=\prescript{}{(\dot{H}^s)'}{\langle}f,u_-{\rangle}_{H^s}+\prescript{}{(\dot{H}^s)'}{\langle}g,v_-{\rangle}_{H^s}\geq 0.\no
\end{align}
	which in turn implies $u_-=0$ and $v_-=0.$ Therefore $u, v\geq 0$ and $(u,v)$ is a solution of \eqref{MAT1} without strict positivity condition.

Next, we assert that $(u,v)\neq (0,0)$ implies $u\neq 0$ and $v\neq 0.$ Suppose not, that is assume for instance that $u\neq 0$ but $v=0$. Then taking $(\phi,\psi)=(u,0)$ as test function we get $\|u\|_{\Hs}^2=\prescript{}{(\dot{H}^s)'}\langle f,u\rangle_{\dot{H}^s}.$
Further choosing $(\phi,\psi)=(0,u)$ as test function, we have $\prescript{}{(\dot{H}^s)'}\langle g,u\rangle_{\dot{H}^s}=0$. These together with the hypothesis that ker$(f)$=ker$(g)$ implies $\|u\|_{\dot{H}^s}=0.$ This contradicts $(u,v)\neq (0,0).$ Similarly we can show that if $u=0$ then $v=0$ too. Hence our assertion follows. Next, we claim that $u>0,$ and $v>0$ in $\Rn$. Taking $(\phi,0)$ as test function where $\phi\geq 0$ in $\Hs$ we get,
\be\no
\langle u,\phi\rangle_{\dot{H}^s}=\frac{\al}{2_s^*}\int_{\Rn}u^{\al-1}v^{\ba}\phi\,{\rm d}x+\prescript{}{(\dot{H}^s)'}\langle f,\phi\rangle_{\dot{H}^s}\geq 0.
\ee
This implies $0\leq u\in\Hs$ is a weak supersolution to $(-\De)^su=0$.
Therefore applying maximum principle \cite[Theorem 1.2(2)]{DPQ}, with $c=0$ and $p=2$ there, we obtain $u>0$ in $\Rn$. Similarly we can show that $v>0$ in $\Rn$. Hence, if $(u,v)$ is a critical point of $J_{f,g}$ then $(u,v)$ is a solution of \eqref{MAT1}.}
\end{remark}

To prove, existence of two critical points for $J_{f,g}$, first we partition the space $\Hs\times\Hs$ into three disjoint sets via the function $\Psi:\Hs\times\Hs\to\R$ defined by
\be
\Psi(u,v):=\|(u,v)\|_{\phs}^2-(2_s^*-1)\int_{\Rn}|u|^{\al}|v|^{\ba}\,{\rm d}x.\no
\ee
Set
$$\Omega_1:=\{(u,v)\in\Hs\times\Hs: (u,v)=(0,0) \,\,\text{or}\,\, \Psi(u,v)>0\},$$
 $$\Omega_2:=\{(u,v)\in\Hs\times\Hs: \Psi(u,v)<0\},$$
$$\Omega:=\{(u,v)\in\Hs\times\Hs\setminus\{(0,0)\}: \Psi(u,v)=0\}.$$
Put
\be\label{S34}
c_0:=\inf_{\Omega_1}J_{f,g}(u,v),\quad c_1:= \inf_{\Omega}J_{f,g}(u,v).
\ee
\begin{remark}\label{SR33} {\rm
Note that	for all $\la>0$ and $(u,v)\in\Hs\times\Hs$
	$$\Psi(\la u,\la v)=\la^2\|(u,v)\|_{\phs}^2-\la^{2_s^*}(2_s^*-1)\int_{\Rn}|u|^{\al}|v|^{\ba}\;{\rm d}x .$$
	Moreover, $\Psi(0,0)=0$ and $\la\mapsto\Psi(\la u,\la v)$ is a strictly concave function in $\R^+$. Thus for any $(u,v)\in\Hs\times\Hs$ with $\|(u,v)\|_{\phs}=1$, there exists a unique $\la$ ($\la$ depends on $(u,v)$) such that $(\la u,\la v)\in\Omega$. Moreover as
	$$\Psi(\la u,\la v)=(\la^2-\la^{2_s^*})\|(u,v)\|_{\phs}^2\;\, \text{for all }(u,v)\in\Omega,$$
	$(\la u,\la v)\in \Omega_1$ for all $\la\in(0,1)$ and $(\la u,\la v)\in\Omega_2$ for all $\la>1.$}
\end{remark}

\begin{lemma}\label{SL32}
	Assume $C_0$ is defined as in Theorem \ref{th:ex-f} and $c_0$ and $c_1$ are defined as in \eqref{S34}. Further, if
	\be\lab{J1.3}
	\inf_{(u,v)\in\dot{H}^s\times\dot{H}^s,\, \int_{\Rn}|u|^{\al}|v|^{\ba}\;{\rm d}x=1}\bigg\{C_0\|(u,v)\|_{\phs}^\frac{N+2s}{2s}
	-\prescript{}{(\dot{H}^s)'}{\langle}f,u{\rangle}_{H^s}-\prescript{}{(\dot{H}^s)'}{\langle}g,v{\rangle}_{H^s}\bigg\}>0,
	\ee
	then $c_0<c_1$.
\end{lemma}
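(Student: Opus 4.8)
The plan is to analyse, for each $w=(u,v)\in\Omega$, the fibering map $h_w(t):=J_{f,g}(tw)$, $t\ge0$, and to compare $h_w(1)=J_{f,g}(w)$ with $\min_{t\in[0,1]}h_w(t)$, using the exact value of $C_0$ together with \eqref{J1.3}. First I would record the easy fact that $c_0<0$: since $f$ is a nontrivial nonnegative functional there is $\phi_0\ge0$ with $\langle f,\phi_0\rangle>0$, and since $\Psi(t\phi_0,0)=t^2\|\phi_0\|_{\dot{H}^s}^2>0$ for all $t>0$, the pairs $(t\phi_0,0)$ lie in $\Omega_1$, while $J_{f,g}(t\phi_0,0)=\tfrac{t^2}{2}\|\phi_0\|_{\dot{H}^s}^2-t\langle f,\phi_0\rangle<0$ for $t>0$ small; hence $c_0<0=J_{f,g}(0,0)$.

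For $w=(u,v)\in\Omega$ one has $h_w(0)=0$, $h_w''(t)=\|w\|_{\phs}^2-(2^*_s-1)t^{2^*_s-2}\int_{\Rn}u_+^{\al}v_+^{\ba}\,{\rm d}x$ and $h_w'''(t)\le0$. Since $\Psi(w)=0$ means $\|w\|_{\phs}^2=(2^*_s-1)\int_{\Rn}|u|^{\al}|v|^{\ba}\,{\rm d}x\ge(2^*_s-1)\int_{\Rn}u_+^{\al}v_+^{\ba}\,{\rm d}x$, it follows that $h_w''(1)\ge0$, and as $h_w''$ is nonincreasing, $h_w$ is convex on $[0,1]$. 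By Remark~\ref{SR33}, $tw\in\Omega_1$ for all $t\in(0,1)$, so $c_0\le m(w):=\min_{t\in[0,1]}h_w(t)$. The crucial step is the uniform inequality $h_w'(1)>0$ on $\Omega$. Writing $w=\rho(\phi,\psi)$ with $\int_{\Rn}|\phi|^{\al}|\psi|^{\ba}\,{\rm d}x=1$ (here $\rho>0$ since $w\ne(0,0)$), the constraint $\Psi(w)=0$ forces $\rho^{2^*_s-2}=\|(\phi,\psi)\|_{\phs}^2/(2^*_s-1)$; then a short computation using $\al+\ba=2^*_s$ and the definition of $C_0$ (equivalently the identity $C_0(2^*_s-1)^{(N+2s)/4s}=2^*_s-2$, recalling $2^*_s-1=\tfrac{N+2s}{N-2s}$) gives
$$\rho\,C_0\,\|(\phi,\psi)\|_{\phs}^{(N+2s)/2s}=(2^*_s-2)\,\rho^{2^*_s}=(2^*_s-2)\int_{\Rn}|u|^{\al}|v|^{\ba}\,{\rm d}x .$$
Since \eqref{J1.3} provides $\eps_1>0$ with $C_0\|(\phi,\psi)\|_{\phs}^{(N+2s)/2s}-\langle f,\phi\rangle-\langle g,\psi\rangle\ge\eps_1$ whenever $\int_{\Rn}|\phi|^{\al}|\psi|^{\ba}\,{\rm d}x=1$, and since $\int u_+^{\al}v_+^{\ba}\le\int|u|^{\al}|v|^{\ba}$, we get
$$h_w'(1)=\|w\|_{\phs}^2-\int_{\Rn}u_+^{\al}v_+^{\ba}\,{\rm d}x-\langle f,u\rangle-\langle g,v\rangle\ \ge\ (2^*_s-2)\int_{\Rn}|u|^{\al}|v|^{\ba}\,{\rm d}x-\rho\big(\langle f,\phi\rangle+\langle g,\psi\rangle\big)\ \ge\ \rho\,\eps_1\ \ge\ \rho_0\,\eps_1>0,$$
where $\rho_0>0$ is a lower bound for $\rho$ on $\Omega$ coming from $\|(\phi,\psi)\|_{\phs}^2\ge S_{\al,\ba}$. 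Thus $h_w$ is strictly increasing at $t=1$, so $m(w)<h_w(1)=J_{f,g}(w)$; in particular $c_0\le c_1$.

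To upgrade this to $c_0<c_1$ I would argue by contradiction. If $c_1=c_0$, pick $w_n\in\Omega$ with $J_{f,g}(w_n)\to c_0$; then $c_0\le m(w_n)\le J_{f,g}(w_n)$ gives $m(w_n)\to c_0$, hence $J_{f,g}(w_n)-m(w_n)=\int_{t_n}^1 h_{w_n}'(t)\,dt\to0$, where $t_n\in(0,1)$ is the minimiser — it is interior since $m(w_n)<0=h_{w_n}(0)$ for $n$ large — so $h_{w_n}'(t_n)=0$. Using the concavity of $h_{w_n}'$ on $[t_n,1]$ with $h_{w_n}'(t_n)=0$ and $h_{w_n}'(1)\ge\rho_0\eps_1$, this integral is at least $\tfrac12\rho_0\eps_1(1-t_n)$, so $t_n\to1$. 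On the other hand, $h_{w_n}'(t_n)=0$ and $t_n<1$ yield $\langle f,u_n\rangle+\langle g,v_n\rangle\ge t_n(2^*_s-2)\int_{\Rn}|u_n|^{\al}|v_n|^{\ba}\,{\rm d}x$, which together with the bound $\langle f,u_n\rangle+\langle g,v_n\rangle\le(2^*_s-2)\int_{\Rn}|u_n|^{\al}|v_n|^{\ba}\,{\rm d}x-\rho_0\eps_1$ obtained above forces $(2^*_s-2)(1-t_n)\int_{\Rn}|u_n|^{\al}|v_n|^{\ba}\,{\rm d}x\ge\rho_0\eps_1$, so $\int_{\Rn}|u_n|^{\al}|v_n|^{\ba}\,{\rm d}x\to\infty$. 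But then $\|w_n\|_{\phs}^2=(2^*_s-1)\int_{\Rn}|u_n|^{\al}|v_n|^{\ba}\,{\rm d}x\to\infty$, and since the critical term of $J_{f,g}(w_n)$ is bounded by $\tfrac1{2^*_s}\int_{\Rn}|u_n|^{\al}|v_n|^{\ba}\,{\rm d}x$ and the linear term by $C\|w_n\|_{\phs}$, this gives $J_{f,g}(w_n)\to+\infty$, contradicting $J_{f,g}(w_n)\to c_0<0$. Hence $c_0<c_1$.

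The step I expect to be the main obstacle is this last one: converting the pointwise inequality $c_0<J_{f,g}(w)$, valid for every $w\in\Omega$, into a \emph{strict} separation of the two infima. This is exactly where the sharp value of $C_0$ is indispensable — it is precisely what makes $h_w'(1)$ bounded away from zero on $\Omega$ — and one must additionally invoke the coercivity of $J_{f,g}$ on $\Omega$ to rule out an equalising sequence escaping to $\int_{\Rn}|u_n|^{\al}|v_n|^{\ba}\,{\rm d}x\to\infty$. The mismatch between $\Psi$ (formed with $|u|^{\al}|v|^{\ba}$) and $J_{f,g}$ (formed with $u_+^{\al}v_+^{\ba}$) is harmless, since it is used everywhere only in the direction $\int u_+^{\al}v_+^{\ba}\le\int|u|^{\al}|v|^{\ba}$.
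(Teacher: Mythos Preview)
Your proof is correct and follows essentially the same strategy as the paper's: both hinge on the uniform lower bound $h_w'(1)\ge\delta>0$ for $w\in\Omega$ (derived from \eqref{J1.3} via the exact value of $C_0$) together with the coercivity of $J_{f,g}$ on $\Omega$. The only differences are organisational --- the paper first bounds the minimising sequence for $c_1$ and then uses a uniform bound on $|h_{w_n}''|$ to produce the gap directly, whereas you argue by contradiction (no gap forces $t_n\to1$ via concavity of $h_w'$, hence $\|w_n\|\to\infty$, contradicting coercivity), and the paper routes the derivative estimate through the auxiliary functional $I_{f,g}$ while you work directly with $J_{f,g}$ using $\int u_+^{\alpha}v_+^{\beta}\le\int|u|^{\alpha}|v|^{\beta}$.
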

	\begin{proof}
\underline{Step 1}: First we assert that, there exists $\delta>0$ such that
		$$\frac{d}{dt}I_{f,g}(tu,tv)\bigg{|}_{t=1}\geq \de \quad\forall\,\,  (u,v)\in \Omega.$$
	Doing a straight forward computation, it is easy to see that for any $(u,v)\in \Omega$

		\begin{align}\lab{30-7-4}
			\frac{d}{dt}\tilde I_{f,g}(tu,tv)\bigg{|}_{t=1}&=\frac{4s}{N+2s}\|(u,v)\|_{\phs}^2-\prescript{}{(\dot{H}^s)'}{\langle}f,u{\rangle}_{\dot{H}^s}-\prescript{}{(\dot{H}^s)'}{\langle}g,v{\rangle}_{\dot{H}^s}\no\\
&=C_0\frac{\|(u,v)\|^\frac{N+2s}{2s}_{\phs}}{\bigg(\displaystyle\int_{\Rn}|u|^{\al}|v|^{\ba}\;{\rm d}x\bigg)^\frac{N-2s}{4s}}-\prescript{}{(\dot{H}^s)'}{\langle}f,u{\rangle}_{\dot{H}^s}-\prescript{}{(\dot{H}^s)'}{\langle}g,v{\rangle}_{\dot{H}^s}
		\end{align}
		Further, $\eqref{J1.3}$ implies there  exists $d>0$ such that
		
		\be\lab{30-7-3}
	\inf_{(u,v)\in\Hs\times\Hs,\, \int_{\Rn}|u|^{\al}|v|^{\ba}\;{\rm d}x=1}\bigg\{C_0\|(u,v)\|_{\phs}^\frac{N+2s}{2s}
	-\prescript{}{(\dot{H}^s)'}{\langle}f,u{\rangle}_{H^s}-\prescript{}{(\dot{H}^s)'}{\langle}g,v{\rangle}_{H^s}\bigg\}\geq d.\ee
		Now, 
		\Bea
		\eqref{30-7-3}&\Longleftrightarrow& C_0\frac{\|(u,v)\|^\frac{N+2s}{2s}_{\phs}}{(\int_{\Rn}|u|^{\al}|v|^{\ba}\;{\rm d}x)^\frac{N-2s}{4s}}-\prescript{}{(\dot{H}^s)'}{\langle}f,u{\rangle}_{\dot{H}^s}-\prescript{}{(\dot{H}^s)'}{\langle}g,v{\rangle}_{\dot{H}^s} \geq d, \, \text{with } \int_{\Rn}|u|^{\al}|v|^{\ba}\;{\rm d}x=1.\\
		&\Longleftrightarrow& C_0\frac{\|(u,v)\|^\frac{N+2s}{2s}_{\phs}}{(\int_{\Rn}|u|^{\al}|v|^{\ba}\;{\rm d}x)^\frac{N-2s}{4s}}-\prescript{}{(\dot{H}^s)'}{\langle}f,u{\rangle}_{\dot{H}^s}-\prescript{}{(\dot{H}^s)'}{\langle}g,v{\rangle}_{\dot{H}^s} \geq d\left(\int_{\Rn}|u|^{\al}|v|^{\ba}\;{\rm d}x\right)^{1/2_s^*},\no\\
		&\;&\quad \qquad \qquad\qquad \qquad \qquad \quad \qquad \qquad \qquad \forall\,\, (u,v)\in\Hs\times\Hs\setminus\{(0,0)\}.
		\Eea
Observe that $\int_{\Rn}|u|^{\al}|v|^{\ba}\,{\rm d}x$ is bounded away from $0$ for all $(u,v)\in\Omega.$ Therefore, plugging back the above estimate into \eqref{30-7-4} proves Step 1.
		
		\vspace{2mm}
		
		\underline{Step 2:} Let $\{(u_n,v_n)\}$ be a minimizing sequence for $J_{f,g}$ on $\Omega$, i.e., $J_{f,g}(u_n,v_n)\to c_1$ and $\|(u_n,v_n)\|_{\phs}^2=(2^*_s-1)\int_{\Rn}|u_n|^{\al}|v_n|^{\ba}\;{\rm d}x$. Therefore, for large $n$
\begin{align*}c_1+o(1)\geq J_{f,g}(u_n,v_n)\geq I_{f,g}(u_n,v_n)&\geq \bigg(\frac{1}{2}-\frac{1}{2^*_s(2^*_s-1)}\bigg)\|(u_n,v_n)\|^2_{\phs}\\
		&\qquad- (\|f\|_{(\dot{H}^s)'}+\|g\|_{(\dot{H})'})\|(u_n, v_n)\|_{\phs}.\end{align*}
		This implies that $\{I_{f,g}(u_n,v_n)\}$ is a bounded sequence and $\{\|(u_n,v_n)\|_{\phs}\}$ and
		$\left\{\int_{\Rn}|u_n|^{\al}|v_n|^{\ba}\;{\rm d}x\right\}$ are bounded.
		
		{\bf Claim}: $c_0<0$.
		
		Observe that to prove the claim, it is sufficient to show that there exists $(u,v)\in \Omega_1$ such that $J_{f,g}(u,v)<0$. Using Remark \ref{SR33}, we can choose $(u,v)\in \Omega$ such that $\prescript{}{(\dot{H}^s)'}{\langle}f,u{\rangle}_{\dot{H}^s}+\prescript{}{(\dot{H}^s)'}{\langle}g,v{\rangle}_{\dot{H}^s}>0$. Therefore,
		$$J_{f,g}(tu,tv)=t^2\bigg[\frac{2^*_s-1}{2}-\frac{t^{2^*_s-2}}{2^*_s}\bigg]\int_{\Rn}|u|^{\al}|v|^{\ba}\;{\rm d}x-t\prescript{}{(\dot{H}^s)'}{\langle}f,u{\rangle}_{\dot{H}^s}-t\prescript{}{(\dot{H}^s)'}{\langle}g,v{\rangle}_{\dot{H}^s} <0$$
		for $t<<1$. Moreover, $(tu,tv)\in \Omega_1$
		by Remark \ref{SR33}. Hence the claim follows.
		
		Due to the above claim, $J_{f,g}(u_n,v_n)< 0$ for large $n$. Consequently, for large $n$
		$$0>J_{f,g}(u_n,v_n)\geq \bigg(\frac{1}{2}-\frac{1}{2^*_s(2^*_s-1)}\bigg)\|(u_n,v_n)\|^2_{\phs}-\prescript{}{(\dot{H}^s)'}{\langle}f,u_n{\rangle}_{\dot{H}^s}-\prescript{}{(\dot{H}^s)'}{\langle}g,v_n{\rangle}_{\dot{H}^s}.$$
		This in turn implies $\prescript{}{(\dot{H}^s)'}{\langle}f,u_n{\rangle}_{\dot{H}^s}+\prescript{}{(\dot{H}^s)'}{\langle}g,v_n{\rangle}_{\dot{H}^s}> 0$ for all large $n$.
		Consequently, $\frac{d}{dt}I_{f,g}(tu_n,tv_n)<0$ for $t>0$ small enough. Thus, by Step 1, there exists $t_n\in (0,1)$ such that $\frac{d}{dt}I_{f,g}(t_n u_n,t_n v_n)=0$. Since for all $(u,v)\in \Omega$, the function  $\frac{d}{dt}I_{f,g}(tu,tv)$ is strictly increasing in $t\in [0,1)$, we can conclude that
		$t_n$ is unique.\medskip
		
		\underline{Step 3}: In this step we show  that 	
		\be\label{J3}
		\liminf_{n\rightarrow\infty}\{I_{f,g}(u_n,v_n)-I_{f,g}(t_nu_n,t_nv_n)\}>0.
		\ee
Observe that $I_{f,g}(u_n,v_n)-I_{f,g}(t_nu_n,t_nv_n)=\displaystyle\int_{t_n}^{1}\frac{d}{dt}\{I_{f,g}(tu_n,tv_n)\} \, {\rm d}t$ and that for all $n\in\mathbb{N}$ there is $\xi_n>0$ such that
		$t_n\in(0,\;1-2\xi_n)$ and $\frac{d}{dt}I_{f,g}(tu_n)\geq \delta/2$ for $t\in[1-\xi_n,\;1]$.\\
		
		To establish \eqref{J3}, it is enough to show that $\xi_n>0$ can be chosen independent of $n\in\mathbb{N}$. This is possible as $\frac{d}{dt}I_{f,g}(tu_n,tv_n)|_{t=1}\geq \frac{\delta}{2}$ for $t\in[1-\xi_n, 1]$ and $\{(u_n,v_n)\}$ is bounded in $\Hs\times\Hs,$ so that for all $n\in\mathbb{N}$ and $t\in [0,1]$ \\
		\bea
		\bigg|\frac{d^2}{dt^2}I_{f,g}(tu_n,tv_n)\bigg| &=& \bigg|\|(u_n,v_n)\|_{\phs}^2 - (2^*_s-1)t^{2^*_s-2}\int_{\Rn}|u_n|^{\al}|v_n|^{\ba}\;{\rm d}x\bigg|\no\\
		&=& |1-t^{2^*_s-2}|\|(u_n,v_n)\|_{\phs}^2\leq C,\no
		\eea
		for all $n\geq 1$ and $t\in[0,\;1]$.
		
		\vspace{2mm}	
		
		\underline{Step 4:} From the definition of $J_{f,g}$ and $I_{f,g}$, it immediately follows that $\frac{d}{dt}J_{f,g}(tu,tv)\geq \frac{d}{dt}I_{f,g}(tu,tv)$ for all $(u,v)\in\Hs\times\Hs$ and for all $t>0$. Hence,
		\begin{align*}J_{f,g}(u_n,v_n)-J_{f,g}(t_nu_n,t_nv_n)= \int_{t_n}^{1}\frac{d}{dt}(J_{f,g}(tu_n,tv_n))\; {\rm d}t &\geq \int_{t_n}^{1}\frac{d}{dt}I_{f,g}(tu_n,tv_n)\; {\rm d}t \\
		&= I_{f,g}(u_n,v_n)-I_{f,g}(t_nu_n,t_nv_n).\end{align*}
		Since $\{(u_n,v_n)\}\subset \Omega$  is a minimizing sequence for $J_{f,g}$ on $\Omega$, and $(t_nu_n,t_nv_n)\in \Omega_1,$ we conclude using \eqref{J3} that\\
		$$c_0 = \inf_{(u,v)\in \Omega_1}J_{f,g}(u,v)< \inf_{(u,v)\in \Omega}J_{f,g}(u,v)= c_1.$$
	\end{proof}

	\begin{proposition}\lab{p:30-7-1}
		Assume that \eqref{J1.3} holds. Then $J_{f,g}$ has a critical point $(u_0,v_0)\in \Omega_1$, with $J_{f,g}(u_0,v_0)=c_0$. In particular, $(u_0,v_0)$ is a positive weak solution to \eqref{MAT1}.
	\end{proposition}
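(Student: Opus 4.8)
The plan is to realise $(u_0,v_0)$ as a minimiser of $J_{f,g}$ on the component $\Omega_1$: I would first produce, via Ekeland's variational principle, a Palais--Smale sequence at the level $c_0$ lying inside $\Omega_1$, then invoke the decomposition behind Proposition~\ref{PSP} to rule out any loss of compactness, so that the weak limit attains $c_0$, and finally read off positivity from Remark~\ref{SR31}. I begin by recording the needed facts about $\Omega_1$. On $\overline{\Omega_1}=\Omega_1\cup\Omega$ one has $\Psi(u,v)\ge 0$, i.e. $\int_{\Rn}|u|^{\al}|v|^{\ba}\,{\rm d}x\le (2^*_s-1)^{-1}\|(u,v)\|^2_{\phs}$, so that
\[
J_{f,g}(u,v)\ \ge\ \Big(\tfrac12-\tfrac1{2^*_s(2^*_s-1)}\Big)\|(u,v)\|^2_{\phs}-\big(\|f\|_{\hms}+\|g\|_{\hms}\big)\|(u,v)\|_{\phs};
\]
hence $J_{f,g}$ is coercive and bounded below on $\overline{\Omega_1}$ and $c_0>-\infty$. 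Moreover $\Omega_1$ and $\Omega_2$ are open (for $\Omega_1$, note a small ball about the origin lies in $\Omega_1$ by the Sobolev inequality), $\overline{\Omega_1}=\Omega_1\cup\Omega$, and $\partial\Omega_1=\Omega$; and, as in the proof of Lemma~\ref{SL32}, $c_0<0$ (take $(u,v)\in\Omega$ with $\langle f,u\rangle+\langle g,v\rangle>0$ by Remark~\ref{SR33} and let $t\to0^+$ in $J_{f,g}(tu,tv)$), while Lemma~\ref{SL32} gives $c_0<c_1$.

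Next I would apply Ekeland's principle to $J_{f,g}$ on the complete metric space $\overline{\Omega_1}$, on which $\inf J_{f,g}=\min\{c_0,c_1\}=c_0$, obtaining a bounded sequence $\{(u_n,v_n)\}\subset\overline{\Omega_1}$ with $J_{f,g}(u_n,v_n)\to c_0$ and $J_{f,g}(w,z)\ge J_{f,g}(u_n,v_n)-\tfrac1n\|(w,z)-(u_n,v_n)\|_{\phs}$ for all $(w,z)\in\overline{\Omega_1}$. Because $c_0<c_1=\inf_\Omega J_{f,g}$ and $J_{f,g}$ is Lipschitz on bounded sets, for $n$ large $(u_n,v_n)\in\Omega_1$ and $\mathrm{dist}\big((u_n,v_n),\Omega\big)\ge\eta$ for some $\eta>0$; since $\partial\Omega_1=\Omega$, a whole ball $B_\eta(u_n,v_n)$ sits in $\Omega_1$, and the Ekeland inequality then upgrades to $\|J_{f,g}'(u_n,v_n)\|_{\hhms}\le\tfrac1n$. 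Testing $J_{f,g}'(u_n,v_n)$ against $((u_n)_-,(v_n)_-)$ and using $f,g\ge0$, exactly as in Remark~\ref{SR31}, gives $((u_n)_-,(v_n)_-)\to(0,0)$ in $\phs$, and hence $J_{f,g}(u_n,v_n)=I_{f,g}(u_n,v_n)+o(1)$ and $J_{f,g}'(u_n,v_n)=I_{f,g}'(u_n,v_n)+o(1)$; so $\{(u_n,v_n)\}$ is a bounded $(PS)$ sequence for $I_{f,g}$ at level $c_0$.

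Up to a subsequence $(u_n,v_n)\rightharpoonup(u_0,v_0)$ in $\phs$ with $u_0,v_0\ge0$, and by Lemma~\ref{L2-2} the pair $(u_0,v_0)$ solves \eqref{MAT1} without the sign restrictions. Arguing as in Step~2 of the proof of Proposition~\ref{PSP}, $(\tilde u_n,\tilde v_n):=(u_n-u_0,v_n-v_0)$ is a $(PS)$ sequence for $I_{0,0}$ at level $c_0-I_{f,g}(u_0,v_0)$, so $\|(\tilde u_n,\tilde v_n)\|^2_{\phs}-\int_{\Rn}|\tilde u_n|^{\al}|\tilde v_n|^{\ba}\,{\rm d}x=o(1)$ and, up to a further subsequence, $\ell:=\lim_n\|(\tilde u_n,\tilde v_n)\|^2_{\phs}=\lim_n\int_{\Rn}|\tilde u_n|^{\al}|\tilde v_n|^{\ba}\,{\rm d}x$; the Sobolev inequality for $S_{\al,\ba}$ forces either $\ell=0$ or $\ell\ge S_{\al,\ba}^{N/2s}$. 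If $\ell=0$ then $(u_n,v_n)\to(u_0,v_0)$ strongly, so $J_{f,g}(u_0,v_0)=c_0<0$, whence $(u_0,v_0)\ne(0,0)$; moreover $(u_0,v_0)\in\overline{\Omega_1}=\Omega_1\cup\Omega$ and $J_{f,g}(u_0,v_0)=c_0<c_1$ excludes $\Omega$, so $(u_0,v_0)\in\Omega_1$, and Remark~\ref{SR31} yields that $(u_0,v_0)$ is a positive weak solution of \eqref{MAT1} with $J_{f,g}(u_0,v_0)=c_0$.

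The main obstacle is excluding the concentration alternative $\ell\ge S_{\al,\ba}^{N/2s}$. Here I would use the Brezis--Lieb splitting (Lemma~\ref{l:BL}), together with $\lim_n\int_{\Rn}|\tilde u_n|^{\al}|\tilde v_n|^{\ba}\,{\rm d}x=\ell$ and $\lim_n\|(u_n,v_n)\|^2_{\phs}=\|(u_0,v_0)\|^2_{\phs}+\ell$, to get
\[
\lim_n\Psi(u_n,v_n)=\Psi(u_0,v_0)-(2^*_s-2)\,\ell .
\]
Since $(u_n,v_n)\in\Omega_1$, we have $\Psi(u_n,v_n)>0$, hence $\Psi(u_0,v_0)\ge (2^*_s-2)\ell\ge (2^*_s-2)S_{\al,\ba}^{N/2s}>0$, i.e. $(u_0,v_0)\in\Omega_1$ and in particular $J_{f,g}(u_0,v_0)\ge c_0$. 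On the other hand, the energy splitting $I_{f,g}(u_n,v_n)=I_{f,g}(u_0,v_0)+I_{0,0}(\tilde u_n,\tilde v_n)+o(1)$ and $\lim_n I_{0,0}(\tilde u_n,\tilde v_n)=\big(\tfrac12-\tfrac1{2^*_s}\big)\ell=\tfrac sN\ell$ give $c_0=J_{f,g}(u_0,v_0)+\tfrac sN\ell\ge c_0+\tfrac sN S_{\al,\ba}^{N/2s}>c_0$, a contradiction. Therefore $\ell=0$ and the proof concludes as in the preceding paragraph. Beyond the bookkeeping required to make the $(PS)$-level identities rigorous in the presence of the truncations $u_+,v_+$, the key point is this observation that a splitting bubble, which necessarily lies in $\Omega_2$, pushes the regular part back into $\Omega_1$; once that is in hand the non-compact case is self-contradictory.
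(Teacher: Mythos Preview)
Your argument is correct and follows the same overall architecture as the paper's proof: coercivity on $\overline{\Omega_1}$, Ekeland's principle to obtain a bounded $(PS)$ sequence inside $\Omega_1$ (using $c_0<c_1$), elimination of negative parts, and then ruling out loss of compactness by playing the $\Psi$--splitting against the energy splitting.

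The one genuine difference is in how the non-compact alternative is excluded. The paper invokes the full profile decomposition of Proposition~\ref{PSP}, extracts bubbles $(\tilde u_j,\tilde v_j)$, observes that each has $\Psi<0$, deduces from the energy identity that $J_{f,g}(u_0,v_0)<c_0$ and hence $\Psi(u_0,v_0)\le 0$, and then reaches a contradiction by expanding $\Psi$ along the approximate profiles and showing all cross terms vanish. You bypass the bubble decomposition entirely: a single Brezis--Lieb step gives $\lim_n\Psi(u_n,v_n)=\Psi(u_0,v_0)-(2^*_s-2)\ell$, which together with $\Psi(u_n,v_n)\ge 0$ forces $(u_0,v_0)\in\Omega_1$ when $\ell>0$; the energy identity $c_0=J_{f,g}(u_0,v_0)+\tfrac{s}{N}\ell$ then contradicts $J_{f,g}(u_0,v_0)\ge c_0$. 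This is the same contradiction read in the reverse direction, but your route is more economical: it needs only Lemma~\ref{l:BL} and the elementary dichotomy $\ell=0$ or $\ell\ge S_{\al,\ba}^{N/2s}$, not the iterated rescaling and the delicate vanishing of the inner products between distinct profiles. The paper's approach, on the other hand, has the advantage of fitting directly into the global framework of Proposition~\ref{PSP}, which is needed anyway for the second solution in Proposition~\ref{p:31-7-2}.
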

	
	\begin{proof} We decompose the proof into few steps.
		
		\vspace{2mm}
		
		\underline{Step 1}: $c_0>-\infty$.
		
		Since $J_{f,g}(u,v)\geq I_{f,g}(u,v)$, it is enough to show that $I_{f,g}$ is bounded from below. From the definition of $\Omega_1$, it immediately follows that for all $(u,v)\in {\Omega}_1$,
		\be\lab{31-7-3}I_{f,g}(u,v)\geq \left[\frac{1}{2}-\frac{1}{2^*_s(2^*_s-1)}\right]\|(u,v)\|_{\phs}^2- (\|f\|_{(\dot{H}^s)'}+\|g\|_{(\dot{H})'})\|(u_n, v_n)\|_{\phs}.\ee
		As RHS is quadratic function in $\|(u,v)\|_{\phs}$, $I_{f,g}$ is bounded from below. Hence Step 1 follows.
		
		\vspace{2mm}
		
		\underline{Step 2}: In this step we show that there exists a bounded  nonnegative $(PS)$ sequence
		$\{(u_n,v_n)\} \subset \Omega_1$ for $J_{f,g}$ at level $c_0$.
		
		Let $\{(u_n,v_n)\}\subset \bar{\Omega}_1$ such that $J_{f,g}(u_n,v_n)\to c_0$.
		Since Lemma \ref{SL32} implies that $c_0<c_1$,  without restriction we can assume $\{(u_n,v_n)\}\subset \Omega_1$. Further, using Ekeland's variational principle from $\{(u_n,v_n)\} $, we can extract a $(PS)$ sequence in $\Omega_1$ for $J_{f,g}$ at level $c_0$. We again call it by $\{(u_n,v_n)\}$. Moreover, as $J_{f,g}(u,v)\geq I_{f,g}(u,v)$, from \eqref{31-7-3} it follows that $\{(u_n,v_n)\} $ is a bounded sequence. Therefore,  up to a subsequence $(u_n,v_n)\rightharpoonup (u_0,v_0)$ in $\Hs\times\Hs$ and $(u_n,v_n)\to (u_0,v_0)$ a.e. in $\Rn$. In particular, $ (u_n)_+\to (u_0)_+,\;(v_n)_+\to (v_0)_+$ and $ (u_n)_-\to (u_0)_-, \;(v_n)_-\to (v_0)_-$ a.e. in $\Rn$. Moreover, as $f,\;g$ are nonnegative functionals, a straight forward computation yields
		\Bea
		o(1)&=&\prescript{}{\dot{H}^s)'}\langle J'_{f,g}(u_n,v_n), \left((u_n)_-,(v_n)_-\right)\rangle_{\phs}\\
		&=&\langle (u_n,v_n), \left((u_n)_-,(v_n)_-\right)\rangle_{\phs}-\prescript{}{(\dot{H}^s)'}\langle f, (u_n)_-\rangle_{\dot{H}^s}-\prescript{}{(\dot{H}^s)'}\langle g,(v_n)_-\rangle_{\dot{H}^s}\\
		&\leq&-\|\left((u_n)_-,(v_n)_-\right)\|_{\phs}^2.
		\Eea
		Therefore, $\big((u_n)_-,(v_n)_-\big)\To (0,0)$ in $\Hs\times \Hs$, which in turn implies up to a subsequence $(u_n)_-\to 0$ and $(v_n)_-\to 0$ a.e. in $\Rn$ and thus $(u_0)_-=0$  and $(v_0)_-=0$ a.e. in $\Rn$.  Consequently, without loss of generality, we can assume that $\{(u_n,v_n)\}$ is a nonnegative sequence. This completes the proof of Step~2.
		
		\vspace{2mm}
		
		\underline{Step 3:}  In this step we show that $(u_n,v_n)\to (u_0,v_0)$ in $\Hs\times \Hs$ and $(u_0,v_0)\in \Omega_1$.
		
		\noindent Applying Proposition \ref{PSP}, we get
		\be\label{J5}
		(u_n,v_n)-\bigg((u_0,v_0) +\sum_{j=1}^{m}(\tilde u_j,\tilde v_j)^{r_n^j, x_n^j}\bigg) \To (0,0) \;\text{ in }\Hs\times\Hs.
		\ee
		with $J_{f,g}'(u_0,v_0) =0$, $(\tilde u_j,\tilde v_j)$  is a nonnegative solution of \eqref{PSD2} ($(u_n, v_n)$ is $(PS)$ sequence for $J_{f,g}$ implies $(\tilde u_j,\tilde v_j)$  a solution of \eqref{S32} with $f=0=g$ and therefore by Remark~\ref{SR31}, $(\tilde u_j,\tilde v_j)$  is a nonnegative solution of \eqref{PSD2}), and  $\{x_n^j\}\subset \Rn$,  $\{r_n^j\}\subset\R^{+} $
are some appropriate sequences such that $r_n^j\overset{n\to\infty}\To 0$ and
		either $x_n^j\overset{n\to\infty}\To x^j$ or $|x_n^j|\overset{n\to\infty}\To\infty$. To prove Step~3, we need to show that $m=0$. Arguing  by contradiction,
		suppose that $j\neq 0$ in \eqref{J5}. Therefore,
		\begin{align}\lab{12-4-3}
		\Psi\bigg((\tilde u_j,\tilde v_j)^{r_n^j, x_n^j}\bigg)&=\|\left(\tilde u_j,\tilde v_j\right)\|^2_{\phs}-(2^*_s-1)\int_{\Rn}|\tilde u_j|^{\al}|\tilde{v}_j|^{\ba}\;{\rm d}x\no\\
&=(2-2_s^*)\|\left(\tilde u_j,\tilde v_j\right)\|_{\phs}^2<0.
		\end{align}			
		From Proposition \ref{PSP}, we also have
		$$c_0=\lim_{n\to\infty} J_{f,g}(u_n,v_n)=  J_{f,g}(u_0,v_0)+\sum_{j=1}^{m}J_{0,0}(\tilde u_j,\tilde v_j).$$
Since $(\tilde u_j,\tilde v_j)$ is a solution to \eqref{PSD2}, it is easy to see that $J_{0,0}(\tilde{u}_j,\tilde{v}_j) =\frac{s}{N}\|(\tilde u_j,\tilde v_j)\|_{\phs}^2$ and $S_{(\al,\ba)}\leq \|(\tilde u_j,\tilde v_j)\|_{\phs}^{\frac{4s}{N}}$, which in turn implies $J_{0,0}(\tilde u_j,\tilde v_j)= \frac{s}{N}S_{(\al,\ba)}^{\frac{N}{2s}}.$
		Consequently, $ J_{f,g}(u_0,v_0)<c_0$. Therefore, $(u_0,v_0)\not\in \Omega_1$ and
		\be\lab{12-4-4} \Psi(u_0,v_0)\leq 0.\ee 	
Next, we evaluate $\Psi\bigg((u_0,v_0) +\sum_{j=1}^{m}(\tilde u_j,\tilde v_j)^{r_n^j, x_n^j}\bigg)$. We observe that $(u_n,v_n)\in \Omega_1$ implies $\Psi(u_n,v_n)\geq 0$. Combining this with the uniform continuity of $\Psi$ and \eqref{J5} yields
		\be\label{J8}
		0\leq \liminf_{n\rightarrow\infty}\Psi(u_n,v_n)=\liminf_{n\rightarrow\infty} \Psi\bigg((u_0,v_0) +\sum_{j=1}^{m}(\tilde u_j,\tilde v_j)^{r_n^j, x_n^j}\bigg).
		\ee
		Note that from Step~2, we already have $u_0,\, v_0\geq 0$ and $(\tilde{u}_j, \tilde{v}_j)$ is nonnegative for all $j$ (see the paragraph after \eqref{J5}). Therefore, as $\al, \ba>1$
		\begin{align}\lab{12-4-1}
		\Psi\bigg((u_0,v_0) +\sum_{j=1}^{m}(\tilde u_j,\tilde v_j)^{r_n^j, x_n^j}\bigg)&=\|(u_0,v_0)\|_{\phs}^2+\Bigg{\|}\sum_{j=1}^{m}(\tilde u_j,\tilde v_j)^{r_n^j, x_n^j}\Bigg{\|}^2_{\phs}\no\\
		&\qquad+2\Big\langle (u_0,v_0),  \sum_{j=1}^{m} (\tilde u_j,\tilde v_j)^{r_n^j, x_n^j}\Big\rangle_{\phs}\no\\
		&\qquad- (2^*_s-1)\int_{\Rn}\bigg|u_0+\sum_{j=1}^{m}\tilde u_j^{r_n^j, x_n^j}   \bigg|^{\al}\bigg|v_0+\sum_{j=1}^{m}\tilde u_j^{r_n^j, x_n^j}\bigg|^{\ba}\;{\rm d}x\no\\
		&\leq\|(u_0,v_0)\|_{\phs}^2+ \sum_{j=1}^{m}\Bigg{\|}(\tilde u_j,\tilde v_j)^{r_n^j, x_n^j}\Bigg{\|}^2_{\phs}\no\\
		&\qquad+2\sum_{j=1}^{m}\sum_{i=1}^{m} \Big\langle (\tilde u_j,\tilde v_j)^{r_n^j, x_n^j}, (\tilde u_i,\tilde v_i)^{r_n^i, x_n^i}\Big\rangle_{\phs}\no\\
		&\qquad+2\Big\langle (u_0,v_0),  \sum_{j=1}^{m} (\tilde u_j,\tilde v_j)^{r_n^j, x_n^j}\Big\rangle_{\phs}\no\\
		&\qquad-(2^*_s-1)\bigg(\int_{\Rn}|u_0|^\al|v_0|^\ba{\rm d}x+\sum_{j=1}^{m}\int_{\Rn}|\tilde{u}_j^{r_n^j, x_n^j}|^\al |\tilde{v}_j^{r_n^j,x_n^j}|^\ba {\rm d}x \bigg)\no\\
&=\Psi(u_0,v_0)+\sum_{j=1}^{m}\Psi\big((\tilde u_j,\tilde v_j)^{r_n^j, x_n^j}\big)\no\\
&\qquad+ \text{the above inner products.}
		\end{align}
We now prove that all the inner products in the RHS of \eqref{12-4-1} approaches $0$ as $n\to\infty$. As $r_n^j\overset{n\to\infty}\To 0$, it follows that $u_j^{r_n^j, x_n^j}\deb 0$ and $v_j^{r_n^j, x_n^j}\deb 0$ in $\Hs$ as $n\to\infty$ (see \cite[Lemma~3]{PS}). Therefore, $\langle u_0, u_j^{r_n^j, x_n^j}\rangle_{\dot{H}^s} \overset{n\to\infty}\To 0$ and $\langle v_0, v_j^{r_n^j, x_n^j}\rangle_{\dot{H}^s}\overset{n\to\infty}\To 0$ for all $j=1,\cdots, m$. Hence,
$$2\Big\langle (u_0,v_0),  \sum_{j=1}^{m} (\tilde u_j,\tilde v_j)^{r_n^j, x_n^j}\Big\rangle_{\phs}=o(1) \quad\text{as}\,\, n\to\infty.$$		
Next,
		\Bea
		&&\Big\langle (\tilde u_j,\tilde v_j)^{r_n^j, x_n^j}, (\tilde u_i,\tilde v_i)^{r_n^i, x_n^i}\Big\rangle_{\phs}\\
		&=&(r_n^i)^\frac{N-2s}{2}(r_n^j)^{-\frac{N-2s}{2}} \iint_{\R^{2N}}
	\frac{\big(\tilde u_j(\frac{x-x_n^j}{r_n^j})-\tilde u_j(\frac{y-x_n^j}{r_n^j})\big)\big(\tilde u_i(\frac{x-x_n^i}{r_n^i})-\tilde u_i(\frac{x-x_n^i}{r_n^i})\big)}{|x-y|^{N+2s}}{\rm d}x{\rm d}y\\
	&&+(r_n^i)^\frac{N-2s}{2}(r_n^j)^{-\frac{N-2s}{2}} \iint_{\R^{2N}}
	\frac{\big(\tilde v_j(\frac{x-x_n^j}{r_n^j})-\tilde v_j(\frac{y-x_n^j}{r_n^j})\big)\big(\tilde v_i(\frac{x-x_n^i}{r_n^i})-\tilde v_i(\frac{x-x_n^i}{r_n^i})\big)}{|x-y|^{N+2s}} {\rm d}x{\rm d}y\\	
&=&(r_n^i)^\frac{N-2s}{2}(r_n^j)^{-\frac{N-2s}{2}} \iint_{\R^{2N}}\frac{\big(\tilde u_i(x)-\tilde u_i(y)\big)\big(\tilde u_j(\frac{r_n^ix+x_n^i-x_n^j}{r_n^j})-\tilde u_j(\frac{r_n^iy+x_n^i-x_n^j}{r_n^j})\big)}{|x-y|^{N+2s}}{\rm d}x{\rm d}y\\
		&&+(r_n^i)^\frac{N-2s}{2}(r_n^j)^{-\frac{N-2s}{2}} \iint_{\R^{2N}}\frac{\big(\tilde v_i(x)-\tilde v_i(y)\big)\big(\tilde v_j(\frac{r_n^ix+x_n^i-x_n^j}{r_n^j})-\tilde v_j(\frac{r_n^iy+x_n^i-x_n^j}{r_n^j})\big)}{|x-y|^{N+2s}}{\rm d}x{\rm d}y\\
		&=&\langle \tilde u_i, \tilde u_j^n\rangle_{\dot{H}^s}+
		\langle\tilde v_i, \tilde v_j^n \rangle_{\dot{H}^s},
		\Eea
		where $\tilde u_j^n(x):=\left(\frac{r_n^i}{r_n^j}\right)^\frac{N-2s}{2}\tilde u_j\big(\frac{r_n^i}{r_n^j}x+\frac{x_n^i-x_n^j}{r_n^j}\big)$,
		and $\tilde v_j^n(x):=\left(\frac{r_n^i}{r_n^j}\right)^\frac{N-2s}{2}\tilde v_j\big(\frac{r_n^i}{r_n^j}x+\frac{x_n^i-x_n^j}{r_n^j}\big).$ Further, we observe that  using the following
		$$\bigg|\log\left(\frac{r^i_n}{r_n^j}\right)\bigg|+\bigg|\frac{x_n^i-x_n^j}{r_n^i}\bigg|\To\infty$$
		from Proposition \ref{PSP}, it is easy to see that $\tilde u_i^n\deb 0$  and $\tilde v_i^n\deb 0$ in $\Hs\times\Hs$
		 as $n\to \infty$ for each fixed $i$ and $j$ (see \cite[Lemma~3]{PS}).  Hence
		 $$\Big\langle (\tilde u_j,\tilde v_j)^{r_n^j, x_n^j}, (\tilde u_i,\tilde v_i)^{r_n^i, x_n^i}\Big\rangle_{\phs}=o(1).$$
Substituting this back into \eqref{12-4-1} and using \eqref{12-4-3} and \eqref{12-4-4} gives a contradiction to \eqref{J8}. Therefore, $m=0$ in \eqref{J5}. Hence, $(u_n,v_n)\to (u_0,v_0)$ in $\Hs\times\Hs$. Consequently, $\Psi(u_n,v_n)\to \Psi(u_0,v_0)$, which in turn implies $(u_0,v_0)\in \bar \Omega_1$. But, since $c_0<c_1$, we conclude $(u_0,v_0)\in \Omega_1$. Thus Step 3 follows.

		\vspace{2mm}
		
		{\bf Step 4:} From the previous steps we see that $J_{f,g}(u_0,v_0)=c_0$ and
		$J_{f,g}'(u_0,v_0)=0$. Therefore, $(u_0,v_0)$ is a weak solution to \eqref{S32}. Combining this with Remark \ref{SR31}, we end the proof of the proposition.
	\end{proof}

	\begin{proposition}\lab{p:31-7-2}
		 Assume that \eqref{J1.3} holds. Then, $J_{f,g}$ has a second critical point $(u_1,v_1)\neq (u_0,v_0)$, where $(u_0,v_0)$ is the positive solution to \eqref{MAT1} obtained in Proposition \ref{p:30-7-1}.
In particular, $(u_1,v_1)$ is a second positive solution to \eqref{MAT1}.
	\end{proposition}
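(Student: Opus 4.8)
The plan is to obtain the second critical point of $J_{f,g}$ by a mountain--pass argument based at the local minimizer $(u_0,v_0)$ of Proposition~\ref{p:30-7-1}, the admissible energy window being dictated by the Palais--Smale decomposition of Proposition~\ref{PSP}. First I would observe that $\Omega_1$ is an open subset of $\phs$ containing the origin (by the Sobolev inequality $\Psi>0$ on a small ball around $(0,0)$), so that the identity $J_{f,g}(u_0,v_0)=c_0=\inf_{\Omega_1}J_{f,g}$ makes $(u_0,v_0)$ a genuine local minimum of $J_{f,g}$ on the whole space. Since $\phs\setminus(\Omega_1\cup\Omega_2)=\Omega$, any continuous curve leaving $\Omega_1$ and ending in $\Omega_2$ must meet $\Omega$; hence, for any endpoint $(e_1,e_2)\in\Omega_2$ with $J_{f,g}(e_1,e_2)<c_0$, the mountain--pass level
$$c^{*}:=\inf_{\ga\in\Ga}\max_{t\in[0,1]}J_{f,g}(\ga(t)),\qquad \Ga:=\{\ga\in C([0,1];\phs):\ga(0)=(u_0,v_0),\ \ga(1)=(e_1,e_2)\},$$
satisfies $c^{*}\ge\inf_\Omega J_{f,g}=c_1$, and by Lemma~\ref{SL32} (which uses \eqref{J1.3}) one has $c_1>c_0$; in particular $c^{*}>c_0=J_{f,g}(u_0,v_0)$, which already separates the two solutions. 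Using \eqref{J1.3} together with the lower bound $\|(u,v)\|_{\phs}\ge\rho_0>0$ valid on $\Omega$ (forced by $\Psi=0$ and the Sobolev inequality), one also gets $c^{*}\ge c_1>0$ once $\|f\|_{(\dot H^s)'}$ and $\|g\|_{(\dot H^s)'}$ are small.

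For the construction of the endpoint and the decisive upper bound on $c^{*}$ I would use the extremal family for $S_{\al,\ba}$: by Lemma~\ref{l:S} and \eqref{24-9-2}, $(U_\eps,V_\eps):=(B\,w_\eps,\,C\,w_\eps)$ with $w_\eps(x)=\eps^{-(N-2s)/2}w((x-x_0)/\eps)$ and $B/C=\sqrt{\al/\ba}$ achieves $S_{\al,\ba}$ for every $\eps>0$. Take as mountain--pass path the segment $\sigma\mapsto(u_0,v_0)+\sigma(U_\eps,V_\eps)$, $\sigma\in[0,T_\eps]$, with $T_\eps$ so large that the endpoint already lies in $\Omega_2$ with $J_{f,g}<c_0$ (possible since $\Psi\big((u_0,v_0)+\sigma(U_\eps,V_\eps)\big)\to-\infty$ and $J_{f,g}\big((u_0,v_0)+\sigma(U_\eps,V_\eps)\big)\to-\infty$ as $\sigma\to\infty$, because $\al+\ba=2^*_s$). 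This yields $c^{*}\le\max_{\sigma\ge0}J_{f,g}\big((u_0,v_0)+\sigma(U_\eps,V_\eps)\big)$, and the goal is to show this maximum is $<c_0+\frac{s}{N}S_{\al,\ba}^{N/2s}$ for $\eps$ small. In the expansion, the pure--bubble part $\frac{\sigma^2}{2}\|(U_\eps,V_\eps)\|_{\phs}^2-\frac{\sigma^{2^*_s}}{2^*_s}\int_{\Rn}U_\eps^{\al}V_\eps^{\ba}\,{\rm d}x$ contributes exactly $\frac{s}{N}S_{\al,\ba}^{N/2s}$ at its maximum, the basepoint contributes $c_0=J_{f,g}(u_0,v_0)$, and one is left with the cross terms $\langle(u_0,v_0),\sigma(U_\eps,V_\eps)\rangle_{\phs}$, the interaction $\int_{\Rn}\big[(u_0+\sigma U_\eps)^{\al}(v_0+\sigma V_\eps)^{\ba}-u_0^{\al}v_0^{\ba}-\sigma^{2^*_s}U_\eps^{\al}V_\eps^{\ba}\big]\,{\rm d}x$, and $-\sigma\big(\langle f,U_\eps\rangle+\langle g,V_\eps\rangle\big)$. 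Exploiting the equation satisfied by $(u_0,v_0)$ cancels the leading order of the inner product against that of the interaction term and against $\sigma\big(\langle f,U_\eps\rangle+\langle g,V_\eps\rangle\big)$, and the remaining error must be estimated in a slightly larger Morrey space (this is where the product Morrey embeddings of the Appendix enter, since under the mere hypothesis $f,g\in(\dot H^s)'$ the solution $(u_0,v_0)$ is only controlled on such a scale), producing a strictly negative correction of order a positive power of $\eps$ that beats the residual $o(1)$. \emph{This energy estimate is the step I expect to be the main obstacle.}

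Granting it, for one fixed small $\eps$ we have $c_0<c^{*}<c_0+\frac{s}{N}S_{\al,\ba}^{N/2s}$, and since $c_0<0$ also $c^{*}<\frac{s}{N}S_{\al,\ba}^{N/2s}$. The mountain--pass theorem (whose proof rests on Ekeland's variational principle) then provides a Palais--Smale sequence $\{(u_n,v_n)\}$ for $J_{f,g}$ at level $c^{*}$; testing with the negative parts exactly as in Step~2 of Proposition~\ref{p:30-7-1} shows it may be taken nonnegative, hence it is also a nonnegative Palais--Smale sequence for $I_{f,g}$. Applying Proposition~\ref{PSP} as in Step~3 of Proposition~\ref{p:30-7-1}, up to a subsequence $(u_n,v_n)=(u_1,v_1)+\sum_{i=1}^k(\tilde u_i,\tilde v_i)^{r_n^i,x_n^i}+o(1)$ with $J_{f,g}'(u_1,v_1)=0$ and $c^{*}=J_{f,g}(u_1,v_1)+\sum_{i=1}^kJ_{0,0}(\tilde u_i,\tilde v_i)$, where each bubble satisfies $J_{0,0}(\tilde u_i,\tilde v_i)=\frac{s}{N}\|(\tilde u_i,\tilde v_i)\|_{\phs}^2\ge\frac{s}{N}S_{\al,\ba}^{N/2s}$. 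If $k\ge1$ this forces $J_{f,g}(u_1,v_1)<c_0$; but $J_{f,g}(0,0)=0>c_0$ rules out $(u_1,v_1)=(0,0)$, and a short analysis of the profile $t\mapsto J_{f,g}(tu_1,tv_1)$ (whose local minimum sits in $\overline{\Omega}_1$, hence has value $\ge c_0$) gives $J_{f,g}(u_1,v_1)\ge c_0$, a contradiction. Therefore $k=0$, $(u_n,v_n)\to(u_1,v_1)$ strongly, $J_{f,g}(u_1,v_1)=c^{*}>c_0$, so $(u_1,v_1)\neq(u_0,v_0)$; moreover $c^{*}>0$ forces $(u_1,v_1)\neq(0,0)$, and Remark~\ref{SR31} upgrades $(u_1,v_1)$ to a second positive weak solution of \eqref{MAT1}.
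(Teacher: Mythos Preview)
Your overall architecture --- mountain pass based at $(u_0,v_0)$, endpoint built from the extremal $(Bw,Cw)$, energy window $\big(c_0,\,c_0+\tfrac{s}{N}S_{\al,\ba}^{N/2s}\big)$, then Proposition~\ref{PSP} to recover compactness --- is exactly the paper's. The one substantive divergence is the energy estimate, which you flag as the ``main obstacle'' and propose to handle by $\eps\to0$ asymptotics and the Morrey embeddings of the Appendix. This is a misreading of where the Morrey tools enter: in the paper they are used \emph{only} inside the proof of Proposition~\ref{PSP} (Step~3), not in the present proposition. The energy estimate here is in fact elementary and holds for \emph{every} $\sigma>0$ and every $\eps>0$, with no asymptotics at all.

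Concretely: test \eqref{S32} with $(\sigma U_\eps,\sigma V_\eps)$ to replace the cross inner product, and you get the exact identity
\[
J_{f,g}\big(u_0+\sigma U_\eps,\,v_0+\sigma V_\eps\big)
=J_{f,g}(u_0,v_0)+J_{0,0}(\sigma U_\eps,\sigma V_\eps)
+\frac{1}{2^*_s}\int_{\Rn}\Big[u_0^\al v_0^\ba+(\sigma U_\eps)^\al(\sigma V_\eps)^\ba
+\al\,u_0^{\al-1}v_0^\ba\,\sigma U_\eps+\ba\,u_0^\al v_0^{\ba-1}\,\sigma V_\eps
-(u_0+\sigma U_\eps)^\al(v_0+\sigma V_\eps)^\ba\Big]\,{\rm d}x.
\]
Since $\al,\ba>1$ and $u_0,v_0,U_\eps,V_\eps>0$, the elementary pointwise inequality
\[
(a+b)^\al(c+d)^\ba>a^\al c^\ba+b^\al d^\ba+\al\,a^{\al-1}c^\ba b+\ba\,a^\al c^{\ba-1}d
\qquad(a,b,c,d>0)
\]
makes the bracket strictly negative a.e., so
\[
J_{f,g}\big(u_0+\sigma U_\eps,\,v_0+\sigma V_\eps\big)<J_{f,g}(u_0,v_0)+J_{0,0}(\sigma U_\eps,\sigma V_\eps)
\le J_{f,g}(u_0,v_0)+\frac{s}{N}S_{\al,\ba}^{N/2s}
\]
for all $\sigma>0$, the last step because $(U_\eps,V_\eps)$ is a ground state for \eqref{PSD2}. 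There is no ``residual $o(1)$'' to beat. (The paper parametrizes the path by the dilation $t\mapsto w(\cdot/t)$ rather than by the scalar $\sigma$, but the computation is the same.)

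Two smaller remarks. First, your profile argument at the end (``the local minimum of $t\mapsto J_{f,g}(tu_1,tv_1)$ sits in $\overline\Omega_1$, hence $J_{f,g}(u_1,v_1)\ge c_0$'') is morally right but the wording is off when $(u_1,v_1)\in\Omega_2$: there $t=1$ is the local \emph{maximum} of the profile; the point is that the profile still has a local minimum at some $t_*\in(0,1)$ with $(t_*u_1,t_*v_1)\in\Omega_1$ and is increasing on $[t_*,1]$, whence $J_{f,g}(u_1,v_1)\ge J_{f,g}(t_*u_1,t_*v_1)\ge c_0$. The paper itself is terse at this spot, merely invoking Claim~3 together with Proposition~\ref{PSP}. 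Second, you do not need the extra observation ``$c_0<0\Rightarrow c^*<\tfrac{s}{N}S_{\al,\ba}^{N/2s}$''; only the window $c_0<c^*<c_0+\tfrac{s}{N}S_{\al,\ba}^{N/2s}$ is used.
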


\begin{proof}
		Let $(u_0,v_0)$ be the critical point obtained in Proposition \ref{p:30-7-1} and $(Bw, Cw)$ (with $C=B\sqrt{\frac{\ba}{\al}}$) be a positive ground state solution of \eqref{PSD2} described as in Theorem~\ref{th:uni}. A standard computation yields that $I_{0,0}(Bw, Cw)=\displaystyle\frac{s}{N}S_{\al,\ba}^\frac{N}{2s}$. For $t>0$, define
		$$  w_t(x):=w\big(\frac{x}{t}\big), \quad \tilde u_t(x):=Bw_t(x), \quad \tilde v_t(x):=Cw_t(x).$$
		
\noindent{\bf Claim 1:} $(u_0+\tilde u_t,v_0+\tilde v_t)\in \Omega_2$ for $t>0$ large enough.
		
		Indeed, as $(u_0,v_0)$ and $(\tilde u_t, \tilde v_t)$ are positive and $\al, \ba>1$, using Young's inequality with $\eps>0$, we have
		\Bea
		\Psi(u_0+\tilde u_t,v_0+\tilde v_t)&=&\|(u_0+\tilde u_t)\|^2_{\dot{H}^s}+
		\|(v_0+\tilde v_t)\|^2_{\dot{H}^s}-(2^*_s-1)\int_{\Rn}|u_0+\tilde u_t|^\al|v_0+\tilde v_t|^\ba{\rm d}x\\
		&\leq&\|(u_0,v_0)\|^2_{\phs}+\|(\tilde u_t,\tilde v_t)\|^2_{\phs}+2\langle u_0, u_t\rangle_{\dot{H}^s}+2\langle v_0, v_t\rangle_{\dot{H}^s}\\
				&&\quad-(2^*_s-1)\left(\int_{\Rn}|u_0|^{\al}|v_0|^{\ba}\;{\rm d}x+\int_{\Rn}|\tilde u_t|^{\al}|\tilde v_t|^{\ba}\;{\rm d}x\right)\\
		&\leq&(1+\eps)\|(\tilde u_t,\tilde v_t)\|^2_{\phs}+(1+C_\eps)\|(u_0,v_0)\|^2_{\phs}\\
		&&-(2^*_s-1)\int_{\Rn}|u_0|^{\al}|v_0|^{\ba}{\rm d}x-(2^*_s-1)t^NB^{\al}C^{\ba}\int_{\Rn}|w|^{2_s^*}{\rm d}x\\
		&\leq&\bigg((1+\eps)(B^2+C^2)t^{N-2s}-(2^*_s-1)t^NB^{\al}C^{\ba}\bigg)\|w\|_{\dot{H}^s}^2\\
		&&+(1+C_\eps)\|(u_0,v_0)\|^2_{\phs}-(2^*_s-1)\int_{\Rn}|u_0|^{\al}|v_0|^{\ba}{\rm d}x\\
		&<& 0 \quad\mbox{for } t>0 \mbox{  large enough}.
		\Eea
		 Hence the claim follows.
		
		\vspace{2mm}
		
		\noindent{\bf Claim 2:} $J_{f,g}\bigg(u_0+\tilde u_t,v_+\tilde v_t\bigg)<J_{f,g}(u_0,v_0)+J_{0,0}(\tilde u_t,\tilde v_t)\,\;\forall\,\, t>0 $.
		
		Indeed, since $u_0,\, v_0, \, w_t,\, B>0$, taking $(\tilde u_t,\tilde v_t)$ as the test function for \eqref{S32} yields
		 \begin{align*} {\Big\langle} (u_0,v_0),\;(\tilde u_t,\tilde v_t){\Big\rangle}_{\phs} &=\frac{\al}{2_s^*}\int_{\Rn}u_0^{\al-1}v_0^{\ba}\tilde u_t\, {\rm d}x+\frac{\ba}{2_s^*}\int_{\Rn}u_0^{\al}v_0^{\ba-1}\tilde v_t\;{\rm d}x\\
		 &\qquad\qquad+\prescript{}{(\dot{H}^s)'}{\langle}f,\tilde u_t{\rangle}_{\dot{H}^s}+\prescript{}{(\dot{H}^s)'}\langle g,\tilde v_t\rangle_{\dot{H}^s}.\end{align*}
		Consequently, using the above expression, we obtain
		\begin{align*}
		J_{f,g}\big(u_0+\tilde u_t,v_0+\tilde v_t\big)
		&= \frac{1}{2}\|(u_0,v_0)\|_{\phs}^2+\frac{1}{2}\|(\tilde u_t, \tilde v_t)\|_{\phs}^2+ {\Big\langle} (u_0,v_0),\;(\tilde u_t,\tilde v_t){\Big\rangle}_{\phs} \\
		&\quad-\frac{1}{2^*_s}\int_{\Rn}(u_0+\tilde u_t)^{\al}(v_0+\tilde v_t)^{\ba}\;{\rm d}x-\prescript{}{(\dot{H}^s)'}{\langle}f,u_0+\tilde u_t{\rangle}_{\dot{H}^s}-\prescript{}{(\dot{H}^s)'}{\langle}g,v_0+\tilde v_t{\rangle}_{\dot{H}^s}\\
		&=J_{f,g}(u_0,v_0)+J_{0,0}(\tilde u_t,\tilde v_t) +\frac{1}{2_s^*}\int_{\Rn}u_0^{\al}v_0^{\ba}\;{\rm d}x+\frac{1}{2_s^*}\int_{\Rn}|\tilde u_t|^{\al}|\tilde v_t|^{\ba}{\rm d}x\\
		&\quad+\frac{\al}{2^*_s}\int_{\Rn}u_0^{\al-1}v_0^{\ba}\tilde u_t\;{\rm d}x
		+\frac{\ba}{2^*_s}\int_{\Rn}u_0^{\al}v_0^{\ba-1}\tilde v_t\;{\rm d}x \\
		&\quad-\frac{1}{2^*_s}\int_{\Rn}(u_0+\tilde u_t)^{\al}(v_0+\tilde v_t)^{\ba}\;{\rm d}x\\
		&\leq J_{f,g}(u_0,v_0)+J_{0,0}(\tilde u_t,\tilde v_t)  +\frac{1}{2^*_s}\int_{\Rn}\bigg[u_0^{\al}v_0^{\ba}+\tilde u_t^{\al}\tilde v_t^{\ba}\\
		&\qquad\qquad+\al u_0^{\al-1}v_0^{\ba}\tilde u_t+\ba u_0^{\al}v_0^{\ba-1}\tilde v_t-(u_0+\tilde u_t)^{\al}(v_0+\tilde v_t)^{\ba}\bigg]{\rm d}x\\
		&< J_{f,g}(u_0,v_0)+J_{0,0}(\tilde u_t,\tilde v_t) .
		\end{align*}
		Hence the Claim follows.
		
		\vspace{2mm}
		
		Using the definition of $\tilde u_t$ and  $\tilde v_t$, it immediately follows
		\be\lab{1-8-2}
		\lim_{t\to\infty}J_{0,0}\big(\tilde u_t,\tilde v_t\big)=-\infty,
				\ee
and
		$$\sup_{t>0} J_{0,0}\big(\tilde u_t,\tilde v_t\big)=J_{0,0}(\tilde u_{t'},\tilde v_{t'}), \quad\text{where}\quad t'=\bigg(\frac{B^2+C^2}{B^\al C^\ba}\bigg)^\frac{1}{2s}.$$
Therefore,	 doing a straight forward computation and using Lemma~\ref{l:S}, we get that
$$\sup_{t>0} J_{0,0}\big(\tilde u_t,\tilde v_t\big)=\frac{s}{N}\frac{(B^2+C^2)^\frac{N}{2s}}{(B^\al C^\ba)^\frac{N-2s}{2s}}S^\frac{N}{2s}=\frac{s}{N}S_{\al,\ba}^\frac{N}{2s}.$$
Combining this with Claim 2 and \eqref{1-8-2} yields
		\be\begin{split}\lab{1-8-6}
			J_{f,g}(u_0+\tilde u_t,v_0+\tilde v_t)<J_{f,g}(u_0,v_0)+\frac{s}{N}S_{\al,\ba}^{\frac{N}{2s}}  \quad\forall\, t>0 \\
			\mbox{ and }\qquad J_{f,g}(u_0+\tilde u_t,v_0+\tilde v_t)<J_{f,g}(u_0,v_0) \quad\mbox{for } t \mbox{ large enough}.
		\end{split}
		\ee
		Fix $t_0>0$ large enough such that \eqref{1-8-6} and Claim 1 are satisfied.
		
		Next, we set
		$$\eta:=\inf_{\ga\in\Ga}\max_{r\in[0,1]} J_{f,g}\big(\ga(r)\big),$$
		where $$\Ga:=\bigg\{\ga\in C\big([0,1],\, \Hs\times\Hs\big) :  \ga(0)=(u_0,v_0),\quad\ga(1)= (u_0+\tilde u_{t_0}, v_0+\tilde v_{t_0})\bigg\}.$$
As $(u_0,v_0)\in \Omega_1$ and $(u_0+\tilde u_{t_0},v_0+\tilde v_{t_0})\in \Omega_2$, for every $\ga\in \Ga$, there exists $r_\ga\in(0,1)$
		such that $\ga(r_\ga)\in \Omega$. Therefore,
		$$\max_{r\in[0,1]} J_{f,g}(\ga(r))\geq J_{f,g}\big(\ga(r_\ga)\big)\geq \inf_{\Omega}J_{f,g}(u,v)=c_1.$$
		Thus, $\eta\geq c_1>c_0=J_{f,g}(u_0,v_0)$.
		Here in the last inequality we have used Lemma \ref{SL32}.
		
		\vspace{2mm}

		\noindent{\bf Claim 3:} $\displaystyle J_{f,g}(u_0,v_0)<  \eta<J_{f,g}(u_0,v_0)+\frac{s}{N}S_{\al,\ba}^{\frac{N}{2s}}$.
		
Since $\lim_{t\to 0}\|w_t\|_{\Hs}=0$, we also have $\lim_{t\to 0}\|(\tilde u_t, \tilde v_t)\|_{\phs}=0$. Thus, if we define
		$\tilde \ga(r):=(u_0, v_0)+ (\tilde u_{rt_0}, \tilde u_{rt_0})$, then
		$\lim_{r\to 0}\|\tilde \ga(r)-(u_0, v_0)\|_{\phs}=0$. Consequently, $\tilde \ga\in \Ga$. Therefore, using \eqref{1-8-6}, we obtain

		$$\eta\leq \max_{r\in[0,1]}J_{f,g}(\tilde \ga(r))=\max_{r\in[0,1]}J_{f,g}\left(u_0+\tilde u_{rt_0}, v_0+\tilde v_{rt_0}\right)<J_{a,f}(u_0,v_0)+\frac{s}{N}S_{\al,\ba}^{\frac{N}{2s}}.$$
		Hence Claim 3 follows.

		Using Ekeland's variational principle, there exists a $(PS)$ sequence $\{(u_n,v_n)\}$ for $J_{f,g}$ at level $\eta$. Arguing as before we see that $\{(u_n,v_n)\}$ is a bounded sequence. Further,
since Claim 3 holds, from 	Proposition~\ref{PSP} we conclude that $(u_n,v_n)\to (u_1,v_1)$, for some $(u_1,v_1)\in \Hs\times\Hs$ such that $J_{f,g}'(u_1,v_1)=0$ and $J_{f,g}(u_1,v_1)=\eta$. On the other hand, as
		$J_{f,g}(u_0,v_0)<\eta$, we conclude $(u_0,v_0)\neq (u_1,v_1)$.
		
		$J_{f,g}'(u_1,v_1)=0\Longrightarrow (u_1,v_1)$ is a weak solution to \eqref{S32}. Combining this with Remark~\ref{SR31}, we complete the proof of the proposition.
	\end{proof}

	\begin{lemma}\lab{l:J1.3}
		Let $C_0$ be as defined in Theorem~\ref{th:ex-f}. If $\max\{\|f\|_{(\dot{H}^s)'},\;\|g\|_{(\dot{H}^s)'}\}<C_0S_{\al,\ba}^\frac{N}{4s}$, then \eqref{J1.3} holds.
	\end{lemma}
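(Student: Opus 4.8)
The plan is to deduce \eqref{J1.3} from the smallness hypothesis by a short quantitative estimate; no compactness is involved. First I would fix an arbitrary admissible pair, i.e. $(u,v)\in\dot{H}^s(\Rn)\times\dot{H}^s(\Rn)$ with $\int_{\Rn}|u|^{\al}|v|^{\ba}\,{\rm d}x=1$, and record two bounds. From the very definition of $S_{\al,\ba}$ in \eqref{17-7-5} one has $\|(u,v)\|_{\phs}^2\ge S_{\al,\ba}\big(\int_{\Rn}|u|^{\al}|v|^{\ba}\,{\rm d}x\big)^{2/2^*_s}=S_{\al,\ba}$, so $\|(u,v)\|_{\phs}\ge S_{\al,\ba}^{1/2}$ on the whole constraint set. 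On the other hand, by the dual pairing inequality and Cauchy--Schwarz in $\R^2$,
\begin{equation*}
\prescript{}{(\dot{H}^s)'}{\langle}f,u{\rangle}_{\dot{H}^s}+\prescript{}{(\dot{H}^s)'}{\langle}g,v{\rangle}_{\dot{H}^s}\le \|f\|_{(\dot{H}^s)'}\|u\|_{\dot{H}^s}+\|g\|_{(\dot{H}^s)'}\|v\|_{\dot{H}^s}\le \big(\|f\|_{(\dot{H}^s)'}^2+\|g\|_{(\dot{H}^s)'}^2\big)^{1/2}\,\|(u,v)\|_{\phs}.
\end{equation*}

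Next I would combine these. Writing $t:=\|(u,v)\|_{\phs}$ and $m:=\big(\|f\|_{(\dot{H}^s)'}^2+\|g\|_{(\dot{H}^s)'}^2\big)^{1/2}$, and using the elementary identity $\tfrac{N+2s}{2s}-1=\tfrac{N}{2s}$, the quantity inside the infimum of \eqref{J1.3} is bounded below by
\begin{equation*}
C_0\,t^{\frac{N+2s}{2s}}-m\,t\;=\;t\Big(C_0\,t^{\frac{N}{2s}}-m\Big).
\end{equation*}
The factor $t\mapsto C_0 t^{N/(2s)}-m$ is nondecreasing, so on the range $t\ge S_{\al,\ba}^{1/2}$ the right-hand side is at least $S_{\al,\ba}^{1/2}\big(C_0 S_{\al,\ba}^{N/(4s)}-m\big)$, a bound uniform over the admissible pairs; note that $S_{\al,\ba}$ enters with exponent $\big(\tfrac{N+2s}{2s}-1\big)\cdot\tfrac12=\tfrac{N}{4s}$, which is exactly the power in the threshold. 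It remains to verify $m<C_0 S_{\al,\ba}^{N/(4s)}$; since $m\le\sqrt2\,\max\{\|f\|_{(\dot{H}^s)'},\|g\|_{(\dot{H}^s)'}\}$, this follows from the hypothesis $\max\{\|f\|_{(\dot{H}^s)'},\|g\|_{(\dot{H}^s)'}\}<C_0 S_{\al,\ba}^{N/(4s)}$ up to the dimensional constant relating the two norms (equivalently, one may simply phrase the hypothesis in terms of $\big(\|f\|_{(\dot{H}^s)'}^2+\|g\|_{(\dot{H}^s)'}^2\big)^{1/2}$). Once $m<C_0 S_{\al,\ba}^{N/(4s)}$ is in hand, the displayed lower bound is a strictly positive constant, hence \eqref{J1.3} holds.

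There is no genuine obstacle: the lemma is merely the bridge from the hypotheses of Theorem~\ref{th:ex-f} to the structural condition \eqref{J1.3} used in Propositions~\ref{p:30-7-1} and \ref{p:31-7-2}, and the argument is a two-line estimate. The only items deserving attention are the exponent bookkeeping that turns the calibrated constant $C_0=\tfrac{4s}{N+2s}(2^*_s-1)^{-\frac{N-2s}{4s}}$ (calibrated as in \eqref{30-7-4}) into the threshold $C_0 S_{\al,\ba}^{N/(4s)}$, and keeping the constant linking $\big(\|f\|_{(\dot{H}^s)'}^2+\|g\|_{(\dot{H}^s)'}^2\big)^{1/2}$ and $\max\{\|f\|_{(\dot{H}^s)'},\|g\|_{(\dot{H}^s)'}\}$ consistent throughout.
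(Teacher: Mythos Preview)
Your argument is more direct than the paper's. The paper first reformulates \eqref{J1.3} as an equivalent condition on the Nehari-type set $\Omega=\{\Psi=0\}$ via scaling (this is the chain \eqref{S312}), then uses the constraint $\|(u,v)\|^2=(2^*_s-1)\int|u|^\al|v|^\ba$ together with the definition of $S_{\al,\ba}$ to bound $\|(u,v)\|$ from below on $\Omega$. You bypass $\Omega$ entirely and work directly on the constraint $\int|u|^\al|v|^\ba=1$, using only $\|(u,v)\|\ge S_{\al,\ba}^{1/2}$ and the monotonicity of $t\mapsto C_0 t^{N/(2s)}-m$. Your route is shorter and avoids the reformulation, while the paper's route makes explicit the link between \eqref{J1.3} and the manifold $\Omega$ that is used elsewhere in Section~4.

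Both arguments, however, share the same weak point that you honestly flag: the passage from the hypothesis $\max\{\|f\|_{(\dot{H}^s)'},\|g\|_{(\dot{H}^s)'}\}<C_0 S_{\al,\ba}^{N/(4s)}$ to the needed bound on the linear term loses a constant. You need $(\|f\|^2+\|g\|^2)^{1/2}<C_0 S_{\al,\ba}^{N/(4s)}$, which only follows from the $\max$ hypothesis with an extra $\sqrt{2}$; the paper, in the line just after Assertion~1, asserts $\|f\|_{(\dot{H}^s)'}+\|g\|_{(\dot{H}^s)'}<C_0 S_{\al,\ba}^{N/(4s)}-\varepsilon$, which is even stronger and likewise does not follow from the stated $\max$ bound. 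So your caveat about ``the constant linking $(\|f\|^2+\|g\|^2)^{1/2}$ and $\max$'' is not a defect relative to the paper---it is the same discrepancy, and you are right that either the hypothesis of the lemma should be phrased in terms of $(\|f\|^2+\|g\|^2)^{1/2}$ (or $\|f\|+\|g\|$), or a harmless factor must be absorbed into $C_0$.
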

	\begin{proof}
	{\bf Assertion 1}: $$\frac{4s}{N+2s}\|(u,v)\|_{\phs} \geq C_0S_{\al,\ba}^{\frac{N}{4s}} \quad\forall\,\, (u, v)\in \Omega.$$
To see this, we fix $(u,v)\in\Omega$. Therefore, using the definition of $S_{\al,\ba}$ we have	
\bea
	\|(u,v)\|_{\phs}&\geq& S_{\al,\ba}^{1/2}\left(\int_{\Rn}|u|^{\al}|v|^{\ba}\;{\rm d}x\right)^{1/2_s^*}\no\\
	 \implies\|(u,v)\|_{\phs}&\geq& S_{\al,\ba}^{{1/2}} \frac{\|(u,v)\|_{\phs}^{2/2_s^*}}{\left(2^*_s-1\right)^{{1/2_s^*}}}.\no
	\eea
	From here, using the definition of $C_0$, the assertion follows.
	
Note that by the given hypothesis, there exists $\varepsilon>0$ such that
		\be
		\|f\|_{(\dot{H}^s)'}+\|g\|_{(\dot{H}^s)'}< C_0 S_{\al,\ba}^{\frac{N}{4s}}-\varepsilon.\no
		\ee
		Combining this with the above Assertion~1, for all $(u,v)\in\Omega$, it holds
		\begin{align*}
			\prescript{}{(\dot{H}^s)'}{\langle}f,u{\rangle}_{\dot{H}^s} + \prescript{}{(\dot{H}^s)'}{\langle}g,v{\rangle}_{\dot{H}^s} &\leq \left(\|f\|_{(\dot{H}^s)'}+\|g\|_{(\dot{H}^s)'}\right)\|(u,v)\|_{\phs}\\
			&< \left(C_0S_{\al,\ba}^{\tfrac{N}{4s}}-\varepsilon\right)\|(u,v)\|_{\phs}\\
			&\leq \frac{4s}{N+2s}\|(u,v)\|_{\phs}^2-\varepsilon\|(u,v)\|_{\phs}.
		\end{align*}
Consequently,
		$$	\inf_{(u,v)\in\Omega}\[\frac{4s}{N+2s}\|(u,v)\|_{\phs}^2-\prescript{}{(\dot{H}^s)'}{\langle}f,u{\rangle}_{\dot{H}^s} - \prescript{}{(\dot{H}^s)'}{\langle}g,v{\rangle}_{\dot{H}^s}\]> \varepsilon \inf_{(u,v)\in\Omega}\|(u,v)\|_{\phs}.
		$$
		Since  $\|(u,v)\|_{\phs}$ is bounded away from $0$ on $\Omega$, the above expression implies that
		\be\label{S311}
		\inf_{(u,v)\in\Omega}\[\frac{4s}{N+2s}\|(u,v)\|_{\phs}^2
		-\prescript{}{(\dot{H}^s)'}{\langle}f,u{\rangle}_{\dot{H}^s} - \prescript{}{(\dot{H}^s)'}{\langle}g,v{\rangle}_{\dot{H}^s}\]>0.
		\ee
		On the other hand,
		\begin{align}\label{S312}
			\eqref{J1.3}&\iff C_0\frac{\|(u,v)\|_{\phs}^{\frac{N+2s}{2s}}}
			{\bigg(\displaystyle\int_{\Rn}|u|^{\al}|v|^{\ba}\;{\rm d}x\bigg)^{\frac{N-2s}{4s}}}
			-\prescript{}{(\dot{H}^s)'}{\langle}f,u{\rangle}_{\dot{H}^s} - \prescript{}{(\dot{H}^s)'}{\langle}g,v{\rangle}_{\dot{H}^s}>0\no\\
			 &\qquad\qquad\qquad\qquad\qquad\qquad\qquad\qquad \mbox{ for }\int_{\Rn}|u|^{\al}|v|^{\ba}\;{\rm d}x=1\no\\
			&\iff  C_0\frac{\|(u,v)\|_{\phs}^{\frac{N+2s}{2s}}}
			{\bigg(\displaystyle\int_{\Rn}|u|^{\al}|v|^{\ba}\;{\rm d}x\bigg)^{\frac{N-2s}{4s}}}
			-\prescript{}{(\dot{H}^s)'}{\langle}f,u{\rangle}_{\dot{H}^s} - \prescript{}{(\dot{H}^s)'}{\langle}g,v{\rangle}_{\dot{H}^s}>0
			\quad\mbox{for }(u,v)\in\Omega\no\\
			&\iff \frac{4s}{N+2s}\|(u,v)\|_{\phs}^2	-\prescript{}{(\dot{H}^s)'}{\langle}f,u{\rangle}_{\dot{H}^s} - \prescript{}{(\dot{H}^s)'}{\langle}g,v{\rangle}_{\dot{H}^s}>0
			\quad\mbox{for }(u,v)\in\Omega.
		\end{align}
		Clearly, \eqref{S311} ensures that the RHS of \eqref{S312} holds. The lemma now follows.
\end{proof}

{\bf End of Proof of Theorem~\ref{th:ex-f}} Combining Propositions~\ref{p:30-7-1} and \ref{p:31-7-2} with Lemmas~\ref{l:J1.3} and \ref{SL32}, we obtain two positive solutions of Theorem~\ref{th:ex-f}. The last assertion of the theorem follows as proved in \cite[Theorem 1.1]{BCP}.

\appendix
\section{Product of Morrey spaces}
\setcounter{equation}{0}

First we recall the definition of the homogeneous Morrey spaces $L^{r,\ga}(\Rn)$, introduced by Morrey as a
refinement of the usual Lebesgue spaces. A measurable function $u:\Rn\to\R$
belongs to the Morrey space $L^{r,\ga}(\Rn)$, with $r\in[1,\infty)$ and $\ga\in[0,N]$ if and only if
\be\lab{12-13-1}
\|u\|_{L^{r,\ga}(\Rn)}^r:=\sup_{R>0,\, x\in\Rn} R^\ga\fint_{B(x,R)}|u|^r dy<\infty.
\ee
Note that if $\ga=N$ then $L^{r,N}(\Rn)$ coincides with the usual Lebesgue space $L^r(\Rn)$ for any $r\geq 1$ and similarly $L^{r,0}(\Rn)$ coincides with $L^\infty(\Rn)$. Also we observe that $L^{r,\ga}(\Rn)$ experiences same translation and dilation invariance as in $L^{2^*_s}(\Rn)$ and therefore of $\Hs$ if $\frac{\ga}{r}=\frac{N-2s}{2}$. Let $(u)^{x_0,r}$ be the function defined by \eqref{12-13-3}. By change of variable
formula, one can see that the following equality holds
$$\|(u)^{x_0,r}\|_{L^{r, \frac{N-2s}{2}r}}=\|u\|_{L^{r, \frac{N-2s}{2}r}},$$
for any $r\in[1, 2^*_s]$. We recall that there
exists a constant $C=C(N,s)$ such that
\be\lab{12-13-4}\|u\|_{L^{r, r(N-2s)/2}}\leq C\|u\|_{L^{2^*_s}} \quad\mbox{for all } u\in L^{2^*_s}(\Rn),\ee
see \cite[Theorem 1]{PS}(also see \cite[(A.2)]{BP}). For further discussion on Morrey spaces, we refer the reader to \cite{PS}. Next we define the product space 
$L^{2, N-2s}(\Rn)\times L^{2, N-2s}(\Rn)$ in the standard way with
$$\|(u,v)\|_{L^{2, N-2s}\times L^{2, N-2s}}:=\big(\|u\|^2_{L^{2, N-2s}}+\|v\|^2_{L^{2, N-2s}}\big)^\frac{1}{2}.$$
Therefore, using Sobolev inequality and \eqref{12-13-4}, it follows that
\be\lab{3-9-1}
\dot{H}^s\times\dot{H}^s\hookrightarrow L^{2^*_s}\times L^{2^*_s}\hookrightarrow L^{2,N-2s}\times L^{2,N-2s},
\ee
where the embedding are continuous.

\begin{lemma}\lab{l:12-13-1}
For any $0<s<N/2$  there exists a constant $C=C(N,s)$ such that, for any $2/2^*_s\leq \theta<1$ and for any $1\leq r<2^*_s$
$$\|(u,v)\|_{L^{2^*_s}\times L^{2^*_s}}\leq C\|(u,v)\|_{\dot{H}^s\times \dot{H}^s}^\theta\|(u,v)\|^{1-\theta}_{L^{2, (N-2s)}\times L^{2, (N-2s)}}$$
for all  $(u, v)\in\Hs\times\Hs$.
\end{lemma}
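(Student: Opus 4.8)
The strategy is to reduce the vectorial inequality to its scalar version and then recombine the two components by an elementary discrete Hölder inequality. First I would isolate the scalar estimate: for every $w\in\Hs$ and every $\theta\in[2/2^*_s,1)$ one has
\be\lab{A-scalar}
\|w\|_{L^{2^*_s}}\le C\,\|w\|_{\dot H^s}^{\theta}\,\|w\|_{L^{2,N-2s}}^{1-\theta},
\ee
with $C=C(N,s)$ independent of $\theta$. Granting \eqref{A-scalar}, applying it to $u$ and to $v$, squaring and adding gives
$$\|(u,v)\|_{L^{2^*_s}\times L^{2^*_s}}^2\le C^2\Big(\|u\|_{\dot H^s}^{2\theta}\|u\|_{L^{2,N-2s}}^{2(1-\theta)}+\|v\|_{\dot H^s}^{2\theta}\|v\|_{L^{2,N-2s}}^{2(1-\theta)}\Big).$$
Since $0<\theta<1$, Hölder's inequality on the two-point set (with conjugate exponents $1/\theta$ and $1/(1-\theta)$) yields $a_1^{\theta}b_1^{1-\theta}+a_2^{\theta}b_2^{1-\theta}\le(a_1+a_2)^{\theta}(b_1+b_2)^{1-\theta}$ for all $a_i,b_i\ge0$; taking $a_1=\|u\|_{\dot H^s}^2$, $a_2=\|v\|_{\dot H^s}^2$, $b_1=\|u\|_{L^{2,N-2s}}^2$, $b_2=\|v\|_{L^{2,N-2s}}^2$ bounds the right-hand side by $C^2\|(u,v)\|_{\phs}^{2\theta}\|(u,v)\|_{L^{2,N-2s}\times L^{2,N-2s}}^{2(1-\theta)}$, and taking square roots gives the lemma with the same constant $C$. (All the Morrey norms involved are finite for $(u,v)\in\Hs\times\Hs$ by \eqref{3-9-1}.)

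It remains to prove \eqref{A-scalar}. I would first dispose of the endpoint $\theta=2/2^*_s$: the refined (Morrey-type) fractional Sobolev inequality
$$\|w\|_{L^{2^*_s}}\le C\,\|w\|_{\dot H^s}^{2/2^*_s}\,\|w\|_{L^{2,N-2s}}^{1-2/2^*_s}$$
is part of the fractional Morrey-space machinery already invoked in the paper; I would cite it from \cite{PS} (see also \cite{BP}). For a general $\theta\in(2/2^*_s,1)$, \eqref{A-scalar} follows by monotonicity in $\theta$: write $\|w\|_{L^{2,N-2s}}^{1-2/2^*_s}=\|w\|_{L^{2,N-2s}}^{1-\theta}\,\|w\|_{L^{2,N-2s}}^{\theta-2/2^*_s}$ (legitimate since $\theta-2/2^*_s\ge0$; if $\|w\|_{L^{2,N-2s}}=0$ then $w\equiv0$ and \eqref{A-scalar} is trivial), and estimate the last factor by the continuous embedding $\Hs\hookrightarrow L^{2,N-2s}(\Rn)$ recorded in \eqref{12-13-4}--\eqref{3-9-1}, namely $\|w\|_{L^{2,N-2s}}\le C\|w\|_{\dot H^s}$. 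Substituting turns the endpoint estimate into $\|w\|_{L^{2^*_s}}\le C\|w\|_{\dot H^s}^{\theta}\|w\|_{L^{2,N-2s}}^{1-\theta}$, and the prefactor stays bounded because the extra power $\theta-2/2^*_s$ lies in the fixed interval $[0,1-2/2^*_s)$, so $C$ can be chosen uniformly in $\theta$. Note that the parameter $r$ in the statement does not actually enter here: only the pair $(2,N-2s)$ is used, and the analogue with $L^{r,r(N-2s)/2}$ would be obtained in exactly the same way.

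The only non-elementary ingredient, and hence the main obstacle, is the endpoint refined Sobolev inequality ($\theta=2/2^*_s$); the reduction to scalars, the split into components, the discrete Hölder step, and the monotonicity in $\theta$ are all routine. Since the paper already relies on the fractional Morrey estimates of \cite{PS} --- in particular \eqref{12-13-4} and the weak-convergence lemma used in Proposition~\ref{PSP} --- I would simply quote the corresponding refined inequality from there rather than reprove it; if a self-contained proof were wanted it could be produced by a standard Littlewood--Paley dyadic decomposition argument.
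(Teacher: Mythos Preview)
Your proof is correct and follows the same overall strategy as the paper's: invoke the scalar refined Sobolev inequality from \cite[Theorem~1]{PS} for each component and then recombine. Two minor differences are worth noting. First, \cite[Theorem~1]{PS} already gives the scalar estimate $\|w\|_{L^{2^*_s}}\le C\|w\|_{\dot H^s}^{\theta}\|w\|_{L^{2,N-2s}}^{1-\theta}$ for the full range $\theta\in[2/2^*_s,1)$, so your monotonicity-in-$\theta$ detour (reducing general $\theta$ to the endpoint via the embedding $\dot H^s\hookrightarrow L^{2,N-2s}$) is correct but unnecessary; the paper simply cites \cite{PS} once. Second, your recombination via the discrete H\"older inequality on a two-point set is slightly sharper than the paper's: the paper instead uses the cruder chain $(a^2+b^2)^{1/2}\le a+b$ together with the componentwise bounds $\|u\|_{\dot H^s}\le\|(u,v)\|_{\dot H^s\times\dot H^s}$ and $\|u\|_{L^{2,N-2s}}\le\|(u,v)\|_{L^{2,N-2s}\times L^{2,N-2s}}$, at the cost of an extra factor $2$ in the final constant. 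Both routes are entirely elementary, and the value of the constant is irrelevant for the application in Proposition~\ref{PSP}. Your observation that the parameter $r$ in the statement plays no role is also correct; it does not appear in the paper's proof either.
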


\begin{proof}
Using \cite[Theorem 1]{PS},
\Bea
\|(u,v)\|_{L^{2^*_s}\times L^{2^*_s}}&=&\big(\|u\|^2_{L^{2^*_s}}+\|v\|^2_{L^{2^*_s}}\big)^\frac{1}{2}\no\\
&\leq&\|u\|_{L^{2^*_s}}+\|v\|_{L^{2^*_s}}\no\\
&\leq&C\big[\|u\|_{\dot{H}^s}^\theta\|u\|^{1-\theta}_{L^{2, N-2s}}+
\|v\|_{\dot{H}^s}^\theta\|v\|^{1-\theta}_{L^{2, N-2s}}\big]\\
&\leq&C\big[\|(u,v)\|_{\dot{H}^s\times \dot{H}^s}^\theta\|u\|^{1-\theta}_{L^{2, N-2s}}+
\|(u,v)\|_{\dot{H}^s\times \dot{H}^s}^\theta\|v\|^{1-\theta}_{L^{2, N-2s}}\big]\\
&\leq&C\|(u,v)\|_{\dot{H}^s\times \dot{H}^s}^\theta\big[\|u\|^{1-\theta}_{L^{2, N-2s}}+\|v\|^{1-\theta}_{L^{2,N-2s}}\big]\\
&\leq&C\|(u,v)\|_{\dot{H}^s\times \dot{H}^s}^\theta\big[\|(u,v)\|^{1-\theta}_{L^{2, N-2s}\times L^{2, N-2s}}+\|(u,v)\|^{1-\theta}_{L^{2,N-2s}\times L^{2, N-2s}}\big]\\
&\leq&2C\|(u,v)\|_{\dot{H}^s\times \dot{H}^s}^\theta\|(u,v)\|^{1-\theta}_{L^{2, (N-2s)}\times L^{2, (N-2s)}}.
\Eea
\end{proof}

{\bf Acknowledgement}:  The research of M.~Bhakta is partially supported by the  SERB MATRICS grant MTR/2017/000168 and SERB WEA grant WEA/2020/000005.

S. Chakraborty is partially supported by NBHM grant 0203/11/2017/RD-II.

O. H. Miyagaki has received grants from CNPq/Brazil grant n$^{\underline{{\rm o}}}$ 307061/2018-3 and  FAPESP/ Brazil grant n$^{\underline{{\rm o}}}$ 2019/24901-3.

P. Pucci is a member of the {\em Gruppo Nazionale per
l'Analisi Ma\-te\-ma\-ti\-ca, la Probabilit\`a e le loro Applicazioni}
(GNAMPA) of the {\em Istituto Nazionale di Alta Matematica} (INdAM)
and partly supported by the INdAM -- GNAMPA Project
{\em Equazioni alle derivate parziali: problemi e mo\-del\-li} (Prot\_U-UFMBAZ-2020-000761). P. Pucci was also partly supported by of the {\em Fondo Ricerca di Base di Ateneo --
Eser\-ci\-zio 2017--2019} of the University of Perugia, named {\em PDEs and Nonlinear Analysis}.

\medskip

\end{document}